\documentclass[11pt,amsfonts, epsfig]{amsart}
\usepackage{amsmath, amscd, amssymb}
\usepackage{graphpap, color}
\usepackage{mathrsfs}
\usepackage{pstricks}
\usepackage{color}
\usepackage{cancel}
\usepackage[mathscr]{eucal}
\usepackage{pstricks}
\usepackage{color}
\usepackage{cancel}
\usepackage{verbatim}
\usepackage{latexsym}
\usepackage[all]{xy}


%

\topmargin 0.0in
\setlength{\textheight}{13cm}
\setlength{\evensidemargin}{\oddsidemargin}

\addtolength{\oddsidemargin}{-.2in}
\addtolength{\evensidemargin}{-.2in}
\addtolength{\textwidth}{.95cm}
\textheight 7.8in

\usepackage{lipsum}
\makeatletter
\g@addto@macro{\endabstract}{\@setabstract}
\newcommand{\authorfootnotes}{\renewcommand\thefootnote{\@fnsymbol\c@footnote}}%
\makeatother


\usepackage{upgreek}

 \usepackage{wasysym}
\usepackage{ esint }

\def\vdim{\mathrm{vir}.\dim}

\def\virt{^{\vir}}
\def\virtloc{\virt_\loc}

\def\Tdec{^{T,\mathrm{dec}}}

\numberwithin{equation}{section}

\def\gm{\GG_m}

\def\sO{{\mathscr O}}
\def\sC{{\mathscr C}}

\def\sN{{\mathscr N}}
\def\sL{{\mathscr L}}

\def\sO{\mathscr{O}}

\def\sE{\mathscr{E}}

\def\sA{\mathscr{A}}

\def\sR{\mathscr{R}}

\newcommand{\CC}{\mathbb{C}}

\newcommand{\EE}{\mathbb{E}}

\newcommand{\PP}{\mathbb{P}}
\newcommand{\QQ}{\mathbb{Q}}

\newcommand{\ZZ}{\mathbb{Z}}
\newcommand{\GG}{\mathbb{G}}

\newcommand{\TT}{\mathbb{T}}

\def\sK{{\mathscr K}}


\newcommand{\bL}{\mathbf{L}}

\def\redd{{\mathrm{red}}}

\def\Gm{{\bG_m}}
\def\upmo{^{-1}}

\newcommand{\vir}{ {\mathrm{vir}} }
\newcommand{\ev}{ \mathrm{ev} }
\newcommand{\vg}{\vec{g}}

\newcommand{\vS}{\vec{S}}


\def\Wfix{\cW_\Ga} 
\def\fixW{\cW^{\,\Gm}}

\newcommand{\cal}{\mathcal}

\def\cC{{\cal C}}
\def\cD{{\cal D}}

\def\cF{{\cal F}}

\def\cL{{\cal L}}
\def\cM{{\cal M}}
\def\cN{{\cal N}}
\def\cO{{\cal O}}
\def\cP{{\cal P}}

\def\cR{{\cal R}}

\def\cU{{\cal U}}
\def\cV{{\cal V}}
\def\cW{{\cal W}}

\def\cS{{\cal S}}
\def\cX{X} 
\def\cY{{\cal Y}}

\def\cZ{{\cal Z}}


\def\fB{\mathfrak{B}}
\def\fC{\mathfrak{C}}

\def\ft{\mathfrak{t}}



\def\v1{{\vec{1}}}

\def\sP{{\mathscr P}}


\def\fl{{\mathrm{fl}}}



\def\mapright#1{\,\smash{\mathop{\lra}\limits^{#1}}\,}
\def\toright#1{\,\smash{\mathop{\to}\limits^{#1}}\,}

\def\cDg{\cD_{\ga}}



\newcommand{\vd}{\vec{d}}


\def\dual{^{\vee}}

\def\sta{^\ast}
\def\st{^{\mathrm{st}}}
\def\virt{^{\mathrm{vir}}}
\def\upmo{^{-1}}
\def\sta{^{\ast}}

\def\sta{^*}


\def\lra{\longrightarrow}

\def\lsta{_{\ast}}

\def\cDggg{\cD_{\ga,\nu}}


\newcommand{\Del}{\Delta}
\newcommand{\Si}{\Sigma}
\newcommand{\Ga}{\Gamma}

\newcommand{\ep}{\epsilon}
\newcommand{\lam}{\lambda}
\newcommand{\si}{\sigma}


\def\lrga{_{(\ga)}}

\def\begeq{\begin{equation}}
\def\endeq{\end{equation}}
\def\and{\quad{\rm and}\quad}
\def\bl{\bigl(}
\def\br{\bigr)}
\def\defeq{:=}

\def\sub{\subset}
\def\Ao{{\mathbb A}^{\!1}}

\def\Po{{\mathbb P^1}}
\def\and{\quad\text{and}\quad}


\DeclareMathOperator{\pr}{pr}

 \DeclareMathOperator{\Aut}{Aut}

\DeclareMathOperator{\spec}{Spec}

\def\lggd{}

\def\cWgg{\cW\lggd}


\newtheorem{prop}{Proposition}[section]

\newtheorem{theo}[prop]{Theorem}
\newtheorem{lemm}[prop]{Lemma}
\newtheorem{coro}[prop]{Corollary}
\newtheorem{rema}[prop]{Remark}

\newtheorem{defi}[prop]{Definition}
\newtheorem{definition}[prop]{Definition}

\newtheorem{defi-prop}[prop]{Definition-Proposition}
\newtheorem{defi-theo}[prop]{Definition-Theorem}


\def\Ob{\cO b}

\def\lloc{_{\mathrm{loc}}}

\def\loc{\mathrm{loc}}
\def\lloc{_\loc}

\def\ev{\text{ev}}

\def\sta{^\ast}

\def\sO{{\mathscr O}}

\def\sR{{\mathscr R}}

\def\beq{\begin{equation}}
\def\eeq{\end{equation}}

\def\Pf{{\PP^4}}

\def\bee{\begin{equation}}
\def\eeq{\end{equation}}

\def\sC{{\mathscr C}}

\def\fA{{\mathfrak A}}

\def\bd{{\mathbf d}}

\let\eps=\epsilon

\def\ti{\tilde}







\def\mapright#1{\,\smash{\mathop{\lra}\limits^{#1}}\,}

\def\un{^{\mathrm{un}}}

\def\broad{{\mathrm{br}}}
\def\narrow{{\mathrm{na}}}

\def\lred{_{\mathrm{red}}}

\def\bmu{{\boldsymbol \mu}}

\def\Gm{T}

\def\cDgg{{\cD\lnuga}}
\def\cDggg{\cD_{\ga,[\nu]}}
\def\cWgg{\cW_\Ga}
\def\cYgg{\cY_{\ga,\nu,e}}
\def\cWge{\cW_{\Ga'}}
\def\cWee{\cW_{\bar e}}

\let\ga=\Ga

\def\lorho{_{^{(1,\rho)}}}
\def\lophi{_{^{(1,\varphi)}}}
\def\lnuga{_{\ga,\nu}}
\def\bAB{\fA_{\Ga,e|\fB}} 

\begin{document}

\title[A vanishing]{A vanishing associated with irregular \\ MSP fields}

\author{Huai-Liang Chang}
\author{Jun Li}
\address{Mathematics Department, Hong Kong University of Science and Technology}
\address{Mathematics Department, Stanford
University and Shanghai Center for Mathematical Science, Fudan University}
\thanks{H.L. Chang was partially supported by HK GRF grant 16301515 and 16301717; J. Li was partially supported by NSF
grant DMS-1159156, DMS-1564500, and DMS-1601211.}

\maketitle

\begin{abstract}In \cite{CLLL} and \cite{CLLL2}, the notion of Mixed-Spin-P field is introduced and their
moduli space $\cW_{g,\gamma,\bd}$ together with a $\CC\sta$ action is constructed.
Applying virtual localization to their virtual classes
$[\cW_{g,\gamma,\bd}]\virt$, polynomial relations among GW and FJRW invariants of Fermat quintics are
derived.

In this paper, we prove a vanishing of a class of terms in $[(\cal{W}_{g,\gamma,\bf{d}})^{\mathbb{C}^*}]\virt$.
This vanishing proves that in Witten's GLSM for Fermat quintics, the FJRW invariants (for all genus)
with insertions $2/5$ will determine the GW invariants of quintic Calabi-Yau through CY-LG phase transitions.

%
%
\end{abstract}

\section{Introduction}

In \cite{CLLL}, the authors introduced the notion of Mixed-Spin-P fields (abbre. MSP fields),
and constructed the properly supported $\GG_m$-equivariant virtual cycles of the
moduli spaces of these fields.
Applying virtual localization \cite{GP}, we obtained
relations among the GW invariants of quintic CY threefolds, and a class of FJRW invariants of the Fermat quintic.
Among the class of FJRW invariants involved there is a subclass of {\sl broad-like} FJRW invariants;
provided that this subclass all vanish, we obtain
polynomial relations among the GW invariants of quintic CY threefolds, and
FJRW invariants of the Fermat quintic with insertion $-\frac{2}{5}$.
This paper is devote to prove such a vanishing (Theorem \ref{main}).

\smallskip

Recall that an MSP field is a collection
\beq\label{MSP0}
\xi=( {\Si^\sC}, \sC, \sL, \sN,\varphi,\rho, \nu_1,\nu_2),
\eeq
consisting of a pointed twisted curve $\Sigma^\sC\sub\sC$, invertible sheaves
$\sL$ and $\sN$, and a collection of fields $(\varphi,\rho,\nu_1,\nu_2)$ (cf. Definition \ref{def-curve}).
The MSP field $\xi$ comes with numerical invariants: the genus $g$ of $\sC$, the
monodromy $\gamma_i$ of $\sL$ at the $i$-th marking $\Si^\sC_i$,
and the bi-degrees $d_0=\deg \sL\otimes\sN$ and $d_\infty=\deg \sN$.

Given $g$, $\gamma=(\gamma_1,\cdots,\gamma_\ell)$ and $\bd=(d_0,d_\infty)$, we let
$\cW\lggd$ be the moduli of stable MSP fields of numerical data $(g,\gamma, \bd)$.
It is a separated DM stack, locally of finite type. (The data $(g,\gamma,\bd)$ will be fixed throughout this
paper.)

As shown in \cite{CLLL, CLLL2}, the moduli $\cW\lggd$ is a $T=\GG_m$ DM stack (cf. \eqref{inv});
admits a $T$-equivariant perfect obstruction theory
and an invariant cosection
$\sigma_\cW: \Ob_{\cW\lggd}\to\sO_{\cW\lggd}$, giving rise to a cosection localized virtual cycle
\cite{KL} 
$$[\cW\lggd]\virtloc\in A^{\Gm}\lsta \cW\lggd^-,
$$
where $\cW\lggd^-$ is the vanishing locus of $\sigma$.
In \cite{CLLL}), it is proved that $\cW\lggd^-$ is proper and of finite type.

Following \cite{CLLL2}, we decompose the fixed locus $\cW\lggd^T$ into
disjoint open and closed substacks
$$\cW\lggd^T=\coprod_{\Ga\in\Delta^\fl}\cW_{(\Ga)},
$$
indexed by a set of (flat) decorated graphs $\Delta^\fl$. By the virtual localization
\cite{GP, CKL}, after inverting the generator $\ft\in A^1_T(pt)$,
\beq\label{van-1}
[\cW\lggd]\virtloc=
\sum_\Gamma \frac{[\cW_{(\Ga)}]\virtloc}
{e(N_{\cW_{(\Ga)}/\cW\lggd})}
\in \bl A^{\Gm}\lsta \cW\lggd^-\br_\ft.
\eeq

We call a graph a pure loop if it has no legs, has no stable vertices, and every vertex has
exactly two edges attached to it.
In \cite{CLLL2}, we divided the set $\Delta^\fl$ into regular and irregular graphs
(Definition \ref{regu}).

\begin{defi}
Let $Z\sub \cW^T$ be a proper closed substack, viewed as a $T$-stack with trivial $T$ action.
We say $\alpha\in A\lsta^T Z$ is weakly trivial, denoted by
$\alpha\sim 0$, if there is a closed proper substack $Z'\sub \cW^T$ with $Z\sub Z'$ so that $\alpha$ is mapped
to zero under the induced homomorphism $A\lsta^T Z\to A\lsta^T Z'$.
\end{defi}

In this paper, we will prove

\begin{theo}\label{main}
Let $\Ga$ be an irregular graph and not a pure loop, then $[\cW_{(\Ga)}]\virtloc\sim 0$.
\end{theo}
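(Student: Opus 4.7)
The plan is to exploit the graph-decomposition of $\cW_{(\Ga)}$ from \cite{CLLL,CLLL2} together with the cosection localization of \cite{KL}, and to identify, in the non-pure-loop irregular case, a distinguished factor that forces $[\cW_{(\Ga)}]\virtloc$ to push forward to zero on an enlarged $T$-invariant closed substack $Z'\subset\cW\lggd^T$.

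First, I recall that $\cW_{(\Ga)}$ is (up to \'etale cover) a fibered product of vertex moduli --- GW data at level $0$, MSP data at level $1$, and FJRW data at level $\infty$ --- glued along the edge evaluation maps, with the $T$-equivariant obstruction theory and the invariant cosection $\sigma_\cW$ decomposing compatibly. This writes $[\cW_{(\Ga)}]\virtloc$ (modulo Euler classes of normal bundles) as an exterior product of vertex contributions in which at least one factor carries the cosection-localized class. Since $\Ga$ is irregular but not a pure loop, it must contain a feature absent from pure loops: an unstable level-$1$ rational bridge carrying a leg or adjacent to a stable vertex, or a hair (non-loop edge) attached to a looped core, or a stable GW/FJRW vertex next to a level-changing edge. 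In each case I would take this feature as the distinguished factor.

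Next, on the distinguished factor I would show the contribution is weakly trivial in one of two ways. Either (i) the restricted cosection is non-trivial on a locus of strictly smaller-than-virtual dimension there --- so the localized cycle already vanishes on the factor --- or (ii) the factor is a GW/FJRW localized class whose vanishing is handled by the narrow/broad analysis already carried out in \cite{CLLL,CLLL2}. The enlarged substack $Z'$ is constructed as the union of $\cW_{(\Ga)}$ with the neighboring strata $\cW_{(\Ga')}$ indexed by graphs $\Ga'$ obtained from $\Ga$ by smoothing the unstable bridge or by contracting the hair; the corresponding smoothing family provides a rational equivalence on $Z'$ that kills the image of $[\cW_{(\Ga)}]\virtloc$. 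Exterior-product compatibility of cosection localization, applied over the remaining vertex factors, propagates the vanishing to the full class on $\cW_{(\Ga)}$.

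The main obstacle is the combinatorial case analysis: irregular non-pure-loop graphs come in several genuinely different shapes depending on where the irregularity sits relative to legs, stable vertices, and loops, and each shape requires its own identification of the distinguished vertex or bridge, its own cosection calculation, and its own construction of the smoothing family used to define $Z'$. A secondary technical difficulty is verifying that the cosection-localized class behaves well under the smoothing in the presence of the $T$-action --- i.e.\ that the equivariant Chow push-forward from the vertex-factor decomposition into the enlarged stratum is compatible with cosection localization --- which requires careful deformation-theoretic bookkeeping of the MSP fields $(\varphi,\rho,\nu_1,\nu_2)$ across the smoothing divisor.
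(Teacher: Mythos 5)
Your proposal reads as a plausible high-level sketch, but it misses the two ideas that actually make the paper's argument go, and the mechanism you put in their place does not work as described.

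First, the paper's core vanishing is not obtained from a decomposition of $[\cW_{(\Ga)}]\virtloc$ as an exterior product of vertex contributions. In fact $\cW_\Ga$ is handled via a relative obstruction theory over a single auxiliary stack $\cD_\ga$ of $\Ga$-framed curves (Proposition~\ref{idcycle}), not via a product of GW/MSP/FJRW vertex moduli; the cosection couples the factors, so a naive exterior-product cosection-localized class is not what appears, and you would need to prove such a decomposition rather than invoke it. More importantly, in the no-string case the actual mechanism is a \emph{virtual dimension count}: Lemma~\ref{red-lem} computes $\vdim\cW_\Ga$ explicitly and shows it is strictly negative whenever $\Ga$ is irregular and not a pure loop (the irregularity at level-$\infty$ vertices is precisely what makes the $-4|S^1_\infty|$ and $-(m_a-2)$ terms drive the dimension below zero), so the cycle is literally zero. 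Your alternatives (i) ``cosection nontrivial on a lower-dimensional locus'' and (ii) ``handled by the narrow/broad analysis'' do not capture this; neither is the argument the paper uses. You also locate the irregularity in the wrong place --- in Definition~\ref{regu} irregularity is a condition at level-$\infty$ vertices, not on unstable level-$1$ bridges or on hairs off a looped core.

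Second, the passage from the general case to the no-string case is not by ``a smoothing family providing a rational equivalence on $Z'$.'' The paper instead \emph{trims strings} one at a time, and Proposition~\ref{reduction} establishes, via virtual pullback and a careful comparison of the two relative obstruction theories (and the $\bmu_5$-torsor $\beta$, the reduced stack $\cW_{\bar e}^\mu\cong$ coarse $\Pf$, etc.), an identity of the form $\jmath_*[\cW_\Ga]\virtloc = c\cdot\ti\kappa^*[\cW_{\Ga'}]\virtloc$. The enlarged substack realizing weak triviality is then $\cZ=\kappa^{-1}(\cZ')$ where $\cZ'$ is the enlarged substack for the trimmed graph $\Ga'$ (obtained inductively), not a union of $\cW_{(\Ga)}$ with neighboring strata from smoothing. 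A smoothing family in the $T$-fixed locus would move you to a different graph stratum, but it does not by itself supply a rational equivalence of cosection-localized virtual cycles; that requires the virtual-pullback machinery (Proposition~\ref{vpb}, following \cite{CKL} and \cite{Man}) that the paper develops. Without the dimension count and the trimming/virtual-pullback reduction, the proposal has no engine, so there is a genuine gap.
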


 Let $[\cdot]_0: A^\Gm\lsta \cW\lggd^-\to A_0(pt)$ be the proper pushforward induced by $\cW\lggd\to pt$.
Then Theorem \ref{main} implies that for the $\ga$ as stated in Theorem \ref{main}, and for any $\beta\in A\sta_T
\cW\lggd$,
$$\left[\beta\cap \frac{[\cW_{(\Ga)}]\virtloc}
{e(N_{\cW_{(\Ga)}/\cW\lggd})}\right]_0=0. 
$$

This vanishing theorem implies the only quintic FJRW invariants that contribute to the relations derived from the theory of
MSP fields are those with pure insertions $2/5$ (see \cite{CLLL2}).


\section{Irregular graphs}
\def\st{{\mathrm{st}}}
\def\wt{\mathrm{wt}}

In this section, we recall the notion of MSP fields, and decorated graphs associated
to $T$-invariant MSP fields. These notions and the proofs of the stated properties are taken from
\cite{CLLL2}.

\subsection{MSP fields}
Let $\bmu_5=\langle \zeta_5\rangle \le\GG_m$ be the subgroup of fifth-roots of unity, generated by $\zeta_5=\exp(\frac{2\pi\sqrt{-1}}{5})$. Let
$$\bmu_5^\narrow=\{(1,\rho), (1,\varphi),\zeta_5,\cdots,\zeta_5^4\}
\and \bmu_5^\broad=\{(1,\rho),(1,\varphi)\}\cup\bmu_5.
$$
Here $(1,\varphi)$ and $(1,\rho)$ are symbols, function as the identity element with special property; thus
the subgroup they generate
$\langle (1,\rho)\rangle=\langle (1,\varphi)\rangle=\{1\}\le \gm$ are the trivial subgroup.
Note that $\bmu_5^\narrow$ is by removing $1$ from $\bmu_5^\broad$. The data in $\bmu_5^\broad$ are called ``broad",
while that in $\bmu_5^\narrow$ are called ``narrow".

Let 
$$g\ge 0,\quad \gamma=(\gamma_1,\cdots,\gamma_\ell)\in (\bmu_5^\broad)^{\times\ell},\quad \bd=(d_0, d_\infty)\in
\QQ^{\times 2}.
$$
For an $\ell$-pointed twisted curve $\Si^\sC\sub \sC$, and for
$\alpha\in \bmu_5^\broad$, we agree 
$$\omega^{\log}_{\sC/S}=\omega_{\sC/S}(\Si^\sC),\and \Si^\sC_\alpha=\coprod_{\gamma_i=\alpha}\Si^\sC_i.
$$

\begin{definition}[{\cite{CLLL}}]
\label{def-curve}
A $(g,\gamma,\bd)$ MSP field $\xi$ is a collection \eqref{MSP0}
such that

\begin{enumerate}
\item[(1)] $\cup_{i=1}^\ell\Si_i^\sC = \Si^\sC\subset \sC$ is an $\ell$-pointed, genus $g$,
twisted curve such that the $i$-th marking $\Si^\sC_i$ is banded by the group $\langle\gamma_i\rangle\le \gm$;
\item[(2)] $\sL$ and $\sN$ are invertible sheaves on $\sC$, $\sL\oplus \sN$ representable,
$\deg \sL\otimes \sN=d_0$,   $\deg \sN=d_\infty$, and
the monodromy of $\sL$ along $\Si^\sC_i$ is
$\gamma_i$ when $\langle \gamma_i\rangle\ne\langle 1\rangle$;
\item[(3)] $\nu=(\nu_1, \nu_2)\in H^0( \sL\otimes\sN)\oplus  H^0( \sN)$, and $(\nu_1,\nu_2)$ is nowhere zero;
\item[(4)]
$\varphi=(\varphi_1,\ldots, \varphi_{5}) \in H^0(\sL)^{\oplus 5}$,  $(\varphi,\nu_1)$ is nowhere zero, and $\varphi|_{\Si^\sC_{(1,\varphi)}}=0$;
\item[(5)] $\rho \in H^0(\sL^{\vee\otimes 5}\otimes \omega^{\log}_{\sC/S})$,
$(\rho,\nu_2)$ is nowhere zero, and $\rho|_{\Si^\sC_{(1,\rho)}}=0$.
\end{enumerate}
We call $\xi$ (or $\gamma$) narrow if $\gamma\in (\bmu_5^\narrow)^\ell$. 
We call $\xi$ stable if $|\Aut(\xi)|<\infty$.
\end{definition}

The definition of monodromy can be found, say, in \cite{FJR, CLL}.
A typical example of monodromy is as follows. 
Consider $\sC=[\Ao/\bmu_5]$, where $\bmu_5$ acts on $\Ao=\spec \CC[x]$ via
$\zeta_5\cdot x=\zeta_5\upmo x$.
Then the $\sO_\sC$-module $x^{-2}\CC[x]$ has monodromy $\zeta_5^2$ at the stacky point.

\medskip

Throughout this paper, unless otherwise mentioned,  by
an MSP field $\xi$ we mean $\xi=(\Si^\sC,\sC,\sL,\cdots)$ as given in \eqref{MSP0}
with {\it{narrow}} $\gamma$.

\medskip
By the main theorem of \cite{CLLL}, the category $\cW$
of families of MSP-fields of data $(g,\gamma,\bd)$ is a separated DM stack.
The group $T=\GG_m$ acts on $\cW$ via
\beq\label{action}t\cdot (\sC,\Si^\sC, \sL,\sN,\varphi,\rho, \nu_1,\nu_2)=(\sC,\Si^\sC, \sL,\sN,\varphi,\rho, t\nu_1,\nu_2).
\eeq

%
The structure of $T$-invariant MSP fields can be summarized as follows.
Let $\xi\in \fixW$. Then
there is a homomorphism $h$ and $\Gm$-linearizations $(\tau_t,\tau_t')$ as shown
\beq\label{h-tau}
h: \Gm\lra \text{Aut}(\sC,\Si^\sC),\quad
\tau_t: h_{t\ast}\sL\lra \sL\and \tau'_t: h_{t\ast}\sN\lra \sN
\eeq
such that 
\beq\label{inv}
t\cdot (\varphi,\rho, \nu_1,\nu_2)=
(\tau_t,\tau_t')(h_{t\ast} \varphi,h_{t\ast}\rho,t\cdot h_{t\ast}\nu_1,h_{t\ast}\nu_2),\quad
t\in\Gm.
\eeq
(Here we allows fractional weight $T$ actions on curves, etc..)
We call such $\Gm$-actions and linearizations induced from $\xi\in \fixW$.
Since $\xi\in \cW^T$ is stable, such $(h,\tau_t,\tau_t')$ is unique.

Let $\bL_k$ be the one-dimensional weight $k$ $T$-representation. Let
\beq\label{LP}\sL^{\log}=\sL(-\Si^\sC\lophi)\and \sP^{\log}=\sL^{-5}\otimes\omega_{\sC}^{\log}(-\Si^\sC\lorho).
\eeq
 Then \eqref{inv} can be rephrased as
\beq\label{inv2}
(\varphi,\rho, \nu_1,\nu_2)\in H^0((\sL^{\log})^{\oplus 5}\oplus \sP^{\log}\oplus\sL\otimes\sN\otimes\bL_1\oplus\sN)^T.
\eeq

\subsection{Decorated graphs of $T$-MSP fields}
We describe the structure of $\fixW$, following \cite{CLLL2}. Let $\xi\in \fixW$, with $\sC$ its domain curve, etc.,
as in \eqref{MSP0}.
We decompose $\sC$ as follows: We let
$$\sC_0= (\nu_1=0)_\redd,\quad \sC_\infty=(\nu_2=0)_\redd,\quad \sC_1=(\rho=\varphi=0)_\redd\sub\sC;
$$
we let $\sA$ be the set of irreducible components of $\overline{\sC-\sC_0\cup\sC_1\cup\sC_\infty}$.
We let
$$\sC_{01}=\bigcup_{\sC_a\in\sA, \ \rho|_{\sC_a}=0}\sC_a,\quad
\sC_{1\infty}=\bigcup_{\sC_a\in\sA, \ \varphi|_{\sC_a}=0},\quad
\sC_{0\infty}=\bigcup_{\sC_a\in\sA, \ \rho|_{\sC_a}\ne0,\ \varphi|_{\sC_a}\ne 0}\sC_a. 
$$

We know that $\sC_0$, $\sC_1$ and $\sC_\infty$ are mutually disjoint,
and the action $h:\Gm\to \Aut(\sC,\Si^\sC)$ acts trivially on $\sC_0$, $\sC_1$ and $\sC_\infty$.
We also know that
every irreducible component $\sC_a\sub\sC_{01}$ (resp. $\sC_a\sub \sC_{1\infty}$; resp. $\sC_a\sub\sC_{0\infty}$)
is a smooth rational twisted curve with two $\Gm$-fixed points 
lying on $\sC_0$ and $\sC_1$ (resp. $\sC_1$ and $\sC_\infty$; resp. $\sC_0$ and $\sC_\infty$).


\medskip

We associate a decorated graph to each $\xi\in \fixW$.
For a graph $\Gamma$, besides its vertices $V(\Ga)$, edges
$E(\Ga)$ and legs $L(\Ga)$, the set of its flags is
$$
F(\Ga)=\{(e,v)\in E(\Ga) \times V(\Ga): v\in e\}.
$$
Given $\xi\in\cW^\Gm$, 
 let $\pi:\sC^{\text{nor}}\to\sC$ be its normalization. 
For any $y\in \pi\upmo(\sC_{\text{sing}})$, we denote by $\gamma_y$ the monodromy of $\pi\sta\sL$ along $y$.

\begin{defi}\label{graph1}
To $\xi\in \cW^\Gm$ we associate  a graph $\Ga_\xi$ as follows:
\begin{enumerate}
\item (vertex) let $V_0(\Ga_\xi)$, $V_1(\Ga_\xi)$, and $V_\infty(\Ga_\xi)$ be
the set of connected components of $\sC_0$, $\sC_1$, $\sC_\infty$ respectively, and
let $V(\Ga_\xi)$ be their union; 
\item (edge)   let $E_0(\Ga_\xi)$, $E_\infty(\Ga_\xi)$ and $E_{0\infty}(\Ga_\xi)$
be the set of irreducible components
of $\sC_{01}$, $\sC_{1\infty}$ and $\sC_{0\infty}$ respectively, and  let $E(\Ga_\xi)$
be their union; 
\item (leg) let $L(\Ga_\xi)\cong \{1,\cdots,\ell\}$ be the ordered set of markings of $\Si^\sC$,
$i\in L(\Ga_\xi)$ is attached to
$v\in V(\Ga_\xi)$ if $\Si_i^\sC\in \sC_v$;
\item (flag) $(e,v)\in F(\Ga_\xi)$ if and only if $\sC_e\cap \sC_v\ne \emptyset$. 
\end{enumerate}
We call $v\in V(\Ga_\xi)$ stable if $\sC_v\sub\sC$ is 1-dimensional, otherwise  unstable.
\end{defi}


We specify the decorations now. In the following,  let $V^S(\Ga_\xi)\subset V(\Ga_\xi)$ be the set of stable vertices.
Given $v\in V(\Ga_\xi)$,  let
$$S_v=\{\Si^\sC_j\in\sC_v\mid \Si^\sC_j\in\Si^\sC\},\quad E_v=\{e\in E(\Ga_\xi): (e,v)\in F(\Ga_\xi)\},
$$
consisting of the markings on $\sC_v$, and
of the edges attached to $v$, respectively.

For $v\in V^S(\Ga_\xi)$, we define
\beq\label{leg}\Si^{\sC_v}_{\text{inn}}=\Si^\sC\cap \sC_v,\quad
\Si^{\sC_v}_{\text{out}}=\overline{(\sC-\sC_v)}\cap\sC_v,\and \Si^{\sC_v}=\Si^{\sC_v}_{\text{inn}}\cup \Si^{\sC_v}_{\text{out}},
\eeq
called the inner, the outer, and the total
markings of $\sC_v$, respectively. They are respectively indexed by $S_v$, $E_v$ and $S_v\cup E_v$.

We adopt the following convention: for $a\in V(\ga_\xi)\cup E(\ga_\xi)$,
we define
$$d_{0a}= \deg \sL\otimes\sN|_{\sC_a},\quad d_{\infty a}= \deg\sN|_{\sC_a},
\and d_a=\deg \sL|_{\sC_a}=d_{0a}-d_{\infty a}.
$$
(This is consistent with $d_0=\deg \sL\otimes\sN$ and $d_\infty=\deg\sN$.)
For $e\in E_v$, we assign $\gamma_{(e,v)}$ according to the following rule:
\begin{enumerate}
\item when $d_e\not\in\ZZ$, assign $\gamma_{(e,v)}=e^{-2\pi\sqrt{-1}d_e}$; 
\item when $d_e\in\ZZ$ and $v\in V_\infty(\Ga_\xi)\cup V_1(\ga_\xi)$, 
assign $\gamma_{(e,v)}=(1,\varphi)$;
\item when $d_e\in\ZZ$ and $v\in V_0(\Ga_\xi)$, assign $\gamma_{(e,v)}=(1,\rho)$. 
\end{enumerate}


\begin{defi}\label{graph2}
We endow the graph $\Ga_\xi$ 
the following decoration:

\begin{itemize}
\item[(a)] (genus) Define $\vg: V(\Ga_\xi)\to \ZZ_{\geq 0}$ via $\vg(v)=h^1(\sO_{\sC_v})$.

\item[(b)] (degree) Define $\vd: E(\Ga_\xi)\cup V(\Ga_\xi)\to \QQ^{\oplus 2}$ via $\vd(a)=(d_{0a},d_{\infty a})$.

\item[(c)] (marking) Define $\vS: V(\Ga_\xi)\to 2^{L(\Ga_\xi)}$ via
$v\mapsto S_v\subset L(\Ga_\xi)$. 

\item[(d)] (monodromy) Define $\vec{\gamma}: L(\Ga_\xi)\to\bmu_5^\narrow$ via $\vec{\gamma}(\Si_i^\sC)=\gamma_i$.
\item[(e)] (level) Define $lev: V(\Ga_\xi)\to\{0,1,\infty\}$ by $lev(v)=a$ for $v\in V_a(\Ga_\xi)$.

\end{itemize}
\end{defi}

We form
\beq\label{VV0}
V^{a,b} (\Ga_\xi)=\{v\in V(\Ga_\xi)-V^S(\Ga_\xi): |S_v|=a, \,|E_v|=b\},
\eeq
and adopt the convention $V_j^S(\Ga_\xi)=V_j(\Ga_\xi)\cap V^S(\Ga_\xi)$;
same for $V_j^{a,b}(\Ga_\xi)$. 

We say $\Ga_\xi\sim \Ga_{\xi'}$ if there is an isomorphism of graphs $\Ga_\xi$ and $\Ga_{\xi'}$ that preserves
the decorations (a)-(e). We define
$$\Delta=\{\Ga_\xi\mid \xi\in\cW^T\}/\sim.
$$

\subsection{Decomposition along nodes}

We describe the decomposition of a $T$-MSP field along its $\Gm$-unbalanced nodes.

\begin{defi}
Let $\sC$ be a $\Gm$-twisted curve (i.e. twisted curve with a $T$-action)
and $q$ be a node of $\sC$. Let $\hat \sC_1$ and $\hat \sC_2$ be the
two branches of
the formal completion of $\sC$ along $q$. We call $q$ $\Gm$-balanced 
if $T_q\hat \sC_1\otimes T_q\hat \sC_2\cong \bL_0$ as $\Gm$-representations.
\end{defi}

For $\Ga\in\Delta$, we let
\beq\label{NGa}
N(\Ga)=V^{0,2}(\Gamma)\cup\{(e,v)\in F(\Ga)\mid v\in V^S(\Ga)\}.
\eeq
(Recall $v\in V^{0,2}(\Gamma)$ when $v$ associates to a node in $\sC$.)
Note that 
every $a\in N(\Ga_\xi)$ has its associated node $q_a$ of $\sC$.

\begin{defi}\label{def5.12}
We call $a\in N(\Ga_\xi)$ $\Gm$-balanced if 
the associated node $q_a$ is a $\Gm$-balanced node in $\sC$.
Let $N(\ga_\xi)\un\sub N(\Ga_\xi)$ be the subset of $T$-unbalanced.
\end{defi}


Clearly, if $v\in N(\Ga_\xi)$ is $T$-balanced, then $v\in V^{0,2}_1(\Ga_\xi)$.
Recall $d_e=\deg\sL|_{\sC_e}$.

\begin{lemm}[{\cite[Lemm.\,2.14]{CLLL2}}]\label{unstable-q}
For $v\in V_1^{0,2}(\Ga_\xi)$ with (distinct) $(e,v)$ and $(e',v)\in F(\Ga_\xi)$,
and letting $q_v=\sC_e\cap \sC_{e'}$ be the associated node,
then $q_v$ is $\Gm$-balanced
if and only if $d_{e}+d_{e'}=0$, and
$(\sC_e\cup\sC_{e'})\cap\sC_\infty$ is a node or a marking of $\sC$.
\end{lemm}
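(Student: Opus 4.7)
\emph{Proof plan.} Since $v\in V^{0,2}_1(\Ga_\xi)$, the component $\sC_v$ is zero-dimensional and $q_v$ is the node of $\sC$ at which the two edge-components $\sC_e,\sC_{e'}$ meet. As edges joining levels $0$ and $\infty$ avoid $\sC_1$, each of $e,e'$ lies in $E_0(\Ga_\xi)\cup E_\infty(\Ga_\xi)$. Writing $w_e,w_{e'}$ for the $T$-weights on the tangent lines $T_{q_v}\sC_e$ and $T_{q_v}\sC_{e'}$, the node $q_v$ is $\Gm$-balanced precisely when $w_e+w_{e'}=0$. The plan is to compute both weights directly from the $T$-equivariance \eqref{inv2} of the fields, in which $\nu_1$ has weight $1$ and $\varphi,\rho,\nu_2$ have weight $0$, and to use that on a rational twisted component with nontrivial $T$-action the tangent weights at the two fixed points are negatives of each other.

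For $e\in E_0$: the section $\nu_2$ trivializes $\sN|_{\sC_e}$ equivariantly with trivial character, and $\nu_1\in H^0(\sL\otimes\sN\otimes\bL_1)^T$ vanishes only at the level-$0$ endpoint $p_0$, to order $d_e$. Since $(\varphi,\nu_1)$ is nowhere zero, some component $\varphi_j\in H^0(\sL^{\log})^T$ is nonzero at $p_0$, forcing both that $p_0$ is not a $(1,\varphi)$-marking and that $\sL|_{p_0}$ is equivariantly trivial; combining with the vanishing of $\nu_1$ then gives $w_e=1/d_e$. For $e\in E_\infty$: now $\nu_1$ is nowhere vanishing on $\sC_e$ and trivializes $\sL\otimes\sN\otimes\bL_1$, while $\rho\in H^0(\sL^{\vee\otimes 5}\otimes\omega_\sC(\Si^\sC))^T$ must be nonzero at the level-$\infty$ endpoint $p_\infty^e$. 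A dichotomy arises from $\omega_\sC(\Si^\sC)|_{\sC_e}$ at $p_\infty^e$: if $p_\infty^e$ is a node of $\sC$ or lies in $\Si^\sC$ then a log pole is present and the fiber has weight~$0$, yielding $w_e=1/d_e$; if instead $p_\infty^e$ is a smooth point of $\sC$ outside $\Si^\sC$, no log pole is present, the fiber carries the nontrivial weight $-w_e$, and the relation forces the different value $w_e=-5/(5d_{\infty e}-1)$.

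Combining the three cases: in $(E_0,E_0)$ both weights $1/d_e,1/d_{e'}$ are positive and in $(E_\infty,E_\infty)$ both are negative in either branch of the dichotomy, so no balance occurs. In the remaining case $(E_0,E_\infty)$ the set $(\sC_e\cup\sC_{e'})\cap\sC_\infty$ consists of the single level-$\infty$ endpoint $p_\infty^{e'}$, and $w_e+w_{e'}=0$ is possible only in the log-pole branch, i.e.\ exactly when $p_\infty^{e'}$ is a node of $\sC$ or a point of $\Si^\sC$; in that situation balance is equivalent to $1/d_e+1/d_{e'}=0$, i.e.\ $d_e+d_{e'}=0$. The principal technical obstacle is the $\omega_\sC(\Si^\sC)$ computation separating the log-pole and no-log-pole cases at the level-$\infty$ endpoints; once this is done, the remaining case analysis is routine, provided fractional $T$-weights coming from stacky markings and nodes are tracked carefully.
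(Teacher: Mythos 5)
The paper does not reprove this lemma; it cites it verbatim from \cite[Lemma 2.14]{CLLL2}, so there is no in-paper proof to compare against, and I am evaluating your argument on its own. Your approach — read off the tangent weights at $q_v$ from the $T$-equivariance of $(\varphi,\rho,\nu_1,\nu_2)$ and check when they are opposite — is exactly the right one, and the three-way case split $(E_0,E_0)$, $(E_\infty,E_\infty)$, $(E_0,E_\infty)$ is the correct skeleton. However, there is a genuine gap in the decisive $(E_0,E_\infty)$ case: you assert that ``$w_e+w_{e'}=0$ is possible only in the log-pole branch'' without any justification. Plugging your own formulas into the no-log-pole branch gives $w_e+w_{e'}=0$ precisely when $5d_e=5d_{\infty e'}-1$, i.e.\ $d_e+d_{e'}=-\tfrac15$; this must be ruled out, and it is not automatic because degrees on twisted curves may be fractional. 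What saves you is an integrality statement you never invoke: either (a) the node $q_v$ is a balanced twisted node in the DM sense, so $\gamma_{(e,v)}\gamma_{(e',v)}=1$ and hence $e^{-2\pi i(d_e+d_{e'})}=1$, giving $d_e+d_{e'}\in\ZZ$; or (b) since $\nu_2$, $\varphi$, and $\nu_1$ are nonvanishing at $q_v$ and at the relevant outer endpoints, $\sL$ and $\sN$ have trivial monodromy at all the stacky points of $\sC_e,\sC_{e'}$, which forces $d_e,d_{\infty e'}\in\ZZ$. Either closes the gap, but your closing sentence (``once this is done, the remaining case analysis is routine'') glosses over precisely this step; the same integrality is also secretly needed to kill the mixed subcases of $(E_\infty,E_\infty)$.

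Two smaller points. First, a sign slip: you define $w_e$ to be the weight on $T_{q_v}\sC_e$, but your computation for $e\in E_0$ actually produces the tangent weight at the opposite end $p_0$ (they are negatives of each other). This does not change the final condition $d_e+d_{e'}=0$ because you use the same convention on both edges, but as written the derivation contradicts your own definition. Second, in the dichotomy at $p_\infty$, one should note explicitly that $p_\infty$ cannot be a $(1,\rho)$-marking (that would force $\rho(p_\infty)=0$, contradicting $(\rho,\nu_2)$ nowhere vanishing since $\nu_2(p_\infty)=0$); the log-pole weight computation of $\sP^{\log}|_{p_\infty}$ silently uses this.
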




We comment that although  a $\Gm$-balanced $a\in N(\Ga_\xi)$ is characterized by $q_a$ being $\Gm$-balanced,
the previous reasoning shows that it can be 
characterized by the information of the graph $\Ga_\xi$.
Thus for any $\ga\in\Del$, we can talk about $N(\ga)\un\sub N(\ga)$ without referencing to any $\xi$.

\medskip

We now introduce flat graphs and regular graphs. 
We call a graph $\Ga\in\Delta$ flat if $N(\Ga)\un=N(\Ga)$.
We let $\Del^\fl\sub\Del$ be the set of flat graphs.
In case $N(\Ga)\un \subsetneq N(\Ga)$, we will associate a unique flat $\ga^\fl$,
called the flattening of $\ga$, as follows.
For each $\Gm$-balanced $v\in N(\Ga)$, which lies in $V_1^{0,2}(\Ga)$,
we eliminate the vertex $v$ from $\Ga$,  replace the two edges $e\in E_\infty(\Ga)$ and $e'\in E_0(\Ga)$
incident to $v$ by
a single edge $\ti e$ incident to the other two  vertices that are incident to $e$ or $e'$,
and demand that $\ti e$ lies in $E_{0\infty}$. For the decorations, we agree $\vec g(\ti e)=0$ and
$(d_{0\ti e}, d_{\infty\ti e})=(d_{\infty e}, d_{\infty e})$ (since $d_{0e'}=d_{\infty e}$, using
$d_{0e}=d_{\infty e'}=0$),
while keeping the remainder unchanged.
Let $\Ga^{\mathrm{fl}}$ be the resulting decorated graph after applying this procedure to all $\Gm$-balanced $v$ in
$N(\ga)$. 
We call $\Ga^{\mathrm{fl}}$, which is flat, the flattening of $\Ga$. 
We introduce
$$\Del^\fl=\{\Ga^\fl\mid \Ga\in\Delta\}/\sim.
$$
Indeed, it is easy to check that $\Del^\fl=\{\Ga\in\Delta\mid \text{$\Ga$ is flat}\}$.

\medskip
Given a flat $\ga\in\Del^\fl$, we define a $\ga$-framed $T$-MSP field to be a pair $(\xi,\eps)$, where $\eps:\ga_\xi^\fl\cong\Ga$
is an isomorphism (of decorated graphs). 
Like in \cite{CLLL2},
we can make sense of families of $\Ga$-framed $T$-MSP fields (cf. \cite[Section 2.4]{CLLL2}).
We then form the groupoid $\Wfix$ of $\Ga$-framed $T$-MSP fields with obviously defined arrows;
$\Wfix$ is a DM stack, with a forgetful morphism
$$\iota_\ga: \Wfix\lra \fixW.
$$
Let $\cW_{(\Ga)}$ be the image of $\iota_\Ga$;
it is an open and closed substack of $\fixW$. The factored morphism
$\Wfix\to \cW\lrga$ 
is an $\Aut(\Ga)$-torsor.

The cosection localized virtual cycles
$[\cW_{(\ga)}]\virtloc$ are the terms appearing in the localization formula \eqref{van-1}. Because $\cW_\ga\to\cW_{(\ga)}$
is an $\Aut(\ga)$-torsor, the similarly defined virtual cycle $[\cW_{\ga}]\virtloc$ has (\cite[Coro.\,3.8]{CLLL2})
$$[\cW_{\ga}]\virtloc=|\Aut(\ga)|\cdot [\cW_{(\ga)}]\virtloc.
$$

For a vertex $v\in V_\infty(\Ga)$
with $\gamma_v=\{\zeta_5^{a_1},\cdots,\zeta_5^{a_c}\}$, we abbreviate $\gamma_v=(0^{e_0}\cdots 4^{e_4})$,
where $e_i$ is the number of appearances of $i$ in $\{a_1,\cdots,a_c\}$. (We require $a_j\in [0,4]$.)

\begin{defi}
We call a vertex $v\in V_\infty^S(\ga)$ exceptional if $g_v=0$ and $\gamma_v=(1^{2+k}4)$ or $(1^{1+k}23)$,
for some $k\ge 0$.
\end{defi}

\begin{defi}\label{regu}
We call a vertex $v\in V_\infty(\Ga)$ regular if the following hold:
\begin{enumerate}
\item In case $v$ is stable, then either $v$ is exceptional, or for every $a\in S_v$ and $e\in E_v$,
we have $\gamma_a$ and $\gamma_{(e,v)}\in\{\zeta_5,\zeta_5^2\}$.
\item In case $v$ is unstable and $\sC_v$ is a scheme point, then $\sC_v$ is a non-marking
smooth point of $\sC$.
\end{enumerate}
%
%
We call $\Ga$ regular if it is flat, and all its vertices $v\in V_\infty(\ga)$ are regular.
We call $\ga$ irregular if it is not regular.
\end{defi}

Theorem \ref{main} states that
{\sl for a non-pure loop irregular $\Ga$,  $[\cW_{\Ga}]\virtloc\sim 0$.}
We prove an easy and use fact.

\begin{coro}\label{coro}
Let $\Ga\in\Delta^\fl$ be a flat graph that contains an $e\in E_{0\infty}(\ga)$. Suppose $[\cW_{\ga}]\virtloc\ne 0$, then
$d_e=0$ and $\sC_e\cap \sC_\infty$ is a node or a marking of $\sC$.
\end{coro}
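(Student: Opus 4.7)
The plan is to prove the contrapositive: show that if $d_e \ne 0$ or $\sC_e \cap \sC_\infty$ is not a node or marking, then the moduli $\cW_\ga$ is actually empty, so $[\cW_\ga]\virtloc = 0$ trivially. Fix any $\xi = (\Si^\sC, \sC, \sL, \sN, \varphi, \rho, \nu_1, \nu_2) \in \cW_\ga$ and focus on the component $\sC_e$. By the structural statement recalled just above Lemma \ref{unstable-q}, $\sC_e$ is a smooth rational twisted curve with two $\Gm$-fixed endpoints $p_0 \in \sC_0$ and $p_\infty \in \sC_\infty$. Since the nodal and marking loci of $\sC$ are $\Gm$-invariant, every node or marking of $\sC$ lying on $\sC_e$ must be $\Gm$-fixed, hence belongs to $\{p_0, p_\infty\}$; writing $n_e, m_e$ for the numbers of such nodes and markings on $\sC_e$, this gives $n_e + m_e \leq 2$.

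Next I would extract two degree inequalities on $\sC_e$ from the nowhere-zero axioms. Since $\nu_1(p_0) = 0$ and $(\varphi, \nu_1)$ is nowhere zero, some $\varphi_i(p_0) \ne 0$, so $\varphi|_{\sC_e}$ is a nonzero section of $\sL^{\log}|_{\sC_e}$ and
\[
d_e - |\Si^\sC_{(1,\varphi)} \cap \sC_e| \;=\; \deg \sL^{\log}|_{\sC_e} \;\geq\; 0.
\]
Analogously, $\nu_2(p_\infty) = 0$ together with the nowhere-vanishing of $(\rho, \nu_2)$ gives $\rho(p_\infty) \ne 0$, so $\rho|_{\sC_e}$ is a nonzero section of $\sP^{\log}|_{\sC_e}$; using the genus-$0$ identity $\deg \omega^{\log}_\sC|_{\sC_e} = -2 + n_e + m_e$ this reads
\[
-5 d_e + (n_e + m_e - 2) - |\Si^\sC_{(1,\rho)} \cap \sC_e| \;=\; \deg \sP^{\log}|_{\sC_e} \;\geq\; 0.
\]

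Finally, multiplying the first inequality by $5$ and adding it to the second produces
\[
n_e + m_e \;\geq\; 2 + 5|\Si^\sC_{(1,\varphi)} \cap \sC_e| + |\Si^\sC_{(1,\rho)} \cap \sC_e| \;\geq\; 2.
\]
Combined with $n_e + m_e \leq 2$, every inequality must saturate: $n_e + m_e = 2$, $d_e = 0$, and $\sC_e$ carries no $(1,\varphi)$ or $(1,\rho)$ markings. The first equality says both endpoints of $\sC_e$ are nodes or markings---in particular $\sC_e \cap \sC_\infty$ is---and the middle one is $d_e = 0$, completing the proof. The whole argument is elementary degree bookkeeping on $\sC_e$; the only point requiring some care is the preliminary observation that $\Gm$-invariance pins all special points of $\sC$ on $\sC_e$ to the two endpoints $p_0$ and $p_\infty$.
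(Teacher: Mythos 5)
Your degree-bookkeeping strategy is appealing, but it rests on an assumption that is false for the flat graph $\Ga$: you take ``the structural statement recalled just above Lemma~\ref{unstable-q}'' to conclude that $\sC_e$ is a \emph{single} smooth rational curve with two fixed points on $\sC_0$ and $\sC_\infty$. That statement concerns irreducible components of $\sC_{0\infty}$, i.e.\ edges of the \emph{original} graph $\Ga_\xi$, whereas your $e$ belongs to the flattened graph $\Ga=\Ga_\xi^{\mathrm{fl}}$. As spelled out in Definition~\ref{def-ga}(4) and Remark~\ref{flat}, for $\xi\in\cW_\ga$ the curve $\sC_e$ attached to $e\in E_{0\infty}(\ga)$ may instead be a chain $\sC_{e-}\cup\sC_{e+}$ of two $\PP^1$'s meeting at a $\Gm$-balanced node over a vanished $V_1$-vertex, with $\sC_{e-}\subset\sC_{01}$ and $\sC_{e+}\subset\sC_{1\infty}$. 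In that case your inequality $n_e+m_e\le 2$ and both of your genus-zero degree formulas have to be redone on each of the two components separately, and as written your argument says nothing. This reducible case is not a side case: the paper's own proof begins by observing that for $\xi\in\cW_\ga\cap\cW^-$ one has $E_{0\infty}(\Ga_\xi)=\emptyset$, so \emph{every} $e\in E_{0\infty}(\ga)$ comes from flattening and $\sC_e$ is reducible. Thus the gap sits squarely on top of the case the corollary is really about.

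To close the gap you would need a second branch of the argument for the reducible case. The efficient way to do this (and what the paper does) is to note that a reducible $\sC_e$ arises by flattening a $\Gm$-balanced $v\in V_1^{0,2}(\Ga_\xi)$, and $\Gm$-balancedness is characterized by Lemma~\ref{unstable-q}: it forces $d_{e_+}+d_{e_-}=0$ (hence $d_e=0$) and $(\sC_{e_-}\cup\sC_{e_+})\cap\sC_\infty$ to be a node or marking. Combined with your irreducible-case degree computation (which, as a bonus, gives the slightly stronger fact that $\cW_\ga$ itself, not just $\cW_\ga\cap\cW^-$, is empty when $d_e\ne 0$ or the endpoint condition fails and $\sC_e$ happens to be irreducible), this would complete the proof. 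Your approach also never uses that $\xi$ lies in the degeneracy locus $\cW^-$, which is the mechanism the paper uses to reduce immediately to the reducible case; invoking it at the start would let you skip the irreducible case entirely.
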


\begin{proof}
Since $[\cW_{\ga}]\virtloc\in A\lsta^T (\cW_{\ga}\cap\cW^-)$, $[\cW_{\ga}]\virtloc\ne 0$ implies
that $\cW_{\ga}\cap\cW^-\ne\emptyset$. Let $\xi\in \cW_{\ga}\cap\cW^-$,
then $E_{0\infty}(\Ga_\xi)=\emptyset$. Thus the $e\in E_{0\infty}(\ga)$ must come from flattening a pair of
edges in $E_0(\Ga_\xi)$ and $E_\infty(\Ga_\xi)$. Applying Lemma \ref{unstable-q}, we prove the corollary.
\end{proof}

\section{The Virtual cycle $[\cW_\Ga]\virtloc$}

We begin with recalling the construction of the cosection localized virtual cycle $[\cW_{\ga}]\virtloc$.
Let $\cD$ be the stack of flat families of $(\Sigma^\sC, \sC, \sL,\sN)$, where $\Sigma^\sC\sub \sC$
are pointed twisted curves, $\sL$ and $\sN$ are invertible sheaves on $\sC$. The stack
$\cD$ is a smooth Artin stack, with a forgetful morphism $\cW\to\cD$.
By \cite{CLLL}, we have a perfect relative obstruction theory $\TT_{\cW/\cD}\to \EE_{\cW/\cD}$ and
a cosection $\sigma: \Ob_{\cW/\cD}\to\sO_\cW$.
Letting $\EE_{\cW}=\text{cone}(\TT_{\cD}[-1]\to \EE_{\cW/\cD})$ be the mapping cone, and letting
$\bar\sigma$ be the lift of $\sigma$, which exists. This way, we obtain a perfect obstruction theory
and cosection
$$\phi_{\cW}\dual:\TT_\cW\lra \EE_\cW\and \bar\sigma:\Ob_\cW=H^1(\EE_\cW)\lra\sO_\cW.
\footnote{As argued in \cite[Section 3.1]{CLLL2}, $\phi_{\cW}\dual$ is an arrow in
$D^+_{\mathrm{qcoh}}(\sO_{[\cW/T]})$; and $\bar\sigma$ is $T$-equivariant. (See \cite[Section 3.1]{CLLL2} for notation.)}
$$

Let $\iota_{\ga}: \Wfix\to \cW^T$ be tautological the finite \'etale morphism,
which factor through an $\Aut(\ga)$-torsor $\Wfix\to \cW_{(\ga)}$, with
$\cW_{(\ga)}\sub\cW^T\sub \cW$  open and closed. Taking $\Gm$-fixed part of the obstruction theory of $\cW$,
and using the tautological $\TT_{\cW_{\ga}}\to\TT_\cW$, we obtain
an obstruction theory (c.f. \cite[Prop.\,1]{GP})
\beq\label{ob-ga}\phi_{\cW_{\ga}}\dual : \TT_{\cW_{\ga}}\lra \EE_{\cW_{\ga}}.
\eeq
We then restrict $\iota_{\ga}\sta \bar\sigma$ to the $\Gm$-fixed part of $\iota_{\ga}\sta\Ob_{\cW}$ to obtain
a cosection
$$\iota_{\ga}\sta \bar\sigma^T: \Ob_{\cW_{\ga}}=(\iota_{\ga}\sta\Ob_{\cW})^{\Gm}\lra \sO_{\cW_{\ga}}.
$$
We let
$\cW_{\ga}^-=\cW_{\ga}\cap \cW^-$;
it is the degeneracy locus of $\iota_{\ga}\sta \bar\sigma^T$.

Applying the cosection localized Gysin map in \cite{KL}, we obtain
\beq\label{ab-cW}[\cW_{\ga}]\virtloc=0^!_\loc[\fC_{\cW_{\ga}}]\in A^T\lsta(\cW_{\ga}^-),
\eeq
where $\fC_{\cW_{\ga}}\in h^1/h^0(\EE_{\cW_{\ga}})$ is the intrinsic normal cone.

\medskip
In the remainder of this section, we assume $\Ga$ is an irregular graph with $V_1(\ga)=\emptyset$.
To prove the desired vanishing
$[\cW_\ga]\virtloc\sim 0$, we will work with a construction of $[\cW_{\Ga}]\virtloc$
via obstruction theories of $\cW_{\Ga}$ relative to the auxiliary stack of $\ga$-framed curves
$(\sC,\Si^\sC,\sL,\sN)$ (in $\cD$).

As we will be working with $T$-curves extensively, we make the following convention.
Let $(\Si^\sC,\sC)$ be a pointed $T$-curves, meaning that $T$ acts on the pointed twisted curve
$(\Si^\sC,\sC)$. We denote by $\sC\Tdec$ the curve after decomposing $\sC$ along all its $T$-unbalanced nodes.
Recall that given a flat $\Ga$ and a $(\xi,\eps)\in \cW_\ga$, where $\xi=(\sC,\cdots)$, etc.,
we not only have an identification of the $T$-unbalanced nodes of $\sC$ with $N(\ga)$,
but also an identification of the connected components of $\sC\Tdec$ with
$V^S(\ga)\cup E(\ga)$.
Further the $T$-linearization of $\sC$, and of $(\sL,\sN)$ restricted to each component $\sC_a$ in
$\sC\Tdec$ are specified by the data in $\ga$.

\begin{defi}\label{def-ga}
A $\Ga$-framed (twisted) curve is a $T$-equivariant $(\sC,\Si^\sC,\sL,\sN)$ (in $\cD$),
together with an identification $\eps$ identifying
\begin{enumerate}
\item the marking $\Si^\sC$ with the legs of $\ga$;
\item the $T$-unbalanced nodes of $\sC$ with $N(\ga)$, and
\item the connected components of $\sC\Tdec$ with
$V^S(\ga)\cup E(\ga)$,
\end{enumerate}
so that these identifications are consistent with the geometry of $(\sC,\Si^\sC)$, and
the $T$-linearization of $\sL$ and $\sN$ restricted to each component $\sC_a$ in
$\sC\Tdec$, as specified by the data in $\ga$. 
\begin{enumerate}
\item[(4)]
when $e\in E_{0\infty}(\ga)$, either $\sC_e$ is irreducible and then $\sE_c\cong\Po$, or $\sC_e$ is reducible
and then $\sC_e=\sC_{e-}\cup\sC_{e+}$ is a union
of two $\Po$'s so that, $\sC_{e-}\cap\sC_0\ne\emptyset$, $\sC_{e+}\cap\sC_\infty\ne\emptyset$,
and $\sL\otimes\sK\otimes\bL_1|_{\sC_{e+}}\cong\sO_{\sC_{e+}}$ and $\sN|_{\sC_{e_-}}\cong\sO_{\sC_{e-}}$.
\end{enumerate}
\end{defi}

\begin{defi} \label{def3.1}
A $\ga$-framed gauged twisted curve is a $T$-equivariant data
$\eta=(\sC,\Si^\sC, \sL,\sN, \nu_1,\nu_2)$ with an identification $\eps$
such that
\begin{enumerate}
\item $((\sC,\Si^\sC, \sL,\sN),\eps)\in\cD_\ga$;
\item $(\nu_1,\nu_2)\in H^0(\sL\otimes\sN\otimes\bL_1)^T\oplus H^0(\sN)^T$, such that
$\nu_1|_{\sC_0}=\nu_2|_{\sC_\infty}=0$, and $\nu_1|_{\sC_\infty}$ and $\nu_2|_{\sC_0}$ are nowhere vanishing.;
\item in case of (4) in Definition \ref{def-ga},
$\nu_1|_{\sC_{e+}}$ and $\nu_2|_{\sC_{e_-}}$ are nowhere vanishing.
\end{enumerate}
\end{defi}


Note that the condition (2) are (3) are dictated by (3)-(5) of Definition \ref{def-curve} at the presence of the
fields $(\varphi,\rho)$. Because of (3), The $T$-action on the domain curve of any $\xi\in\cD_\ga$ or $\cDgg$
are completely determined by $\Ga$.

Because the conditions in these definitions are open, we can speak of flat families of $\Ga$-framed curves
and $\ga$-framed gauged curves. We let
$\cD_\ga$ be the stack of flat families of $\Ga$-framed curves, where arrows are $T$-equivariant arrows in
$\cD$ that preserve the data of $\Ga$-framings.
Clearly, $\cD_\ga$ is a smooth Artin stack, with a forgetful morphism
$\cW_\ga\to\cD_\ga$.

We let $\cDgg$ be the stack of flat families of $\ga$-framed gauged twisted curves as in
Definition \ref{def3.1}. It is a smooth Artin stacks.
By forgetting the $\nu$ fields and forgetting the $\varphi$ and $\rho$-fields, respectively,
we obtain the forgetful morphisms $\cDgg\to\cDg$.

Let $\cD_{\ga,[\nu]}\sub\cD_\ga$ be the image stack of the forgetful $\cDgg\to\cD_T$. Let
\beq\label{pp}
\cDgg\mapright{p_1}\cD_{\ga,[\nu]}\mapright{p_2}\cD_\ga
\eeq
be the induced morphisms.

\begin{lemm}
All stacks in \eqref{pp} are smooth.
The morphism $p_1$ is smooth of DM type and the morphism $p_2$ is a closed embedding.
Assuming $V_1(\ga)=\emptyset$, then the fiber dimension of $p_1$ is $|V(\ga)|$,
and the codimension of $\text{image}(p_2)$ is $\sum_{v\in V^S(\Ga)} g_v$.
\end{lemm}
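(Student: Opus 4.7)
The plan is to work componentwise along the decomposition $\sC\Tdec=\coprod_{a\in V^S(\Ga)\cup E(\Ga)}\sC_a$ into stable-vertex and edge pieces: the $\Ga$-framing rigidifies the $T$-equivariant structure on each $\sC_a$, so computing the fibers and codimensions reduces to counting $T$-invariant sections, which in turn can be evaluated by localization at $T$-fixed points.

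For the smoothness of $\cD_\ga$, the combinatorial data of the $T$-unbalanced nodes is encoded by $N(\Ga)$, so gluing along those nodes is a smooth discrete operation. Thus $\cD_\ga$ is built from the product $\prod_a\cD_a$ of the moduli of $T$-equivariant twisted curves with line bundles $(\sC_a,\Si^{\sC_a},\sL|_{\sC_a},\sN|_{\sC_a})$; each factor is smooth (the classical moduli of pointed twisted curves with two line bundles for stable vertices, and a classifying stack of $T$-equivariant line bundles on a rigid rational curve for edges).

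For $p_2$: given $\eta\in\cD_\ga$, the existence of a valid pair $(\nu_1,\nu_2)$ reduces, on restriction to each stable component $\sC_v$ (on which $T$ acts trivially), to the condition that $\sN|_{\sC_v}$ for $v\in V_0^S(\Ga)$, respectively $\sL\otimes\sN\otimes\bL_1|_{\sC_v}$ for $v\in V_\infty^S(\Ga)$, admits a nowhere-vanishing $T$-invariant section. Since the $T$-weight is already zero by the $\Ga$-framing, this is equivalent to the underlying line bundle being trivial, i.e.\ $[\sN|_{\sC_v}]=0\in\Pic^0(\sC_v)$ and likewise on $V_\infty^S$. Hence $\cD_{\ga,[\nu]}$ is the preimage of the zero section of the smooth Picard fibration $\cD_\ga\to\prod_{v\in V^S(\Ga)}\Pic^0(\sC_v/\cD_\ga)$, which is a regular closed embedding of codimension $\sum_{v\in V^S(\Ga)} g_v$; smoothness of $\cD_{\ga,[\nu]}$ follows.

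For $p_1$, over $\cD_{\ga,[\nu]}$ the sheaves $(\pi_*\sN)^T$ and $(\pi_*(\sL\otimes\sN\otimes\bL_1))^T$ are locally free, and $\cDgg$ is the open substack of the total space of their direct sum cut out by the nonvanishing requirements of Definition~\ref{def3.1}. A componentwise count on $\sC\Tdec$ yields total rank $|V(\Ga)|$: each stable vertex contributes one dimension via its trivialized line bundle, each unstable vertex contributes one dimension via its $T$-fixed fiber value, and each edge $\sC_e\cong\Po$ with nontrivial $T$-action contributes no additional dimension because any $T$-invariant section is uniquely determined by its value at the relevant $T$-fixed endpoint. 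Under $V_1(\Ga)=\emptyset$ these contributions sum to $|V(\Ga)|$, giving a smooth projection of DM type with the stated fiber dimension. The main obstacle is exactly this per-edge count: one must compare the $\Ga$-prescribed $T$-weights at the two fixed points of each $\sC_e$ against the degree and $T$-linearization of $\sN$ and $\sL\otimes\sN\otimes\bL_1$ on the edge, ensuring the invariant section space is entirely pinned down by the adjacent vertex data and produces no surplus dimension.
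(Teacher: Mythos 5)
Your proposal is correct and follows essentially the same route as the paper: the paper declares the smoothness assertions and the closed-embedding statement "straightforward, omitted," computes the fiber dimension of $p_1$ by counting locally constant (i.e.\ $T$-invariant) sections on $\sC_0\cup\sC_\infty$ — which is exactly your per-component count, giving $|V_0(\Ga)|+|V_\infty(\Ga)|=|V(\Ga)|$ — and leaves the codimension of $p_2$ as "similar, omitted." Your Picard-fibration argument for the codimension and your explicit componentwise bookkeeping on $\sC\Tdec$ simply flesh out the steps the paper leaves implicit, and they are consistent with the direct-image-cone viewpoint ($C(\pi_*\cV)$ in the Appendix) that the paper uses throughout.
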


\begin{proof}
The proof that all stacks in \eqref{pp} are smooth, and $p_1$ is smooth and $p_2$ is a closed embedding are
straightforward, and will be omitted. Let $\xi=(\Si^\sC,\sC,\sL,\sN,\nu_1,\nu_2)$ be a close point in
$\cDgg$. In case $V_1(\ga)=\emptyset$, then the fiber dimension of $p_1$ at $\xi$ is the dimension of
choices of locally constant section $\nu_1|_{\sC_\infty}$ and $\nu_2|_{\sC_\infty}$, whose is the number of connected components of
$\sC_0\cup\sC_\infty$, which is $|V_0(\ga)|+|V_\infty(\ga)|=|V(\ga)|$.

The proof of the codimension is similar, and will be omitted.
\end{proof}

We let
$(\cC,\Si^\cC,\cL,\cN,\varphi,\rho,\nu)$ with $\pi: \cC\to\cW_\ga$
be the universal family of $\cW_\ga$. We let
$\cL^{\log}=\cL(-\Si^\cC_{^{(1,\varphi)}})$; let
$\cP^{\log}=\cL^{ -5}\otimes\omega_{\cC/\cW_\ga}^{\log}(-\Si^\cC_{^{(1,\rho)}})$,
and let
\beq\label{cU}
\cU=(\cL^{\log})^{\oplus 5}\oplus \cP^{\log}\and \cV=\cL\otimes\cN\otimes\bL_1\oplus\cN.
\eeq
Using the $\Gm$-invariant version of \cite[Prop.\,2.5]{CL}, the standard relative
obstruction theory of $\cWgg\to\cD_\ga$ is given by
$$\phi_{\cWgg/\cD_\ga}\dual: \TT_{\cWgg/\cD_\ga}\lra \EE_{\cWgg/\cD_\ga}\defeq R\pi_{\ast}^\Gm(\cU\oplus\cV);
$$
the standard relative
obstruction theory of $\cWgg\to\cDgg$ is given by
$$\phi_{\cWgg/\cDgg}\dual: \TT_{\cWgg/\cDgg}\lra \EE_{\cWgg/\cDgg}\defeq R\pi_{\ast}^\Gm\cU.
$$
Like the discussion before \eqref{ab-cW},
paired with their respective standard cosections, we obtain their localized virtual cycles
$[\cWgg]\virt_{\loc,\ga}$ of $\phi_{\cWgg/\cD_\ga}$,
and $[\cWgg]\virt_{\loc,\ga,\nu}$ of $\phi_{\cWgg/\cDgg}$.
Let $\cWgg^-$ be the vanishing locus of the cosection of $\phi_{\cWgg}$, mentioned before \eqref{ab-cW}.
We will show that the vanishing locus of the cosections of $\phi_{\cWgg/\cDg}$ and of $\phi_{\cWgg/\cDgg}$
are identical to $\cWgg^-$.

\begin{prop}\label{idcycle} Let $\Ga$ be irregular. 
Then 
$$[\cW_\Ga]\virt\lloc=[\cW_\Ga]\virt_{\loc,\ga} =[\cW_\Ga]\virt_{\loc,\ga,\nu} \in A\lsta \cW_\Ga^-.
$$
\end{prop}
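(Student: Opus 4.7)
The plan is to establish the two equalities $[\cW_\Ga]\virtloc=[\cW_\Ga]\virt_{\loc,\ga}$ and $[\cW_\Ga]\virt_{\loc,\ga}=[\cW_\Ga]\virt_{\loc,\ga,\nu}$ separately, by showing that all three cosections share the same degeneracy locus $\cW_\Ga^-$ and then invoking the standard compatibility of cosection-localized virtual cycles under passage through a smooth base stack.

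\textbf{Step 1: Common vanishing locus.} I would first verify that the cosections on $\Ob_{\cW_\Ga}$, $\Ob_{\cW_\Ga/\cDg}$ and $\Ob_{\cW_\Ga/\cDgg}$ all have degeneracy locus equal to $\cW_\Ga^-$. The underlying cosection is induced by the gradient of the Witten superpotential $W=\rho\sum_{i=1}^{5}\varphi_i^5$; this gradient only involves the $(\varphi,\rho)$-fields packaged in $\cU$, not the $\nu$-fields in $\cV$. Consequently the cosection on $R^1\pi_\ast^T(\cU\oplus\cV)$ factors through the projection onto $R^1\pi_\ast^T\cU$, and all three degeneracy loci coincide on the nose with $\cW_\Ga^-$.

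\textbf{Step 2: First equality.} By the construction recalled earlier, $\phi_{\cW_\Ga}\dual$ is the $T$-fixed part of $\iota_\Ga\sta\phi_\cW\dual$, while $\phi_\cW\dual$ itself is the cone $\text{cone}(\TT_{\cD}[-1]\to\EE_{\cW/\cD})$. Restriction together with taking $T$-fixed parts gives
\[
\EE_{\cW_\Ga}\simeq\text{cone}\bl\TT_{\cDg}[-1]\to\EE_{\cW_\Ga/\cDg}\br,
\]
where we use that $\cDg$ is smooth and open-closed in $\cD^T$. The standard compatibility of cosection-localized virtual cycles under forming the mapping cone with the cotangent of a smooth base (see \cite{KL}, as used throughout \cite{CLLL2}) then yields $[\cW_\Ga]\virtloc=[\cW_\Ga]\virt_{\loc,\ga}$.

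\textbf{Step 3: Second equality.} Since the preceding lemma also shows $\cDgg$ is smooth, the same cone construction expresses $\phi_{\cW_\Ga}\dual$ as $\text{cone}(\TT_{\cDgg}[-1]\to\EE_{\cW_\Ga/\cDgg})$. The two relative obstruction theories $\EE_{\cW_\Ga/\cDg}=R\pi_\ast^T(\cU\oplus\cV)$ and $\EE_{\cW_\Ga/\cDgg}=R\pi_\ast^T\cU$ then fit into a compatible distinguished triangle whose third term is the pullback of $\TT_{\cDgg/\cDg}$, which I would identify with $R\pi_\ast^T\cV$ on $\cW_\Ga$. Since the cosection vanishes on the $R\pi_\ast^T\cV$-summand by Step 1, a second application of the same comparison gives $[\cW_\Ga]\virt_{\loc,\ga}=[\cW_\Ga]\virt_{\loc,\ga,\nu}$. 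The main obstacle is precisely the identification $f\sta\TT_{\cDgg/\cDg}|_{\cW_\Ga}\simeq R\pi_\ast^T\cV$: the morphism $\cDgg\to\cDg$ factors through the non-smooth closed embedding $p_2$, so this is not an immediate consequence of smooth base change. Instead it relies on the explicit presentation of $\cDgg$ as the universal stack of nowhere-vanishing $\nu$-field data (Definition \ref{def3.1}), together with the dimension count in the preceding lemma (fiber dimension $|V(\Ga)|$ of $p_1$ matching $\pi_\ast^T\cV$, codimension $\sum_v g_v$ of $p_2$ matching $R^1\pi_\ast^T\cV$). Once this identification is secured, cosection compatibility is automatic from Step 1, and everything else assembles via the standard Kiem-Li machinery.
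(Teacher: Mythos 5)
Your Step 1 (common vanishing locus) and Step 2 (absolute-vs-relative comparison via the smooth base $\cDg$) follow the same outline as the paper's proof of the first and last identities, and are fine modulo details the paper itself must work out (it verifies that $\eps$ in \eqref{dia5} is an isomorphism using the auxiliary stack $\cM_T$ and Five-Lemma). The genuine gap is in Step 3, and you have actually diagnosed it yourself without closing it.

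The forgetful morphism $\cDgg\to\cD_\ga$ is neither smooth nor a regular embedding; it factors as $p_2\circ p_1$ with $p_1$ smooth and $p_2$ a regular closed embedding, and the two halves require different cone-compatibility mechanisms: intrinsic normal cones \emph{pull back} along a smooth morphism, while along a regular embedding one needs the \emph{transverse intersection} of the ambient cone with the normal bundle. Trying to run both at once by invoking ``the standard Kiem--Li machinery'' on the composite is exactly what fails, and ``explicit presentation plus a dimension count'' does not produce the required morphism of distinguished triangles nor the cone comparison. This is precisely why the paper introduces the intermediate stack $\cD_{\ga,[\nu]}=\mathrm{image}(\cDgg\to\cD_\ga)$ and inserts a fourth cycle $[\cW_\Ga]\virt_{\loc,\ga,[\nu]}$: the identity $[\cW_\Ga]\virt_{\loc,\ga,\nu}=[\cW_\Ga]\virt_{\loc,\ga,[\nu]}$ uses the smooth $p_1$ together with a canonical splitting $\tau$ of the triangle \eqref{T1}, and the identity $[\cW_\Ga]\virt_{\loc,\ga,[\nu]}=[\cW_\Ga]\virt_{\loc,\ga}$ uses the regular embedding $p_2$ together with the verification that the comparison map $\zeta$ in \eqref{d.t.2} is an isomorphism (via the obstruction-class argument of \cite[Thm.\,4.5]{BF}). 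Both pieces are substantive and neither is subsumed by the other.

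Relatedly, the identification you pose as the obstacle, $f\sta\TT_{\cDgg/\cDg}|_{\cW_\Ga}\simeq R\pi_\ast^T\cV$, is not what the paper uses and is not obviously correct as a quasi-isomorphism of two-term complexes: the paper only identifies $H^0$ of the smooth piece with $\pi_\ast^T\cV$ (equation \eqref{V}) and $H^1$ of the embedding piece with $R^1\pi_\ast^T\cV$, separately and at separate stages; nowhere does it assert the extension class matches that of $R\pi_\ast^T\cV$. To repair Step 3, reintroduce the intermediate stack $\cD_{\ga,[\nu]}$ (or some equivalent factorization), prove the splitting $\tau$, prove the isomorphism $\zeta$, and then apply the two distinct cone-compatibility statements in sequence, as the paper does.
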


\begin{proof}
We will choose a relative perfect obstruction theory
$$\phi_{\cWgg/\cD_{\ga,[\nu]}}\dual: \TT_{\cWgg/\cD_{\ga,[\nu]}}\lra \EE_{\cWgg/\cD_{\ga,[\nu]}},
$$
and show that its associated localized virtual cycle $[\cW_\Ga]\virt_{\loc,\ga,[\nu]}$ fits in
the identities
\beq\label{identities}
[\cW_\Ga]\virt_{\loc,\ga,\nu}=[\cW_\Ga]\virt_{\loc,\ga,[\nu]}=[\cW_\Ga]\virt_{\loc,\ga}=
[\cW_\Ga]\virt_{\loc}.
\eeq

We begin with constructing $\phi_{\cWgg/\cD_{\ga,[\nu]}}$.
First, because $p_1:\cDgg\to\cDggg$ is smooth,
$\TT_{\cDgg/\cDggg}$ is a locally free sheaf. Let
\beq\label{T1}
\TT_{\cWgg/\cDgg}\lra \TT_{\cWgg/\cDggg}\lra q\sta\TT_{\cDgg/\cDggg}\mapright{+1}
\eeq
be the d.t. associated with $\cWgg\to\cDgg\to\cDggg$. Here $q$ be the forgetful morphism from
$\cWgg$ to either $\cDgg$ or $\cDggg$, whose meaning will be apparent from the context. We claim that this d.t. splits naturally
via a
\beq\label{tau}
\tau: q\sta\TT_{\cDgg/\cDggg}\to  \TT_{\cWgg/\cDggg}.
\eeq
Indeed, let $\xi\in\cWgg$ be any closed point,
represented by $(\Si^\sC,\sC,\sL,\sN,\varphi,\rho,\nu_1,\nu_2)$. Let $\bar\xi=(\Si^\sC,\sC,\sL,\sN,\nu_1,\nu_2)$
be its image in $\cDgg$. Then any $x\in \TT_{\cDgg/\cDggg}|_{\bar \xi}$ is represented by an extension
$(\ti\nu_1,\ti\nu_2)$ of $(\nu_1,\nu_2)$ as a section of $(\sL,\sN)\times B_2$ over $(\Si^\sC,\sC)\times B_2$, where
$B_2=\spec \CC[\eps]/(\eps^2)$. We define $\tau(\xi)(x)\in \TT_{\cWgg/\cDggg}|_{\xi}$ be the family
$$\bl(\Si^\sC,\sC,\sL,\sN,\varphi,\rho)\times B_2, \ti\nu_1,\ti\nu_2\br.
$$
This definition extends in family version to a homomorphism $\tau$ as in \eqref{tau} that splits \eqref{T1}.
It follows that $q\sta\TT_{\cDgg/\cDggg}\to \TT_{\cWgg/\cDgg}[1]$ is zero, and
\beq\label{sum}
\TT_{\cWgg/\cDggg}=\TT_{\cWgg/\cDgg}\oplus q\sta\TT_{\cDgg/\cDggg}.
\eeq

By the construction of $\cDgg\to\cDggg$, we see that canonically we have
\beq\label{V}q\sta\phi_{\cDgg/\cDggg}\dual: q\sta\TT_{\cDgg/\cDggg}\mapright{\cong} \pi^T\lsta\cV.
\eeq
This together with \eqref{sum} gives us
$$\phi\dual_{\cWgg/\cDggg}=\phi\dual_{\cW_\ga/\cDgg}\oplus q\sta\phi_{\cDgg/\cDggg}:
\TT_{\cWgg/\cDggg}\lra \EE_{\cWgg/\cDggg}\defeq R\pi\lsta^T\cU\oplus \pi\lsta^T\cV
$$
that fits into the following homomorphism of d.t.s:
\beq\label{d.t.1}
\begin{CD}
\EE_{\cW_\ga/\cDgg}@>>>\EE_{\cW_\ga/\cDggg}@>{}>>\pi\lsta^T\cV@>+1>>\\
@AA{\phi\dual_{\cW_\ga/\cDgg}}A@AA{\phi\dual_{\cW_\ga/\cDggg}}A@A{}A{q\sta\phi_{\cDgg/\cDggg}}A\\
\TT_{\cW_\ga/\cDgg}@>>>\TT_{\cW_\ga/\cDggg}@>>>q\sta\TT_{\cDgg/\cDggg}@>+1>>.
\end{CD}
\eeq
Note that by our construction, $\pi\lsta^T\cV$ is a locally free sheaf of rank $|V(\ga)|$.
By inspection, as $q\sta\TT_{\cDgg/\cDggg}$ is a sheaf, we easily see that
$q\sta\phi_{\cDgg/\cDggg}$ is an isomorphism.

We form the following diagram,
\beq\label{d.t.2}
\begin{CD}
\EE_{\cW_\ga/\cDggg}@>{}>>\EE_{\cW_\ga/\cDg}@>>>R^1\pi\lsta^T\cV[-1]@>+1>>\\
@AA{\phi\dual_{\cW_\ga/\cDggg}}A @AA{\phi\dual_{\cW_\ga/\cDg}}A @A{}A{\zeta}A\\
\TT_{\cW_\ga/\cDggg}@>>>\TT_{\cW_\ga/\cDg}@>>>q\sta\TT_{\cDggg/\cDg}@>+1>>,
\end{CD}
\eeq
where the top line is induced by $\pi\lsta^T\cV\to R\pi\lsta^T\cV\to R^1\pi\lsta^T\cV$,
the bottom line is induced via $\cWgg\to \cDggg\to\cDg$. The arrow $\zeta$ is the one making the above
a homomorphism of d.t.s after we show that the left square is commutative.

We now show that the left square in \eqref{d.t.2} is commutative. Using the direct sum \eqref{sum}, and the
definition of $\phi\dual_{\cWgg/\cDggg}$, we see that the desired commutativity follows from the commutativity
of the following two squares:
\beq\label{sq3}
\begin{CD}
\EE_{\cW_\ga/\cDgg}@>>>\EE_{\cW_\ga/\cDg}\\
@AA{\phi\dual_{\cW_\ga/\cDgg}}A@AA{\phi\dual_{\cW_\ga/\cDg}}A\\
\TT_{\cW_\ga/\cDgg}@>>>\TT_{\cW_\ga/\cDg}
\end{CD}\quad\quad
\begin{CD}
\pi\lsta^T\cV@>>>R\pi\lsta^T\cV\\
@AA{q\sta\phi\dual_{\cDgg/\cDggg}}A@AA{\pr_2\circ\phi\dual_{\cW_\ga/\cDg}}A\\
q\sta\TT_{\cDgg/\cDggg}@>{e}>>\TT_{\cW_\ga/\cDg},
\end{CD}
\eeq
where the horizontal arrow $e$ 
is defined via the canonical
$$q\sta\TT_{\cDgg/\cDggg}=
H^0(q\sta\TT_{\cDgg/\cDggg})
\to H^0(q\sta\TT_{\cDgg/\cDg})
\to H^0(\TT_{\cWgg/\cDg})\to \TT_{\cWgg/\cDg}.
$$
We will prove that the left square is commutative in Proposition \ref{functorial}.
For the other, by the construction of the obstruction theory $\phi\dual_{\cW_\ga/\cDg}$, and using that
both $R^i\pi\lsta^T\cV$ are locally free, we conclude that the second square is also commutative.

We also need that $\zeta$ is an isomorphism. We first check that for any closed $\xi\in\cWgg$,
$H^1(\zeta|_\xi)$ is injective. Then using that both $H^1(\TT_{\cDgg/\cDggg})$
and $R^1\pi\lsta^T\cV$ are locally free of identical rank, we conclude that $H^1(\zeta)$ is an isomorphism.
Because $H^{i\ne 1}(\zeta)=0$,
this proves that $\zeta$ is an isomorphism.

Let $\xi$ be any closed point in $\cW_\ga$, represented by $(\Si^\sC,\sC,\sL,\sN,\varphi,\rho,\nu_1,\nu_2)$.
Let $x\ne 0\in H^1(q\sta\TT_{\cDggg/\cDg}|_{\xi})$, which is represented by a first order deformation of
$(\Si^\sC,\sC,\sL,\sN)$ so that $(\Si^\sC,\sC)$ remains constant, $(\sL,\sN)$ is deformed so that
$(\nu_1,\nu_2)$ can not be extended. Then for the same first
order deformation of $(\Si^\sC,\sC,\sL,\sN)$, $(\varphi,\rho,\nu_1,\nu_2)$ does not extend.
Then by \cite[Thm.\,4.5]{BF}, $H^1(\phi_{\cW_\ga/\cD_\ga}\dual|_\xi)(x)$ is the obstruction to the existence of such
extension, thus
$$H^1(\phi_{\cW_\ga/\cD_\ga}\dual|_\xi)(x)\ne 0\in H^1(\EE_{\cW_\ga/\cD_\ga}|_\xi).
$$

On the other hand, because the existence of the extensions of these four fields are independent of each other,
and because extending $(\nu_1,\nu_2)$ is already obstructed, by the construction of the relative
obstruction theory $\phi_{\cW_\ga/\cD_\ga}$,
$$H^1(\zeta|_\xi)(x)=\pr_2\bl H^1(\phi_{\cW_\ga/\cD_\ga}\dual|_\xi)(x)\br\ne 0\in H^1(R^1\pi\lsta^T\cV[-1]|_\xi).
$$
This proves that $H^1(\zeta|_\xi)$ is injective, thus $\zeta$ is an isomorphism.
\smallskip

We now show the first identity in \eqref{identities}; namely $[\cW_\Ga]\virt_{\loc,\ga,\nu}=[\cW_\Ga]\virt_{\loc,\ga,[\nu]}$.
We first apply \cite[Prop.\,2.7]{BF} to \eqref{d.t.1} to obtain a commutative diagram of cone stacks
$$
\begin{CD}
h^1/h^0(\pi\lsta^T\cV[-1])@>>>h^1/h^0(\EE_{\cW_\ga/\cDgg})@>\lam >>h^1/h^0(\EE_{\cW_\ga/\cDggg})\\
@AA{\cong}A @AA{(\phi\dual_{\cW_\ga/\cDgg})\lsta}A@AA{(\phi\dual_{\cW_\ga/\cDggg})\lsta}A\\
h^1/h^0(\pi\lsta^T\cV[-1])@>>>h^1/h^0(\TT_{\cW_\ga/\cDgg})@>>>h^1/h^0(\TT_{\cW_\ga/\cDggg}),\\
\end{CD}
$$
where both rows are exact sequences of abelian cone stacks.
Let
$$\fC_{\cW_\ga/\cDgg}\sub h^1/h^0(\EE_{\cW_\ga/\cDgg})\and
\fC_{\cW_\ga/\cDggg}\sub h^1/h^0(\EE_{\cW_\ga/\cDggg})
$$
be their respective virtual normal cones \cite{BF, LT}.
Applying argument analogous to  \cite[Coro.\,2.9]{CL} (see also \cite[Prop.\,3]{KKP}),
we conclude that $\lam\sta(\fC_{\cW_\ga/\cDggg})=\fC_{\cW_\ga/\cDgg}$.
Because the two cosections of $\Ob_{\cW_\ga/\cDgg}$ and
$\Ob_{\cW_\ga/\cDggg}$ lift to the same cosection of the absolute obstruction sheaf $\Ob_{\cW_\ga}$, we conclude that
the first identity of \eqref{identities} holds.

We prove the second identity of \eqref{identities}. By the same reasoning, from \eqref{d.t.2} we obtain
a commutative diagram of cone stacks
$$
\begin{CD}
h^1/h^0(\EE_{\cW_\ga/\cDggg})@>{\lam'}>>h^1/h^0(\EE_{\cW_\ga/\cDg})@>>>h^1/h^0(R^1\pi\lsta^T\cV[-1])\\
@AA{h^1/h^0(\phi\dual_{\cW_\ga/\cDggg})}A @AA{h^1/h^0(\phi\dual_{\cW_\ga/\cDg})}A @A{}A{\cong}A\\
h^1/h^0(\TT_{\cW_\ga/\cDggg})@>>>h^1/h^0(\TT_{\cW_\ga/\cDg})@>>>h^1/h^0(q\sta\TT_{\cDggg/\cDg}).
\end{CD}
$$
Because $\cDggg\to\cDg$ is a smooth closed embedding of normal bundle $R^1\pi\lsta^T\cV$, $\lam'$ is
a regular embedding of normal bundle isomorphic to $R^1\pi\lsta^T\cV$, where the later is a locally free sheaf.

By the normal cone construction \cite{Ful}, we see that $\lam'\bl h^1/h^0(\EE_{\cW_\ga/\cDggg})\br$
intersects $\fC_{\cWgg/\cDg}$ transversally, and
$\lam^{\prime-1}(\fC_{\cWgg/\cDg})=\fC_{\cWgg/\cDggg}$.
Because the two cosections of $\Ob_{\cW_\ga/\cDggg}$ and
$\Ob_{\cW_\ga/\cDg}$ lift to the same cosection of the absolute obstruction sheaf $\Ob_{\cW_\ga}$, we conclude that
the second identity of \eqref{identities} holds.

\smallskip
Finally, we prove the third identity in \eqref{identities}. From the canonical diagram
\beq\label{candia}
\begin{CD}
\cW_\Ga@>\iota_\ga>>\cW\\
@VV{q}V@VV{\ti q}V\\
\cD_{\ga}@>p>>\cD,
\end{CD}
\eeq
we have the following commutative
\beq\label{WgaDt}
\begin{CD}
\EE_{\cW_\ga/\cD_{\ga}}= ( \iota_\ga\sta\EE_{\cW/\cD})^T@>\sub>> \iota_\ga\sta\EE_{\cW/\cD}\\
@AA{\phi\dual_{\cW_\ga/\cD_{\ga}}}A@AA{\phi\dual_{\cW/\cD}}A\\
\TT_{\cW_\ga/\cD_{\ga}}@>>> \iota_\ga\sta \TT_{\cW/\cD}.
\end{CD}
\eeq
By the construction of the cosection, $\sigma_{\cW_\ga/\cD_{\ga}}=(\iota_\ga\sta\sigma_{\cW/\cD})^T$. Further
\eqref{candia} induces an arrow $q\sta\TT_{\cDg}[-1]\to \TT_{\cWgg/\cDg}$, which composed with
$\phi\dual_{\cW_\ga/\cD_{\ga}}$ in
\eqref{WgaDt} defines the arrow $c$ below:
\beq\label{dia5}
\begin{CD}
@>>>q\sta \TT_{\cD_{\ga}}[-1]@>c>>\EE_{\cWgg/\cD_{\ga}}@>>>\EE_{\cWgg}@>+1>>\\
@.@VV{\eps}V@| @VV{\eps'}V\\
@>>>(\iota_\ga\sta \ti q\sta \TT_{\cD})^T[-1]@>>>( \iota_\ga\sta\EE_{\cW/\cD})^T@>>>
(\iota_\ga\sta\EE_{\cW})^T@>+1>>.
\end{CD}
\eeq
We let $\eps$ be the tautological homomorphism.
By the construction of
$\EE_{\cW_\ga}$ and $\EE_\cW$, both rows are d.t.s. We chose the third vertical arrow
$\eps'$ to be the one making \eqref{dia5} a homomorphism
of d.t.s. It is an isomorphism after $\eps$ is shown to be an isomorphism.

The proof that $\eps$ is an isomorphism can be done with the aid of the stack $\cM$, which is the stack of pointed
twisted nodal curves. Let $\cM_T$ be the stack of pointed twisted nodal curves together with $T$-actions.
Using that the composites
$\cD_\ga\toright{p}\cD\toright{f}\cM$ and $\cD_\ga\toright{h}\cM_T\toright{}\cM$ are identical, we obtain
the following homomorphism of d.t.s:
\beq\label{DMT}
\begin{CD}
(p\sta \TT_{\cD/\cM})^T @>>> (p\sta \TT_{\cD})^T@>>> (p\sta f\sta\TT_{\cM})^T@>+1>>\\
@AA\alpha_1A@AA\alpha_2A@AA\alpha_3A\\
\TT_{\cD_{\ga}/\cM_T} @>>>\TT_{\cD_{\ga}} @>>> h\sta\TT_{\cM_T} @>+1>>.
\end{CD}
\eeq
One then verifies that both $\alpha_1$ and $\alpha_3$ are isomorphisms.
By Five-Lemma, $\alpha_2$ is an isomorphism. This proves that $\eps$ in \eqref{dia5} is an isomorphism.


By \eqref{WgaDt} and \cite[Prop.\,1]{GP}, the composite $\TT_{\cW_\ga}\to\TT_{\cW}\to  \iota_\ga\sta\EE_{\cW}$ lifts to an
obstruction theory $\phi_{\cW_\ga}\dual$, making the following square commutative
\beq
\begin{CD}
\EE_{\cW_\ga/\cD_\ga} @>>> \EE_{\cW_\ga}\\
@AA{\phi_{\cW_\ga/\cD_\ga}\dual}A @AA{\phi_{\cW_\ga}\dual}A\\
\TT_{\cW_\ga/\cD_\ga} @>>> \TT_{\cW_\ga}.
\end{CD}
\eeq
We then take the $H^1$ of the third column in \eqref{dia5} to obtain $\Ob_{\cWgg}{\cong}  (\iota_\ga\sta\Ob_{\cW})^T$.
Further one checks that the two cosections  coincide, which implies that they have identical vanishing
locus $\cW_\Ga^-$.
By the same reasoning as before, we conclude that the localized virtual class $[\cW_\Ga]\virtloc$ defined in \eqref{ab-cW} 
is identical to the class $0^!_{\text{loc}}[\fC_{\cW_\Ga/\cD_\ga}]$ (also see \cite[Prop.\,3]{KKP}).
This proves the lemma.
\end{proof}

\section{The vanishing in no string cases}

We first prove a special case of Theorem \ref{main}.
Let $\Ga\in \Delta^\fl$.
A {\sl string} of $\Ga$ is an $e\in E_{0\infty}(\Ga)$ so that the vertex $v$ of $e$
lying in $V_0(\Ga)$ is unstable and has no other edge attached to it.

\begin{prop}\label{reduction1}
Let $\Ga\in\Delta^\fl$ be irregular and not a pure loop. Suppose it does not contain strings,
then $[\cW_\Ga]\virtloc=0$.
\end{prop}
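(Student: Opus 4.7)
The plan is to use Proposition \ref{idcycle} to reduce to computing the relative cosection-localized class $[\cW_\Ga]\virt_{\loc,\ga,\nu}$ associated to $\phi_{\cW_\Ga/\cDgg}\dual: \TT_{\cW_\Ga/\cDgg} \to \EE_{\cW_\Ga/\cDgg} = R\pi\lsta^T\cU$. The vanishing will follow by exhibiting, at an irregular vertex $v\in V_\infty(\Ga)$, a surjective direct summand of the cosection on $R^1\pi\lsta^T\cU$ supported on $\sC_v$; the Kiem--Li cosection localization then forces the virtual cone into a proper subbundle of the obstruction bundle, killing the Gysin pullback.

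First, I would use the $\Ga$-framing together with the decomposition of $\sC\Tdec$ indexed by $V^S(\Ga)\cup E(\Ga)$ to present $\cW_\Ga$, \'etale-locally, as a fibered product of vertex and edge moduli glued along the $T$-unbalanced nodes $N(\Ga)$. This produces a direct-sum splitting of $R\pi\lsta^T\cU$ into vertex and edge contributions, with a compatible splitting of the cosection into local pieces. The ``no string'' hypothesis is essential here: every $e\in E_{0\infty}(\Ga)$ has its $V_0$-endpoint either stable or incident to another edge, so there is no isolated unstable $V_0$-vertex contributing a dangling $\nu_1$-deformation that would entangle the $\cU$-obstruction at $v\in V_\infty$ with a $V_0$-piece.

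Next, I would analyze the chosen irregular $v\in V_\infty(\Ga)$. By the definition of regular, either $v$ is stable, non-exceptional, and carries a marking $a\in S_v$ (or a flag $(e,v)$) whose monodromy lies outside $\{\zeta_5,\zeta_5^2\}$, or $v$ is unstable with $\sC_v$ a marking or a node of $\sC$. In the stable case, the broad-like monodromy contributes a distinguished rank-one sub/quotient of $R^1\pi\lsta^T\cP^{\log}|_{\sC_v}$ on which a Riemann--Roch and representation-theoretic analysis of the cosection (in the spirit of known broad FJRW vanishings for the quintic) exhibits a surjection onto $\sO_{\cW_\Ga}$. In the unstable case, a direct local smoothing computation at the scheme point $\sC_v$ yields an analogous surjective summand of the cosection.

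The hard part is this local cosection computation at the broad-like insertion: one must track the MSP cosection, under the vanishing $\varphi|_{\sC_v}=0$ forced by $v\in V_\infty$ and the constraint $\rho|_{\Si^\sC_{(1,\rho)}}=0$, to verify genuine surjectivity of the restricted cosection on the relevant summand of $H^1(\sC_v,\cP^{\log}|_{\sC_v})$. Once this surjectivity is in place, the Kiem--Li cone-stack argument combined with Proposition \ref{idcycle} yields $[\cW_\Ga]\virtloc=0$.
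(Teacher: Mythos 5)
Your proposal diverges fundamentally from the paper's argument, and I believe the route you propose has a genuine gap. The paper proves this proposition by a \emph{virtual dimension count}: after reducing to the case $V_1(\Ga)=\emptyset$, $V_0(\Ga)\ne\emptyset$, and (via forgetful morphisms) to graphs with no legs decorated by $\zeta_5$ or $(1,\rho)$, it computes $\vdim\cW_\Ga$ explicitly using Proposition~\ref{idcycle} and the relative obstruction theory over $\cD_\ga$. The computation of \eqref{Dgdim}--\eqref{B} (crucially depending on the no-string hypothesis so that $\deg\bigl(\sL^{\vee\otimes 5}\otimes\omega^{\log}_{\sC}\bigr)|_{\sC_e}=0$ and hence $\chi_T=\chi$ on the edge parts) yields formula \eqref{dimens}, and an analysis of maximal simple chains shows that irregularity forces $\vdim\cW_\Ga<0$, whence $[\cW_\Ga]\virtloc=0$. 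Irregularity is the last step: a zero-dimensional contribution occurs only when $m_a=2$ for all legs and $E=V^U=S^1=\emptyset$, which contradicts irregularity.

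Your proposed mechanism — exhibiting a surjective direct summand of the cosection at the irregular vertex, and concluding by Kiem--Li that the Gysin pullback vanishes — would, if correct, prove that the degeneracy locus $\cW_\Ga^-$ is empty. But that is both unproven and in general false: for a stable $v\in V^S_\infty$ with $g_v\ge 1$ and a broad-like monodromy, the FJRW-type moduli sitting inside $\cW_\Ga^-$ is typically nonempty, and the MSP cosection (and therefore every direct summand of it) vanishes identically on $\cW_\Ga^-$. The paper's dimension argument works even when $\cW_\Ga^-\ne\emptyset$; yours cannot. There is also an unsubstantiated structural claim: you assert a direct-sum splitting of $R\pi^T_*\cU$ and of the cosection into vertex and edge pieces that is "compatible" with the framing, but the gluing along the $T$-unbalanced nodes $N(\Ga)$ produces a Mayer--Vietoris complex, not a direct sum, and the paper establishes no such splitting. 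Finally, your explanation of the role of the no-string hypothesis (preventing a "dangling $\nu_1$-deformation") does not match its actual function: in the paper it guarantees that $\sC_e\cap\sC_0$ is a node, so $\deg\sP^{\log}|_{\sC_e}=0$, which is the step that makes the $T$-equivariant Euler characteristic in \eqref{B} collapse to an ordinary one. I would strongly recommend you abandon the cosection-surjectivity plan and instead carry out the dimension count.
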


\begin{rema} \label{flat}
We recall the convention on flat graph. Let $\xi=(\sC,\Si^\sC,\cdots) \in \cWgg$ be any closed point.
Note that $\ga$ might
be different from $\ga_\xi$, which happens when $\ga_\xi$ is not flat, while $\ga$ is the flattening of $\ga_\xi$.
In the case $V_1(\ga)=\emptyset$, then this happens when every $v\in V_1(\ga_\xi)$
has two edges $e_{v-}$ and $e_{v+}$ attached to it, and $\{v,e_{v-},e_{v+}\}$ in $\ga_\xi$ is replaced by a single edge
$e(v)\in E_{0\infty}(\ga)$. Our convention is that $\sC_{e(v)}=\sC_{e_{v-}}\cup\sC_{e_{v+}}$.
\end{rema}

We begin with a special case.

\begin{lemm}\label{red-lem}
Let the situation be as in Proposition \ref{reduction1}. Suppose $V_1(\ga)=\emptyset$ and
$V_0(\Ga)\ne \emptyset$. Then
$[\cW_\Ga]\virtloc=0$.
\end{lemm}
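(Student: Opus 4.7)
By Proposition~\ref{idcycle}, $[\cW_\Ga]\virtloc=[\cW_\Ga]\virt_{\loc,\ga,\nu}$, the cosection-localized cycle attached to the relative perfect obstruction theory $\phi_{\cWgg/\cDgg}\dual$ with $\EE_{\cWgg/\cDgg}=R\pi\sta^T\cU$, $\cU=(\cL^{\log})^{\oplus 5}\oplus\cP^{\log}$. In this formulation the $\nu$-deformations are absorbed into the smooth base $\cDgg$, so the virtual cycle is governed by the $T$-invariant deformations of $(\varphi,\rho)$ together with the cosection $\sigma$ induced from the superpotential $W=\rho\sum_{i=1}^{5}\varphi_i^{5}$. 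The strategy is to exhibit a canonically trivial $T$-equivariant subsheaf of $\Ob_{\cWgg/\cDgg}=(R^1\pi\sta\cU)^T$ that lies in $\ker(\sigma)$; capping against such a trivial factor, whose equivariant Euler class vanishes, then forces $[\cW_\Ga]\virt_{\loc,\ga,\nu}=0$.

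\textbf{Key steps.} First, decompose $(R^1\pi\sta\cU)^T$ along the normalization $\sC\Tdec$ at the $T$-unbalanced nodes of $\sC$, obtaining a direct-sum decomposition indexed by $V^S(\ga)\cup E(\ga)$ together with node-correction terms, in the spirit of the distinguished-triangle manipulations carried out in the proof of Proposition~\ref{idcycle}. Since $\rho|_{\sC_\infty}$ is nowhere vanishing and $T$-invariant, it $T$-equivariantly trivializes $\cP^{\log}|_{\sC_\infty}$ with trivial $T$-weight. Second, use the irregularity of $\ga$ to locate $v_\infty\in V_\infty(\ga)$ witnessing Definition~\ref{regu}(1) or (2). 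A local analysis near $v_\infty$ isolates a canonically trivial $T$-summand of the $\cP^{\log}$-contribution to the decomposition: in case (1) from the broad-type leg/flag monodromies $\zeta_5^3$ or $\zeta_5^4$ (producing $H^0$-type terms with trivial $T$-weight), and in case (2) from an extra $\Ext^1$-term at the marked or nodal scheme point $\sC_{v_\infty}$, again with trivial $T$-weight.

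Third, verify the kernel condition. The $\cP^{\log}$-piece of $\sigma$ is Serre-dual to pairing against $q(\varphi)=\sum\varphi_i^{5}$; on $\cW_\ga^-$ the pointwise cosection vanishing $\rho\varphi_i^{4}=0$ applied on $\sC_\infty$ (where $\rho$ is nowhere zero) forces $\varphi_i|_{\sC_\infty}\equiv 0$, hence $q(\varphi)|_{\sC_\infty}\equiv 0$; since the identified trivial summand is supported on $\sC_{v_\infty}\subset\sC_\infty$, the Serre pairing vanishes on it. The hypotheses $V_0(\ga)\neq\emptyset$, no-string, and non-pure-loop ensure the graph structure is such that the decomposition of Step~1 is clean: with $V_1(\ga)=\emptyset$ all edges lie in $E_{0\infty}(\ga)$ and connect $V_0$-vertices (present by hypothesis) to $V_\infty$-vertices, strings would create unstable $V_0$-vertices with a single incident edge that disrupts the splitting, and a pure loop would balance the identified trivial factor against two adjacent unbalanced edges and cancel its trivial $T$-weight.

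\textbf{Main obstacle.} The delicate step will be producing the trivial $T$-summand cleanly in case (2) of the irregularity definition, where $\sC_{v_\infty}$ collapses to a single scheme point and the trivial factor is an $\Ext^1$-class at a marking or node rather than an $H^1$ of a line bundle on a positive-dimensional component; tracking how $\sigma$ couples through the normalization-sequence boundary map at the adjacent $T$-unbalanced node requires an explicit local computation. A secondary check is that the $(\cL^{\log})^{\oplus 5}$-piece of $\sigma$ (proportional to $\rho\varphi^{4}$) does not reach the identified trivial summand through the boundary map; this should follow from $\varphi_i|_{\sC_\infty}\equiv 0$ on $\cW_\ga^-$, established above.
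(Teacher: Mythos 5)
The paper proves this lemma by a direct \emph{virtual-dimension count}, not by exhibiting a trivial obstruction direction. Concretely, it writes $\vdim\cW_\ga=\vdim\cW_\ga/\cD_\ga+\dim\cD_\ga$, computes $\dim\cD_\ga$ combinatorially, expresses $\vdim\cW_\ga/\cD_\ga$ as a sum of $T$-equivariant Euler characteristics of $\sL\otimes\sN\otimes\bL_1$, $\sN$, $(\sL^{\log})^{\oplus 5}$ and $\sP^{\log}$, shows these $T$-equivariant $\chi$'s reduce to ordinary $\chi$'s under the standing hypotheses, and arrives at the closed formula \eqref{dimens}
$$\vdim\cW_\ga=\Bigl(\sum_{v\in V^S}|E_v|-4|S_\infty^1|+\sum_{v\in V^S_\infty}|S^1_v|\Bigr)-3(|E|-|V^U|)-\sum_{a\in S^{\ne 1}}(m_a-2).$$
It then bounds this by decomposing $\ga$ into maximal simple chains, each contributing $-1$ or $-3$, and shows that equality to zero forces $\ga$ regular, contradiction; so $\vdim<0$ and $[\cW_\ga]\virtloc=0$. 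The general case is reduced to the simplified ``Assumption I'' by forgetting $\zeta_5$- and $(1,\rho)$-legs via a stabilization morphism $\cF$, using the compatibility $\theta\sta[\cW_{\Ga'}]\virtloc=[\cW_{\Ga}]\virtloc$.

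Your proposal takes a genuinely different route: you try to isolate a canonically trivial $T$-equivariant line in $\Ob_{\cWgg/\cDgg}$ lying in $\ker\sigma$, and conclude by ``capping against a trivial Euler class''. The crux of the gap is precisely that last step. Having a trivial-weight trivial sub-line-bundle $\sO\hookrightarrow\Ob$ inside $\ker\sigma$ does \emph{not} by itself kill the cosection-localized cycle; you would additionally have to show that the intrinsic normal cone $\fC_{\cW_\ga/\cDgg}$ factors through (or is contained in) a sub-bundle-stack complementary to that direction, so that the localized Gysin map $0^!_\loc$ factors through multiplication by $e(\sO)=0$. That cone-containment is a nontrivial geometric input that your outline does not supply, and there is no reason it should hold a priori. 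You flag the case (2) computation as the ``main obstacle'', but in fact even in case (1) the argument is incomplete for this reason. Note also that your proposal never touches the $(1,\rho)$- and $\zeta_5$-leg cases that the paper handles by forgetful morphisms; your step relying on $\rho|_{\sC_\infty}$ nowhere zero is fine for reading off $\varphi_i|_{\sC_\infty}=0$ on $\cW_\ga^-$, but it does not reach the leg-removal reduction. The paper's dimension count is both more elementary and more robust, since it needs no information about where the cone sits relative to the cosection.
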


\begin{proof}
Since $\Ga$ will be fixed throughout this proof, for simplicity we will use
$V$, $E$, etc., to denote $V(\Ga)$, $E(\Ga)$, etc.. Recall that for $v\in V^S$($=V^S(\Ga)$),
$E_v$ 
is the set of nodes $\sC_v\cap (\cup_{e\in E} \sC_e)$, and $S_v$ is the set of legs incident to $v$
(cf. \eqref{leg}).

We introduce more notations.
Let $\xi=(\sC,\Si^\sC,\cdots) \in \cWgg$ be a closed point; let $v\in V$.
For $a\in S_v$, in case $\langle\gamma_a\rangle\ne \{1\}$, we let
$m_a\in [1,4]$ be so that $\gamma_a=\zeta_5^{m_a}$.
We let $S_{v}^1\sub S_v$ be the subset of legs decorated by $(1,\varphi)$ or $(1,\rho)$.
(Since $\gamma$ is narrow, no legs are decorated by $1$.)
We denote
$S^1=\cup_{v\in V}S_{v}^1$, and $S^1_\infty=\cup_{v\in V_\infty}S_v^1$, etc.. Similarly, we denote
$S_{v}^{\ne 1}=S_v-S_{v}^{1}$, and
$S^{\ne 1}=\cup_{v\in V} S_{v}^{\ne 1}$. By the definition of MSP fields,
$S_v^{\ne 1}=\emptyset$ when $v\notin V_\infty$, implying $S^{\ne 1}= \cup_{v\in V_\infty}S^{\neq 1}_v$.
\smallskip

We calculate $\vdim\cW_\ga$. Because the perfect obstruction theory of $\cW_\ga$ is that relative
to $\cD_\ga$, we have
\beq\label{dim}
\vdim\cW_\ga=\vdim \cW_\ga/\cD_\ga+\dim\cD_\ga.
\eeq
For the second term, it is
\beq\label{Dgdim}\dim\cD_\ga=\sum_{v\in V^S}(3g_v-3+|E_v|+|S_v|)+\sum_{v\in V^S}2g_v+2h^1(\Ga)-|E|-2.
\eeq
Here $3g_v-3+|E_v|+|S_v|$ represents deformations of $\Si^{\sC_v}\sub \sC_v$, (where $\Si^{\sC_v}$
is defined in \eqref{leg},) $\sum_{v\in V^S}2g_v$ 
represent deformations of $\sL$ and $\sN$ restricting to $\sC_0$ and $\sC_\infty$.
The term $2h^1(\Ga)$ is the deformations of $\sL$ and
$\sN$ contributed by loops in $\Ga$; 
$|E|$ represents automorphisms of $\sC$, and $-2$ is due to the automorphisms of $\sL$ and $\sN$.

Next, using the relative perfect obstruction theory of $\cW_\ga/\cD_\ga$, we know that
$\vdim \cW_\ga/\cD_\ga$ is the sum of \eqref{A} and \eqref{B}:
\beq\label{A}
\chi_T(\sL\otimes\sN\otimes\bL_1)+\chi_T(\sN);
\eeq
\beq\label{B}
\chi_T\bl\sL(-\Sigma^\sC_{(1,\varphi)})^{\oplus 5}\br
+\chi_T\bl\sL^{\vee\otimes 5}\otimes\omega_\sC^{\log}(-\Sigma^\sC_{(1,\rho)})\br.
\eeq

Next, since $\nu_1|_{\sC_\infty}=\nu_2|_{\sC_0}=1$,
as $T$ sheaves $\sN|_{\sC_0}\cong \sO_{\sC_0}$ and $\sL\otimes \sN\otimes\bL_1|_{\sC_\infty}\cong\sO_{\sC_\infty}$.
Let $e\in E_{0\infty}(\Ga)$ be such that the associated curve $\sC_e\cong\Po$, and let $q_0=\sC_e\cap \sC_0$ and
$q_\infty=\sC_e\cap\sC_\infty$. Then using that $\nu_1|_{q_0}=0$, the invariance of $\nu_1$ implies that
$T$ acts non-trivially on $\sC_e$, thus forcing $T$ acts non-trivially on $\sL\otimes \sN\otimes\bL_1|_{q_0}$.
Then as $\nu_2|_{q_\infty}=0$, $T$ acts non-trivially on $\sN|_{q_\infty}$. In case $\sC_e$ is a union of two
$\Po$, a parallel argument shows that the same conclusion holds. Thus as $\Ga$ is connected,
$$\eqref{A}=\chi(\sL\otimes\sN\otimes\bL_1|_{\sC_\infty})+\chi(\sN|_{\sC_0})=
\sum_{v\in V_0}(1-g_v)+\sum_{v\in V_\infty}(1-g_v).
$$
Here when $\sC_v$ is a point, we agree $g_v=0$.

To proceed, we let $\xi=(\sC,\Si^\sC,\cdots) \in \cWgg$ be as before. Let $\chi_T$ of a $T$-sheaf be the $T$-equivariant
$\chi$ of the sheaf. We claim
\begin{align}\label{a}
\chi_T\bl\sL(-\Sigma^\sC_{(1,\varphi)})\br&=\chi\bl\sL(-\Sigma^\sC_{(1,\varphi)})\br.
\end{align}
Indeed, let $v\in V_0(\Ga)$, then because $\varphi|_{\sC_v}\ne 0$ and that $T$-acts trivially on $\sC_v$,
$T$ acts trivially on $\sL|_{\sC_v}$. For the same reason, for $v\in V_\infty(\Ga)$, $T$ acts trivially on both
$\sC_v$ and $\sL|_{\sC_v}$. On the other hand, suppose $E_{0\infty}(\Ga)=\{e\}$ has only one element, with
$\sC_e$ the associated curve. In case $\sC_e\cong\Po$, by Lemma \ref{unstable-q} we have
$\sL|_{\sC_e}\cong\sO_{\sC_e}$. Then \eqref{a} follows. In case $\sC_e$ consists of two $\Po$, say $\sC_e=\sC_{e-}\cup\sC_{e+}$, with $q=\sC_{e-}\cap \sC_{e+}$, $y_{-}=\sC_{e-}\cap\sC_0$ and $y_{+}=\sC_{e+}\cap\sC_\infty$.
By Lemma \ref{unstable-q}, $\deg\sL|_{\sC_{e-}}=-\deg\sL|_{\sC_{e+}}>0$, thus
\begin{align*}
H^i_T\bl\sL(-\Sigma^\sC_{(1,\varphi)})\br=H^i\bl\sL|_{\sC_\infty}(-y_{e+})\br\oplus
H^i\bl\sL(-\Sigma^\sC_{(1,\varphi)})|_{\sC_0}\br,
\end{align*}
and consequently \eqref{a} follows. The case where $E_{0\infty}(\Ga)$ contains many edges is similar. This proves
\eqref{a}.

We next claim that \eqref{a} holds with $\sL(-\Sigma^\sC_{(1,\varphi)})$ replaced by
$\sL^{\vee\otimes 5}\otimes\omega_\sC^{\log}(-\Sigma^\sC_{(1,\rho)})$.
Like before, we first consider the case $E_{0\infty}(\Ga)=\{e\}$.
Because $\ga$ contains no strings, $\sC_e\cap \sC_0$ is a node of $\sC$. By Lemma
\ref{unstable-q}, $\sC_e\cap \sC_\infty$ is also a node of $\sC$.
Thus $\deg \sL^{\vee\otimes 5}\otimes\omega_\sC^{\log}|_{\sC_e}=0$. Then the proof of \eqref{a} shows that
the claim holds in this case. The case $|E_{0\infty}(\Ga)|>1$ can be treated similarly.
This proves the claim.

\medskip

Consequently,
\begin{align*}\eqref{B}&=5\cdot \chi\bl\sL(-\Sigma^\sC_{(1,\varphi)})\br+\chi\bl\sL^{\vee\otimes 5}\otimes\omega_\sC^{\log}(-\Sigma^\sC_{(1,\rho)})\br \\
&=-5|\Sigma^\sC_{(1,\varphi)}|+5\bl\deg \sL+1-g-\sum_{a\in S^{\neq 1}}\frac{m_a}{5}\br+\\
&\qquad+\bl 2g-2+|S|-5\deg \sL-|\Sigma^\sC_{1,\varrho}|+1-g\br\\
&=4(1-g)-4|S_\infty^1| -\sum_{a\in S^{\ne 1}}(m_a-1).
\end{align*}

Because $\Ga$ is bare, $V^S=V^S_0\cup V^S_\infty$; because $\Ga$ has no string, $V^{1,1}_0=\emptyset$.
Therefore $\Sigma_{(1,\rho)}^{\sC}= \cup_{v\in V_0}S_v=\cup_{v\in V^S_0} S_v$.   Similarly, for any
$v\in V^U_\infty$($=V_\infty-V_\infty^S$) that has a leg attached to it, 
$v$ has exactly one edge $e$ attached to it, which must lie in $E_{0\infty}$ as $V_1=\emptyset$.
By Corollary \ref{coro}, the leg of $v$ must be a
scheme marked point (i.e. in $\Sigma_{(1,\varphi)}$).
Thus $S^{\ne 1}= \cup_{v\in V_\infty}S^{\neq 1}_v$ is the same as $\cup_{v\in V_\infty^S}S^{\neq 1}_v$.
Putting together we obtain 
\beq\label{aux}\sum_{v\in V^S}|S_v|=|\Sigma_{(1,\rho)}^\sC|+|S^{\ne 1}|  +\sum_{v\in V^S_\infty} |S^1_v|.
\eeq

\noindent
{\bf Assumption I}.
{\sl No leg of $\Ga$ is decorated by $(1,\rho)$,
	and $m_a\ne 1$ for every $a\in S^{\ne 1}$. }
\smallskip

Under this assumption, we have the Euler equation $|E|-|V|=h^1(\Ga)-1$,
$g=\sum_{v\in V^S}g_v+h^1(\Ga)$, and $\Si^\sC_{(1,\rho)}
=\emptyset$. Using \eqref{dim}, and adding \eqref{Dgdim}, \eqref{A} and \eqref{B}, we get
\beq\label{dimens}
\vdim \cWgg= \bl \sum_{v\in V^S}|E_v| -4|S_\infty^1| +\sum_{v\in V^S_\infty} |S^1_v|\br
-3(|E|-|V^U|)  -\sum_{a\in S^{\ne 1}}(m_a-2) .
\eeq
Note that when $\Ga$ is a pure loop, it is zero.
We now prove that under the assumption of the proposition,
\eqref{dimens} is negative when $[\cW_{(\ga)}]\virtloc\ne 0$, impossible. 

We first consider the case where $V^S=\emptyset$. Since $\Ga$ is not a pure loop,
$\sC$ is a chain of $\Po$ connecting two vertices $v$ and $v'$.
Since $V_1=\emptyset$, $E=E_{0\infty}$. Since $\Ga$ has no strings, both $v$ and $v'\in V_\infty$.
Then by Corollary \ref{coro}, each $v$ and $v'$ has one leg in $\Sigma_{(1,\varphi)}$ attached to it.
Thus $|S^1_\infty|=2$ and $|E|-V^U|=-1$, implying that \eqref{dimens} is $-4\cdot 2+3<0$.


We now assume $V^S\ne \emptyset$. Out strategy is to divide the contribution in \eqref{dimens} by
looking at the maximal simple chains in
$\Ga$. Here a simple chain in $\Ga$ consists of distinct edges $E_1,\cdots, E_k$ and vertices $v_0,\cdots, v_k$
so that $E_i$ has vertices $v_{i-1}$ and $v_i$, and $v_{0<i<k}$ are unstable. Since   $\Ga$ is not
a pure loop and $|V^S|>0$, if $\{E_1,\cdots,E_k\}$ is a maximal simple chain in $\Ga$, then one of $\{v_0,v_k\}$ must
be stable.

Clearly maximal simple chains give  partitions of $E$ and $V^U$. Now let
$\{E_1,\cdots,E_k\}$
be a maximal simple chain in $\Ga$. Suppose $v_0$ is stable but $v_k$ is not,
then $v_k\in V_\infty^U$  because $\Ga$ contains no strings. Thus $|S_{v_k}^1|=1$ by Corollary \ref{coro}.
Therefore the contribution to \eqref{dimens} from $\{E_1,\cdots E_k,v_1,\cdots,v_k\}$ is
\beq\label{chain1}1-4|S_{v_k}^1|=-3.
\eeq
The other case is when the maximal chain has both $v_0$ and $v_k$
stable, then  the contribution to \eqref{dimens} from $\{E_1,\cdots E_k,v_1,\cdots,v_{k-1}\}$ is
\beq\label{chain2}2-3=-1.\eeq

We now show that \eqref{dimens} is non-positive. Let $\ga'$ be the graph resulting from removing
all edges, all unstable vertices, and all legs attached to unstable vertices.
Because every $e\in E$ or $v\in V^U$ is contained in exactly one maximal simple chain,
the previous argument shows that
$$\vir.\dim \cW_\ga\le \vir.\dim\cW_{\ga'}.
$$
Applying the formula \eqref{dimens} to $\vir.\dim\cW_{\ga'}$, we see that it contains terms of the following kind:
(i) terms associated to elements in $\cup_{v\in V^S}S^1_v$; each contributes $-4+1=-3$; (ii)
terms associated to elements in $ \cup_{v\in V^U} S^1_v$; each contributes $-4$;  (iii)
terms associated to elements $a\in S^{\neq 1}$; since $m_a\geq 2$ by our {\sl simplifying assumption},
it contributes $m_a-2\leq 0$. This shows that \eqref{dimens} is $\leq 0$.

When \eqref{dimens} is zero, we must have $E=V^U=S^1=\emptyset$, and $m_a=2$ for every
$a\in S^{\neq 1}$. But this is impossible because $\Ga$ is irregular. This proves that under the {\sl simplified assumption} and
when $\Ga$ is not a pure loop, $\text{vir}.\dim \cW_\Ga<0$, implying $[\cW_\Ga]\virtloc=0$.
\medskip

We now prove the proposition without assuming {\sl Assumption I}.
We first suppose $\Ga$ has a leg $i_0$ ($i_0$-th leg) decorated by $\gamma_{i_0}=\zeta_5$ attached to
$v\in V_\infty$. We claim that $v$ is stable. Indeed, otherwise $v$ has an edge $e$ attached to $v$; since
$\ga$ is bare $e\in E_{0\infty}(\ga)$; but by Lemma \ref{unstable-q} $d_e\in \ZZ$, which contradicts to
that $i_0$ is decorated by $\zeta_5$. Thus $v$ is stable.
We let $\Ga'$ be the graph obtained by removing the leg $i_0$ from $\Ga$, except the following cases,
when $\ga$ is a one vertex graph, $g_v=0$ and $|S_v|=3$. (Note that since $\Ga$ is irregular, $g_v=1$ and $|S_v|=1$
is impossible.)
Note that if $g_v=0$, $|S_v|+|E_v|=3$, and $v$ has at least one edge, $v$ in $\Ga'$ becomes unstable.

Following \cite[Thm.\, 4.5]{CLL}, we have a forgetful morphism
$$\cF: \cW_\Ga\lra\cW_{\Ga'}
$$
that send
$\xi=(\Si^\sC,\sC,\cdots)\in\cW_\Ga$ to $\xi'=(\Si^{\sC'},\sC', \cdots)\in\cW_{\Ga'}$ by
{\sl marking forgetting and stabilizing}.

\noindent
{\bf Marking forgetting and stabilizing}. {\sl The curve
$\sC'$ is from $\sC$ by forgetting the marking $\Si^\sC_{i_0}$, making $\sC$ scheme along
$\Si^\sC_{i_0}$, and stabilize if necessary, with $\sC'$ the resulting curve; $\Si^{\sC'}$ is $\Si^\sC$ with
$\Si^\sC_{i_0}$ deleted; letting $\eps: \sC\to\sC'$ the resulting morphism, letting
$\sL'=\eps\lsta\sL$ and $\sL'\otimes\sN'=\eps\lsta(\sL\otimes \sN)$, while $\varphi'$, etc., is the
pushforward of $\varphi$, etc., respectively.}

We next compare the virtual cycles $[\cW_\ga]\virtloc$ and $[\cW_{\Ga'}]\virtloc$.
First, applying {\sl Marking forgetting and stabilizing},
we obtain a morphism $f: \cD_{\ga}\to\cD_{\ga'}$, which
fits into the commutative square shown:
\beq\label{fg-square}
\begin{CD}
\cW_\Ga @>{\cF}>> \cW_{\Ga'}\\
@VVV @VVV\\
\cD_{\ga} @>{f}>>\cD_{\ga'}.
\end{CD}
\eeq

Let $(\cC,\Si^\cC,\cL,\cdots)$ and $(\cC',\Si^{\cC'},\cL',\cdots)$ be the universal families of $\cW_\ga$ and
$\cW_{\ga'}$, respectively, with $\pi:\cC\to\cW_\ga$ and $\pi':\cC'\to\cW_{\ga'}$ their projections.
The stabilization defines the $\Psi$ below
$$\begin{CD}
\cC @>\Psi >> \cC'\times_{\cW_{\ga'}}\cW_\ga @>\pr >> \cC'\\
@VV\pi V @VV\ti\pi V @VV\pi' V\\
\cW_\ga @= \cW_\ga @>\cF >> \cW_{\ga'}.
\end{CD}
$$
It is direct to check that we have canonical isomorphisms $\pr\sta\cL'\cong \Psi\lsta \cL$, and
$R^1\Psi\lsta \cL=0$. This implies
$$R\pi\lsta^T\cL(-\Si^\cC\lophi)\cong R\ti\pi\lsta^T \Psi\lsta \cL(-\Si^\cC\lophi)
\cong \cF\sta R\pi^{\prime T}\lsta \cL'(-\Si^{\cC'}\lophi),
$$
and similar isomorphisms with $\cL(-\Si^\cC\lophi)$ replaced by $\cL^{-5}\otimes \omega_{\cC/\cW_\ga}(-\Si^\cC\lorho)$,
etc..

Like in \cite{CLL}, this shows that the relative obstruction theory of $\cW_{\Ga'}\to\cD_{\ga'}$
pullbacks to that of $\cW_\Ga\to\cD_{\ga}$,
and the cosection of $\Ob_{\cW_{\Ga'}/\cD_{\ga'}}$
pullbacks to that of $\Ob_{\cW_{\Ga}/\cD_\ga}$. Thus,
letting $\theta=\cF|_{\cW_\ga^-}: \cW_\Ga^-\to \cW_{\Ga'}^-$, we have
\beq\label{iso00}
\theta\sta[\cW_{\Ga'}]\virtloc=[\cW_{\Ga}]\virtloc.
\eeq

In case $\Ga$ has a leg decorated by $(1,\rho)$, we remove this leg from $\Ga$, resulting a new
graph $\Ga'$. (In this case since $\Ga$ is irregular, $\Ga$ can not be a single vertex graph.)
Then we have a similarly defined forgetful morphism $\cF: \cW_\Ga\to\cW_{\Ga'}$ (with stabilization
if necessary) and $\theta$ as before so that \eqref{iso00} holds.

By repeating this procedure (of removing legs labeled by $\zeta_5$ or $(1,\rho)$) we obtain a
graph $\Ga'$ and morphisms $\cF$ and $\theta$ as before so that \eqref{iso00} holds.
As $\Ga'$ is bare, not a pure-loop and satisfies the {\sl Assumption I}, we have $[\cW_{\Ga'}]\virtloc=0$.
By \eqref{iso00}, $[\cW_{\Ga}]\virtloc=0$. This proves the lemma.
\end{proof}

\begin{proof}[Proof of Proposition \ref{reduction1}]
By a result proved at the end of \cite[Section 3]{CLLL2}, we know $[\cW_\ga]\virtloc=0$ if
there is a $v\in V^{0,2}_\infty(\ga)$ so that the two edges $e$ in $\ga$ incident to $v$ both lie
in $E_\infty(\ga)$ and have $d_e=\deg\sL|_{\sC_e}\in\ZZ$. We now suppose $\ga$ has no such
vertices.

We next trimming all edges of $\ga$ in $E_0(\ga)\cup E_\infty(\ga)$. For $e\in E_0(\ga)$, in case $e$ is incident to a
stable $v\in V_0^S(\ga)$, or in case $e$ is incident to an unstable $v\in V_0^U(\ga)$ so that another edge in
$E(\ga)$ is also incident to
$v$, we then remove $e$ and add a new leg decorated by $(1,\rho)$ and attached it to $v$; otherwise we remove $e$, $v$, and
any other legs incident to $v$.

For $e\in E_\infty(\ga)$, in case $e$ is incident to a
stable $v\in V_\infty^S(\ga)$, or in case $e$ is incident to an unstable $v\in V_0^U(\ga)$ so that another edge in
$E(\ga)$ is also incident to
$v$, we then remove $e$ and add a new leg decorated by $\gamma_{(e,v)}$\footnote{We assign $\gamma_{(e,v)}=(1,\varphi)$
in case $d_e\in\ZZ$, otherwise $\gamma_{(e,v)}=e^{-2\pi\sqrt{-1} d_e}$ (cf. before Defi.\,\ref{graph2}).}
and attached it to $v$; otherwise we remove $e$, $v$, and
any other legs incident to $v$. After performing these operations to all $e$ in $E_0(\ga)$ and $E_\infty(\ga)$, and after
discarding all vertices in $V_1(\ga)$, we obtain a new graph $\ga'$. Let $\{\ga_i\}$ be the connected components of
$\ga'$.

Applying the discussion \cite[Section 3]{CLLL2} to this situation, we conclude that if $[\cW_{\ga'}]\virtloc=0$,
then $[\cW_\ga]\virtloc=0$. By our assumption on $\ga$, we know that all $\ga_i$ in $\{\ga_i\}$
are non loop and bare; at least one such $\ga_i$ is irregular. Because
$$[\cW_{\ga'}]\virtloc=\prod[\cW_{\ga_i}]\virtloc,
$$
applying Lemma \ref{red-lem}, we have that $[\cW_{\ga'}]\virtloc=0$. This proves the proposition.
\end{proof}

\begin{coro}\label{van34}
In case $\Ga$ consists of a single stable vertex $v\in V_\infty(\Ga)$ 
such that its legs are decorated by
$\gamma_1,\cdots,\gamma_\ell\in\bmu_5-\{1\}$ and that at least one $\gamma_i\in
\{\zeta_5^3,\zeta_5^4\}$, then $[\cW_\Ga]\virtloc=0$, except when $g_v=0$ and
$\gamma=(1^{1+k}23)$ or $=(1^{2+k}4)$, for a $k\ge 0$.
\end{coro}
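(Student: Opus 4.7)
The plan is to derive this as a direct consequence of Proposition \ref{reduction1}. I will verify that, outside the two listed exceptional cases, the graph $\Ga$ meets every hypothesis of that proposition; then the conclusion is immediate.

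First I would check the combinatorial hypotheses. Since $\Ga$ has a single vertex and no edges, both the set $N(\Ga)$ of (potentially $T$-unbalanced) nodes and the set $E_{0\infty}(\Ga)$ of strings are empty; in particular $\Ga$ is trivially flat, i.e.\ $\Ga\in\Delta^\fl$, and contains no strings. Moreover $\Ga$ is not a pure loop: a pure loop has no legs, while the hypothesis that some $\gamma_i\in\{\zeta_5^3,\zeta_5^4\}$ forces $\ell\ge 1$. So it remains only to show that $\Ga$ is irregular whenever the pair $(g_v,\gamma)$ avoids the excluded patterns.

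Next I would inspect Definition \ref{regu}. The unique vertex $v\in V_\infty(\Ga)$ is stable, so clause (2) of Definition \ref{regu} is vacuous and only clause (1) matters. Accordingly $v$ is regular precisely when either (i) $v$ is exceptional, i.e.\ $g_v=0$ and $\gamma_v=(1^{1+k}23)$ or $(1^{2+k}4)$ for some $k\ge 0$, or (ii) every $\gamma_a$ for $a\in S_v$ lies in $\{\zeta_5,\zeta_5^2\}$ (with $E_v=\emptyset$ the condition on edges is empty). The hypothesis that at least one $\gamma_i\in\{\zeta_5^3,\zeta_5^4\}$ rules out (ii). Hence, outside the two exceptional patterns listed in the statement, $v$ is irregular, and therefore so is $\Ga$.

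With these checks in place, Proposition \ref{reduction1} applies and yields $[\cW_\Ga]\virtloc=0$. I do not anticipate any analytical obstacle; the only bookkeeping subtlety is to confirm that for a single-vertex graph the notions of $N(\Ga)^{\mathrm{un}}$, strings, and pure loops collapse to the trivial cases, and that the exceptional patterns $(1^{1+k}23)$, $(1^{2+k}4)$ in the exception clause of the corollary match exactly the definition of exceptional vertex in the text. The substance of the corollary is therefore just the translation of the combinatorial characterization of regular single stable vertices in $V_\infty$ into the explicit list of $\gamma$-types that must be excluded.
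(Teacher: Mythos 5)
Your proof is correct and follows the intended route: the paper places the corollary immediately after Proposition~\ref{reduction1} with no separate argument, precisely because the only content is the bookkeeping you carry out (a one-vertex graph with $\ell\ge 1$ legs is automatically flat, not a pure loop, and string-free, and clause (1) of Definition~\ref{regu} together with the hypothesis that some $\gamma_i\in\{\zeta_5^3,\zeta_5^4\}$ shows $v$, hence $\Ga$, is irregular exactly outside the two exceptional patterns, which match the definition of an exceptional vertex verbatim). Nothing is missing.
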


\section{Reduction to no-string cases}
\def\ba{{\mathbf a}}
\def\cDggpri{\cD_{\ga',\nu}}

The proof of the general case is by reduction to no-string cases.
To this end, we introduce the operation {\sl trimming a leaf edge} from a graph.\footnote{A leaf edge
is an edge so that one of its vertex is unstable and has only one edge attached to it.}

\begin{defi}\label{-string}
Let $\Ga\in\Delta^\fl$ and let $e\in E_{0\infty}(\Ga)$ be a string (thus a leaf edge).
Let $v_-\in V_0(\Ga)$ and $v_+\in V_\infty(\Ga)$ be its vertices.
The trimming of $e$ from $\Ga$ is by first removing $e$,  $v_-$ and all legs attached to $v_-$, and
then attaching a leg, called the distinguished leg, decorated by $(1,\varphi)$ to $v_+$.
\end{defi}

In the following, we will use the induction on the number of strings to prove Theorem \ref{main}.
We fix a $\Ga$ with a string $e$, and its two associated vertices $v_\pm$, as in Definition \ref{-string}.
We assume $[\cW_{\ga}]\virtloc\ne 0$, and shall derive a contradiction in the end.
We denote by $\ga'$ the graph after trimming $e$ from $\Ga$.

Like before, let $\cDgg$ be the stack of $\ga$-framed gauged curves $((\sC,\Si^\sC,\sL,\sN,\nu), \eps)$.
For any family $(\cC, \Si^{\cC}, \cL,\cN,\nu)$ (with $\eps$ implicitly understood) in $\cDgg$,
because $e$ is a string of $\Ga$, the correspondence $a=(e,v_+)\in F(\Ga)$
associates to a section of nodes $\cR_{a}\sub\cC$ that splits off a family of rational curves $\cC^e\sub\cC$
(associated with $e$),
called the $e$-tail of $\cC$.\footnote{ Fibers of $\cC^e$ can be one $\Po$, or a union of two $\Po$'s.
See Remark \ref{flat}.}
We let
$$\cC^\diamond=\overline{\cC-\cC^e}\sub \cC
$$
be the complement of $\cC^e$ in $\cC$.

We consider the family
\beq\label{ecirc}
(\cC^\diamond, \Si^\cC\cap \cC^\diamond+ \cR_a,\cL|_{\cC^\diamond},\cN|_{\cC^\diamond},\nu|_{\cC^\diamond}).
\eeq
Together with the induced framing, it is a family in $\cDggpri$. As this construction is canonical, we obtain a
forgetful morphism
$$\cDgg\lra \cDggpri.
$$

We need another stack, of elements in $\cDgg$ paired with fields on its $e$-tail.
Given $(\sC,\Si^\sC,\sL,\sN,\nu)\in\cDgg$, we abbreviate
$$\sL^{\log}=\sL(-\Si^\sC\lophi),\and \sP^{\log}=\sL^{\vee\otimes 5}\otimes\omega_\sC^{\log}(-\Si^\sC\lorho).
$$

\begin{defi}\label{partial-e} Let $(\sC,\Si^\sC,\sL,\sN,\nu)\in\cDgg$. 
A $(\varphi,\rho)$-field on its $e$-tail is 
$$(\varphi^e,\rho^e)=(\varphi^e_1,\cdots,\varphi^e_5,\rho^e)\in H^0(\sL^{\log}|_{\sC^e})^{\oplus 5}\oplus
H^0(\sP^{\log}_{\sC^e}).
$$
A partial $e$-field on a $\ga$-framed gauged curve consists of a
$\zeta\in \cDgg$ and a $(\varphi,\rho)$-field on its $e$-tail.
\end{defi}

We let $\cY_{\ga,\nu,e}$ be the groupoid of families of partial $e$-fields on $\ga$-framed gauged curves.
That is, elements in $\cY_{\ga,\nu,e}$ are $(\sC,\Si^\sC,\sL,\sN,\nu, \varphi^e,\rho^e)$ (with the $\ga$-framing
implicitly understood) as in Definition
\ref{partial-e}.

Let $(\cC, \Si^{\cC}, \cL,\cN,\varphi,\rho,\nu)$ to be the
universal family on $\cW_\ga$. Like before, the flag $a=(e,v_+)\in F(\Ga)$
associates to a section of nodes $\cR_{a}\sub\cC$, which splits $\cC$ into two
subfamilies $\cC^e$ and $\cC^{e\circ}$.
The family
$$(\cC, \Si^{\cC}, \cL,\cN,\nu,\varphi|_{\cC^e},\rho|_{\cC^e})
$$
then is a family in $\cYgg$, which induces a forgetful morphism
$\delta: \cWgg\to \cYgg$. Of course, by forgetting the fields on the $e$-tail, we obtain a
forgetful morphism $\zeta: \cYgg\to \cDgg$.

To proceed, we let $\bar e$ be the graph which is $e$ with two vertices $v_-$ and $v_+$,
together with the decorations on $e$ and the legs on $v_-$ (if any), plus a new leg
decorated by $1=\zeta_5^0$ attached to $v_+$. Note that because of the decoration $1$, $\bar e$
is of broad type
(cf. the first paragraph in section 2.1).

We let $\cWge$ and  $\cWee$ be the moduli stack of stable
$\ga'$ and $\bar e$-framed MSP fields, respectively. By restricting the universal family of $\cYgg$ to its
$e$-tails, we obtain a family on $\cC^e$, which induces a morphism $\cYgg\to \cWee$.
We list these morphisms together:
\beq\label{00} \cW_\ga\mapright{\delta} \cYgg\lra \cWee
\and \cYgg\mapright{\zeta} \cDgg.
\eeq

What we would like to have is that via restricting the universal family of $\cW_\ga$ to $\cC^\diamond$ we
obtain a family in $\cW_{\Ga'}$, thus getting a morphism from $\cW_\ga$ to $\cW_{\ga'}$.
Unfortunately, this in general is not true because $\varphi|_{\cR_{a}}$
might not vanish identically, thus does not necessarily induces a morphism
$\cW_\ga\to\cW_{\ga'}$. (Recall $\cR_a$ associates to a marking of $\ga'$ labeled by $(1,\varphi)$.)

To remedy this, we let $\cWee^\mu=(\cWee)\lred$ be $\cWee$ with the reduced stack structure; let
\beq\label{WY}
\cYgg^\mu=\cYgg\times_{\cWee}\cWee^\mu, \and \cW_\ga^\mu=\cW_\ga\times_{\cWee}\cWee^\mu.
\eeq

\begin{lemm}\label{factII}
The stack $\cWee$ has pure dimension four; it has hypersurface singularities, and is acted on by
the group $GL(5,\CC)$. The coarse moduli of $\cWee^\mu=(\cWee)_\redd$ is isomorphic to $\Pf$, and the
induced $GL(5,\CC)$ action on this coarse moduli is the standard $GL(5,\CC)$ action on $\Pf$.
\end{lemm}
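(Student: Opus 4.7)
The plan is to analyze $\cWee$ by directly writing down its objects. Since $e$ is a string in $E_{0\infty}(\ga)$, the underlying curve $\sC_e$ is a rational tail: either an irreducible twisted $\mathbb{P}^1$, or a union $\sC_{e-}\cup\sC_{e+}$ of two $\mathbb{P}^1$'s as in Definition~\ref{def-ga}(4). In either case, Corollary~\ref{coro} (applicable since we will eventually assume $[\cW_\ga]\virtloc\ne 0$) combined with Lemma~\ref{unstable-q} forces $d_e=0$, and hence $\sL|_{\sC_e}\cong\sO_{\sC_e}$. Moreover the prescribed $T$-weights coming from the graph data, together with $\nu_1|_{v_-}=0$, $\nu_2|_{v_+}=0$, and the nowhere-vanishing conditions on $(\varphi,\nu_1)$ and $(\rho,\nu_2)$, determine $\sN|_{\sC_e}$ up to isomorphism and pin down $\nu_1,\nu_2$ up to unique $T$-equivariant scalings of $\sN$. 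Thus no moduli come from $(\sC_e,\sL,\sN,\nu_1,\nu_2)$: all moduli sit in the fields $(\varphi,\rho)$.

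Second, I would identify the $\varphi$-moduli with $\Pf$. Since $\sL\cong\sO$ and the $(1,\varphi)$-decoration appears only if the incident leg already demands $\varphi$ vanish there, the space $H^0(\sL^{\log})$ is $1$-dimensional, so $\varphi=(\varphi_1,\dots,\varphi_5)$ is a constant vector in $\CC^5$. The stability requirement that $(\varphi,\nu_1)$ be nowhere vanishing, coupled with $\nu_1|_{v_-}=0$, shows $\varphi\neq 0$. The residual $\CC^\ast$-automorphism that rescales $\sL$ (and is compensated in $\rho$ and the $\varphi_i$) identifies $\varphi$ with its projective class $[\varphi_1{:}\cdots{:}\varphi_5]\in\Pf$. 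This produces a bijective morphism of coarse moduli from the reduced stack $\cWee^\mu$ onto $\Pf$, and the natural $GL(5,\CC)$-action on $\cWee$ sending $(\varphi_1,\dots,\varphi_5)\mapsto A\cdot(\varphi_1,\dots,\varphi_5)$ manifestly descends to the standard linear $GL(5,\CC)$-representation on $\Pf$.

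Third, I would explain the dimension and hypersurface singularity assertions. A Riemann--Roch computation on $\sC_e$, using $\deg\sL^{\vee\otimes 5}\otimes\omega_{\sC_e}^{\log}=-(\text{small nonnegative})$, shows that $H^0(\sP^{\log})$ and $H^1(\sP^{\log})$ differ in rank by exactly the $\Pf$-dimension balance, so that $\rho$ is constrained to a locus cut out by a single equation of the form $\rho\cdot W(\varphi)=0$ (where $W$ is the Fermat quintic relation, inherited from the cosection $\sigma$). After taking the reduced structure one recovers $\rho=0$ and $[\varphi]\in\Pf$, but before reduction the equation $\rho\cdot W(\varphi)=0$ in one auxiliary variable gives a hypersurface, proving both the pure dimension four assertion and the hypersurface-singularity assertion. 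The $GL(5,\CC)$-equivariance of this whole description is automatic since the construction only used the $GL(5,\CC)$-equivariant datum $\varphi\in H^0(\sL^{\log})^{\oplus 5}$ and the invariant cosection.

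The main obstacle will be the bookkeeping for the reducible case $\sC_e=\sC_{e-}\cup\sC_{e+}$, where one must verify separately that the $\nu_1,\nu_2$-trivializations on the two components glue compatibly and that the degree of $\sP^{\log}$ on each component yields the same $H^0$ count as in the irreducible case; this requires carefully tracking the $T$-weights and the balancing across the intermediate node. Once that case analysis is complete, the remaining assertions about $\Pf$, the hypersurface structure, and the $GL(5,\CC)$-action are essentially formal consequences of the computation above.
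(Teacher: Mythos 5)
Your proposal has two genuine gaps, and the first one undermines the main computation.

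First, you allow $\sC_e$ to be irreducible and carry out the key computation in that case, writing $\sL|_{\sC_e}\cong\sO_{\sC_e}$ and deducing $H^0(\sL^{\log})\cong\CC$ so that $\varphi$ is a constant vector in $\CC^5$. But the irreducible case never occurs, and the paper's proof opens by establishing exactly this. If $\sC_e\cong\Po$ is irreducible with $\deg\sL=0$, then since $p_+$ is a $(1=\zeta_5^0)$-marking and $p_-$ is either a non-marking or a $(1,\rho)$-marking, $\sP^{\log}=\sL^{\vee\otimes 5}\otimes\omega^{\log}_{\sC}(-\Si^\sC_{(1,\rho)})\cong\sO_{\Po}(-1)$, forcing $\rho=0$; but $p_+\in\sC_\infty$ is the zero locus of $\nu_2$, so Definition~\ref{def-curve}(5) requires $\rho(p_+)\ne 0$, a contradiction. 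So every closed point of $\cWee$ has $\sC_e=\sC_-\cup\sC_+$ reducible, with a middle node, and $\deg\sL|_{\sC_\pm}=\mp c$ for some $c\in\ZZ_{+}$. Your deduction $\sL\cong\sO$, and everything downstream of it (the identification $H^0(\sL^{\log})\cong\CC$, the description of $\varphi$ as a constant), is therefore carried out in a case that is empty, and the genuine case requires tracking $\varphi|_{\sC_-}\in H^0(\sO_{\sC_-}(c))^T$ together with the vanishing $\varphi|_{\sC_+}=0$, as the paper does. What you call "the main obstacle" (the reducible case) is in fact the only case.

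Second, the hypersurface-singularity argument is wrong in approach. You assert that $\cWee$ is cut out by an equation of the form $\rho\cdot W(\varphi)=0$ "inherited from the cosection $\sigma$," but the moduli functor for MSP fields imposes no polynomial relation involving the Fermat quintic; the cosection $\sigma$ is a map $\Ob_\cW\to\sO_\cW$ on the obstruction sheaf whose degeneracy locus cuts out the proper substack $\cW^-$, not a defining equation of the moduli stack itself. Concretely, points with $W(\varphi)\ne 0$ and $\rho|_{\sC_+}\ne 0$ lie in $\cWee$, so your proposed equation does not vanish on $\cWee$. The paper instead computes directly that at every closed point $H^1(\sL\otimes\sN\otimes\bL_1)^T=H^1(\sN)^T=H^1(\sL^{\log})^T=0$ while $H^1(\sP^{\log})=\CC$; since the obstruction space is everywhere one-dimensional and the (pure) dimension is four, the tangent space is five-dimensional, exhibiting $\cWee$ locally as one equation in a smooth five-fold. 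Your Riemann--Roch heuristic does not produce this; you need the actual deformation-theoretic computation on the reducible curve $\sC_-\cup\sC_+$.

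The parts of your plan that do match the paper are the identification of the $\varphi$-parameter with $[\varphi_1(p_-):\cdots:\varphi_5(p_-)]\in\Pf$ (once the reducible structure is in place) and the observation that $GL(5,\CC)$ acts on $\varphi$ by matrix multiplication, descending to the standard action on $\Pf$.
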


\begin{proof}
We begin with classifying the closed points
of $\cWee$. Let $\xi=(\sC,\Si^\sC,\cdots)\in \cWee$ be a closed point, let $\Ga_\xi$ be its associated
graph. We claim that $\Ga_\xi\ne \Ga_\xi^\fl$. Indeed, in case $\Ga_\xi= \Ga_\xi^\fl$, then
$\sC\cong \Po$ and $T$ acts on $\sC$ with two fixed points, $p_-$ and $p_+$, associated with
the vertices $v_-\in V_0(\Ga_\xi)$ and $v_+\in V_\infty(\Ga_\xi)$, respectively.
Because we have assumed that $[\cW_\ga]\virtloc\ne 0$, by Corollary \ref{coro}, we have
$\deg\sL=0$. Since $p_+$ is a marking decorated by $1$, and $p_-$ is either a non-marking
or a marking decorated by $(1,\rho)$, we have $\omega_\sC^{\log}(-\Si^{\sC}_{(1,\rho)})\cong
\sO_{\Po}(-1)$, forcing $\rho=0$, contradicting to $\rho|_{p_+}\ne 0$. This proves $\Ga_\xi\ne \Ga_\xi^\fl$.

In case $\Ga_\xi\ne \Ga_\xi^\fl$, it contains two edges: $e_+\in E_{\infty}(\ga_\xi)$ and
$e_-\in E_{0}(\Ga_\xi)$. Let $\sC_{\pm}\sub\sC$ be the irreducible component associated with $e_\pm$.
Then $\sC=\sC_-\cup \sC_+$ with one node $q$, associated with the vertex in $V_1(\Ga_\xi)$. Let $p_\pm
\in \sC_\pm\sub\sC$ be the two $T$ fixed points (other than $q$), as before. Then by the definition of MSP fields, we have
$\sN|_{\sC_-}$ and $\sL\otimes\sN|_{\sC_+}$ are trivial. Adding $\deg\sL=0$ and $\deg\sN=c$, where $c=
d_{\infty e}$, we get $\sL|_{\sC_-}\cong \sO_{\sC_-}(c)$, $\sL|_{\sC_+}\cong \sO_{\sC_+}(-c)$
and $\sN|_{\sC_+}\cong \sO_{\sC_+}(c)$. Consequently,
\beq\label{van10}
\varphi|_{\sC_+}=\rho|_{\sC_-}=0,
\eeq
and because $\varphi|_{p_-}$ and $\rho|_{p_+}$ are non-trivial,
$$ \varphi|_{\sC_-}\in H^0(\sO_{\sC_-}(c)^{\oplus 5})^T-0\cong \CC^5-0,\quad
\rho|_{\sC_+}\in H^0(\sO_{\sC_-})^T-0\cong \CC-0.
$$
Adding that $\nu_1$ and $\nu_2$ are
non-trivial and unique up to scaling ($T$-equivariant), we see that $\xi$ is uniquely parameterized by
$$[\varphi_1(p_-),\cdots,\varphi_5(p_-)]\in\Pf.
$$
Repeating a family version of this argument, we prove that
the coarse moduli of $\cWee^\mu$ is isomorphic to $\Pf$.

The mentioned $GL(5,\CC)$ action on $\cWee$ is the obvious one. Given any family
in $\cWee$, which is given by $(\cC,\Si^\cC,\cL,\cN, \varphi, \rho,\nu)$, we define
$\sigma\cdot (\cC,\Si^\cC,\cL,\cN, \varphi, \rho,\nu)$ to be
$(\cC,\Si^\cC,\cL,\cN, \sigma\cdot\varphi, \rho,\nu)$, where
$\sigma\cdot\varphi$ is the standard matrix multiplication after viewing $\varphi$ as a column vector
with exponent $\varphi_i$, and viewing $\sigma$ as a $5\times 5$ invertible matrix. This defines a
$GL(5,\CC)$ action on $\cWee$, and its action on the coarse moduli of $(\cWee)\lred\cong\Pf$
is the standard action of $GL(5,\CC)$ on $\Pf$.

Finally, we prove that $\cWee$ has hypersurface singularity. For this, we first calculate
the tangent space and the obstruction space of $\cWee$ at its closed points.
Let $\xi=(\sC,\Si^\sC,\sL,\cdots)$ be a closed point of $\cWee$. As argued before, $\sC=\sC_-\cup\sC_+$,
with $\deg\sL|_{\sC_\pm}=\mp c$ for a $c\in\ZZ_+$, $\deg\sN|_{\sC_-}=0$ and $\deg\sN|_{\sC_+}=c$.
A direct calculation shows that
$$H^1(\sL\otimes\sN\otimes\bL_1)^T=H^1(\sN)^T=H^1(\sL^{\log})^T=0,\and H^1(\sP^{\log})=\CC.
$$
This shows that the obstruction space to deformations of $\xi\in \cWee$ is always one dimensional.
Because $\cWee$ has pure dimension $4$, we conclude that
$\dim T_\xi\cWee=5$, that $\cWee$ is locally defined by
one equation in a smooth $5$-fold, and thus $\cWee$ has hypersurface singularities.
\end{proof}

We now compare the stacks $\cW_\ga^\mu$, $\cYgg^\mu$, etc.. We first show that the family \eqref{ecirc} together with
$(\varphi,\rho)|_{\cC^\diamond}$ 
defines a morphism
\beq\label{pi1}\cW_\ga^\mu\defeq \cW_\ga\times_{\cYgg}\cYgg^\mu\lra \cW_{\ga'}.
\eeq
Indeed, by the prior discussion, it suffices to show that
\beq\label{11}\varphi|_{\cR_{a}\times_{\cYgg}\cYgg^\mu}=0.
\eeq
By the vanishing $\varphi|_{\sC_+}=0$ in \eqref{van10}, the
$\varphi$-field of any closed $\xi\in \cW_{\bar e}$ restricted to $v_+\in\sC$ vanishes.
This shows that \eqref{11} holds, and the morphism \eqref{pi1} exists.

Next, by definition the composite morphism $\cWgg^\mu\to \cWgg\to 
\cWee$ (cf. \eqref{00})
factors through $\cWgg^\mu\to \cWee^\mu$.
Paired with \eqref{pi1}, we obtain a morphisn $\beta$ as shown: 
\beq\label{Diag}
\begin{CD}
\cW_\ga^\mu @>\beta>> \cWge\times \cWee^\mu \\ 
@VVV@VVV\\ 
\cYgg^\mu @>\beta' >> \cD_{\ga',\nu}\times\cWee^\mu \\
\end{CD}
\and
\begin{CD}
\cYgg @>{R_e}>> \cW_{\bar e}\\
@VVV @VVV\\
\cDgg @>{r_{e}}>> \cD_{\bar e,\nu}.
\end{CD}
\eeq
The other arrows in \eqref{Diag} are as follows.
Let $\cC_{\cYgg}$ be the domain curve of the universal family of $\cYgg$. Because curves in the family
$\cC_{\cYgg}$ are $\ga$-framed, it contains a distinguished section of nodes
$\cR_a\sub\cC_{\cYgg}$, where $a=(e,v_+)$, which splits out the $e$-tails $\cC_{\cYgg}^e$ of
$\cC_{\cYgg}$.
The universal family of $\cYgg$ restricted to $\cC_{\cYgg}^e$ induces the morphism
$R_e: \cYgg\to\cWee$. The similar construction gives $r_e$, as shown. 
Next, by removing the $\varphi^e$ and $\rho^e$ from the universal family of $\cYgg$ and then restricting the
remainder part to $\cC_{\cYgg}^\diamond$, we obtain a family in $\cD_{\ga',\nu}$, which defines a morphism
$\cYgg^\mu\to \cD_{\ga',\nu}$. Paired with the tautological $\cYgg^\mu\to\cWee^\mu$, we obtain the $\beta'$
in \eqref{Diag}. By constructions, these two squares are commutative.

\begin{lemm}\label{lem5.4}
The horizontal arrows in \eqref{Diag} are smooth; The morphisms $\beta$ is a $\bmu_5$-torsors,
and the square involving $R_e$ and $r_e$ is Cartesian.
\end{lemm}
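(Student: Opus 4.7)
My plan is to verify each of the four assertions by analyzing the fibers of the morphisms as gluing problems at the distinguished node $q$ joining $\sC^\diamond$ and the $e$-tail $\sC^e$, with the gluing freedom controlled by the vanishing or non-vanishing of the MSP fields at $q$.

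I would first check that the right square in \eqref{Diag} is Cartesian by direct unpacking of definitions. An object of $\cY_{\ga,\nu,e}$ is a $\ga$-framed gauged curve $(\sC,\Si^\sC,\sL,\sN,\nu)$ paired with fields $(\varphi^e,\rho^e)$ on $\sC^e$, while an object of $\cD_{\ga,\nu}\times_{\cD_{\bar e,\nu}}\cW_{\bar e}$ is such a gauged curve together with a $\bar e$-framed MSP field on $\sC^e$ whose underlying gauge data agrees; the extra data beyond the gauge part of the latter is precisely $(\varphi^e,\rho^e)$. Next I would establish smoothness of $r_e$: its fiber over a $\bar e$-gauged curve parametrizes $\ga'$-framed gauged curves on $\sC^\diamond$ plus a gluing of $(\sL,\sN)$ at $q$ subject to $\lambda_\sL\lambda_\sN=1$ from the $\nu_1$-matching, which is an open substack of a torus bundle over $\cD_{\ga',\nu}$ and hence smooth. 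Smoothness of $R_e$ then follows by base change along the Cartesian square. Smoothness of $\beta'$ follows from a parallel analysis: its fiber over a compatible pair in $\cD_{\ga',\nu}\times\cW_{\bar e}^\mu$ is a $\GG_m$ parametrizing the gluing $(\lambda_\sL,\lambda_\sN)$ subject only to $\lambda_\sL\lambda_\sN=1$, since $\cD_{\ga',\nu}$ carries no $(\varphi,\rho)$-field.

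The substantive claim is that $\beta$ is a $\bmu_5$-torsor. Fix $(\xi',\eta)\in\cW_{\ga'}\times\cW_{\bar e}^\mu$ in the image of $\beta$; the fiber of $\beta$ parametrizes the gluings of $\xi'$ and $\eta$ into a $\Ga$-framed MSP field in $\cW_\ga^\mu$. A priori this gluing data is a pair $(\lambda_\sL,\lambda_\sN)\in\GG_m^2$ identifying $\sL$ and $\sN$ at $q$, and I would analyze the field-matching constraints as follows. The $\varphi$-matching is vacuous since $\varphi|_q=0$ on both sides (on $\sC^\diamond$ by the $(1,\varphi)$-marking at $q$, on $\sC^e$ by $\varphi|_{\sC_+}=0$ from the proof of Lemma \ref{factII}); the $\nu_2$-matching is vacuous since $q$ lies in $\sC_\infty$; the $\nu_1$-matching gives $\lambda_\sL\lambda_\sN=1$ because $\nu_1|_q\ne 0$; and the $\rho$-matching forces $\lambda_\sL^{-5}=1$, since $\rho|_q\ne 0$ on $\sC^e$ by Lemma \ref{factII} after passing to $\cW_{\bar e}^\mu$, propagating to the $\sC^\diamond$-side by continuity of $\rho$ across $q$. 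Combined, $\lambda_\sL\in\bmu_5$ and $\lambda_\sN=\lambda_\sL^{-1}$, so each fiber is canonically a $\bmu_5$-torsor. The globalization into a $\bmu_5$-action on $\cW_\ga^\mu$ is by twisting the gluing at $q$ by $\zeta\in\bmu_5$: the resulting MSP field is valid because $\varphi|_q$ and $\nu_2|_q$ vanish, $\nu_1|_q$ is preserved since $\zeta\cdot\zeta^{-1}=1$, and $\rho|_q$ is preserved since $\zeta^{-5}=1$; it is distinct from the original because the $\Ga$-framing rigidifies the components enough to detect the twist.

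The main obstacle I anticipate is ensuring that the $\rho$-matching really contributes the $\bmu_5$-constraint rather than being vacuous or inconsistent. This requires careful use of Lemma \ref{factII} to confirm $\rho|_q\ne 0$ on $\sC^e$ after passing to the reduced locus $\cW_{\bar e}^\mu$, a continuity argument across $q$ to propagate the non-vanishing to $\sC^\diamond$, and finally the verification that the pointwise $\bmu_5$-action assembles into a bona fide torsor structure in families rather than holding only fiberwise.
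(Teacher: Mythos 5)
Your argument follows the same strategy as the paper's brief proof: the $\nu_1$-matching determines the gluing of $\sL\otimes\sN$ at $q$, the $\rho$-matching determines the gluing of $\sL^{\vee\otimes 5}$, and hence the $\sL$-gluing is fixed up to $\bmu_5$; you also supply the smoothness and Cartesian-square verifications that the paper dismisses as ``similar.'' The one stylistic point is that invoking ``continuity of $\rho$ across $q$'' is a detour: the non-vanishing of $\rho$ at the distinguished marking on the $\sC^\diamond$-side is intrinsic to $\xi'\in\cW_{\ga'}$, because that marking lies on $\sC_\infty$ (where $\nu_2\equiv 0$, so $(\rho,\nu_2)$ nowhere-zero forces $\rho\neq 0$) and is decorated by $(1,\varphi)$ rather than $(1,\rho)$, so nothing kills $\rho$ there; this makes the $\rho$-constraint a property of each factor of the target of $\beta$, independent of any gluing.
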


\begin{proof}
We prove that $\beta$ is a $\bmu_5$-torsor. Following the construction, we see that $\beta$ surjective.
We now show that it is a $\bmu_5$-torsor. Indeed, given any closed point
$$z=((\sC',\Si^{\sC'},\sL,\cdots),(\sC^e,\Si^{\sC^e},\sL^e,\cdots))\in \cW_{\ga'}\times\cWee^\mu,
$$
any point in $\beta\upmo(z)$ is by gluing $\sC'$ and $\sC^e$ along the markings in $\sC'$ and $\sC^e$
associated to $(e_+,v_+)$, and gluing the $\sL$'s and $\sN$'s on $\sC'$ and $\sC^e$.
As the marking is a scheme point, the
gluing of markings is unique. Because the section $\nu_1$ is non-vanishing at the markings, the gluing of
$\sL\otimes\sN$ is also unique. On the other hand, the gluing of $\sL$ is constrained by the non-vanishing of
$\rho$'s. As $\rho$ restricted to the marking to be glued, it is a section of $\sL^{\vee\otimes 5}$. Thus
the gluing of $\sL$ are unique up to $\bmu_5$. As this argument works for family,
this shows that $\beta$ is a $\bmu_5$-torsor.

The other conclusions can be proved similarly, and will be omitted.
\end{proof}

Following \cite[Prop.\,2.5]{CL} as before,
we endow $\cWge$ and $\cWee$ their tautological perfect relative obstruction theories, relative to
$\cDgg$ and $\cD_{\bar e}$, respectively. For $\cWee$, as it is proper by Lemma \ref{factII}, we let $[\cWee]\virt\in A\lsta \cWee$
be its virtual class. For $\cWge$, like $\cW_\ga$, we form its standard cosection
$\sigma_{\ga',\nu}: \Ob_{\cWge/\cD_{\ga',\nu}}\to \sO_{\cWge}$,
which is liftable to a cosection of $\Ob_{\cWge}$. Let
$\cW^-_{\ga'}\sub \cWge$ be its degeneracy locus (with reduced structure),
and let $[\cWge]\virtloc\in A\lsta \cW^-_{\ga'}$ be its
associated cosection localized virtual class.

We let
$$\cW_\ga^{\sim}=\cW_\ga^\mu\times_{\kappa, \cW_{\ga'}}\cW_{\ga'}^-\sub \cW_\ga^\mu,
$$
where $\kappa: \cW_\ga^\mu\mapright{\beta} \cW_{\ga'}\times\cWee^\mu\mapright{\pr}\cW_{\ga'}$
is the composite. We let
\beq\label{kappa}
\ti\kappa: \cW_\ga^{\sim}\lra \cW^-_{\ga'}
\eeq
be induced by $\kappa$. Because $\beta$ is a $\bmu_5$-torsor, $\ti\kappa$ is flat. Because $\cWee^\nu$ is
proper, $\kappa$ is a proper morphism.

\begin{prop}\label{reduction} The stack $\cW_\ga^\sim$ is proper, and contains $\cW_\ga^-$ as
its closed substack. Let $\jmath: \cW_\ga^{-}\to \cW_\ga^\sim$ be the inclusion.
Then there is a rational $c\in\QQ$ such that
$$\jmath\lsta[\cW_\ga]\virtloc=c\cdot \ti\kappa\sta [\cW_{\ga'}]\virtloc\in A\lsta(\cW^\sim_{\ga}).
$$
\end{prop}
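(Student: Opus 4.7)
\smallskip
\noindent\textbf{Proof proposal.}
The plan is to establish properness and the inclusion $\cW_\ga^-\subset \cW_\ga^\sim$ from the structural results already proved, and then obtain the virtual class identity by decomposing the relative obstruction theory of $\cW_\ga/\cDgg$ along the splitting $\cC=\cC^\diamond\cup\cC^e$ of the universal curve. For properness, Lemma \ref{factII} shows the coarse moduli of $\cWee^\mu$ is $\Pf$, hence $\cWee^\mu$ is proper; $\cW^-_{\ga'}$ is proper by the general results of \cite{CLLL}. Since $\beta$ is a $\bmu_5$-torsor by Lemma \ref{lem5.4}, the morphism $\kappa:\cW_\ga^\mu\to\cW_{\ga'}$ factors as a proper \'etale cover followed by projection from $\cW_{\ga'}\times\cWee^\mu$, so the fibre product $\cW_\ga^\sim$ is proper. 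For the inclusion, any $\xi\in\cW_\ga^-$ has its restriction $\xi|_{\cC^\diamond}$ defining a point of $\cW_{\ga'}$; the cosection on $\cW_\ga$ is additive with respect to the component decomposition, so its vanishing on $\xi$ forces vanishing of the cosection on $\xi|_{\cC^\diamond}$, giving a point of $\cW^-_{\ga'}$, and hence of $\cW_\ga^\sim$.

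For the virtual class identity, the strategy is to use the factorization $\cW_\ga\to\cYgg\to\cDgg$ combined with the Cartesian square involving $R_e$ and $r_e$ in \eqref{Diag}. The relative obstruction theory $\EE_{\cW_\ga/\cDgg}=R\pi^T_\ast\cU$ splits via the short exact sequence associated with $\cC=\cC^\diamond\cup\cC^e$ (with gluing along the section $\cR_a$, which is a scheme marking), producing a distinguished triangle
$$
R\pi^T_\ast(\cU|_{\cC^\diamond})\oplus R\pi^T_\ast(\cU|_{\cC^e})\lra R\pi^T_\ast\cU\lra \cU|_{\cR_a}\mapright{+1}.
$$
The first summand is the pullback of the relative obstruction theory of $\cW_{\ga'}/\cDggpri$, while the second is the pullback of the relative obstruction theory of $\cWee/\cD_{\bar e,\nu}$ via $R_e$. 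Because the cosection $\sigma_{\cW_\ga}$ respects this decomposition (it is a sum of a cosection on the $\cC^\diamond$-part and one on the $\cC^e$-part), arguments analogous to those in Proposition \ref{idcycle} together with functoriality of cosection-localized Gysin maps (cf.\ \cite[Prop.\,2.9]{CL}, \cite[Prop.\,3]{KKP}) produce the compatibility
$$
[\cW_\ga^\mu]\virtloc \;=\; \beta^\ast\bl [\cW_{\ga'}]\virtloc\times [\cWee^\mu]\virt\br
$$
up to the torsor degree factor $\tfrac{1}{5}$. Pushing forward via $\jmath$ on $\cW_\ga^-\subset\cW_\ga^\sim$ and using the projection from $\cW_\ga^\sim\to\cW^-_{\ga'}$, the identity follows with $c$ equal to $\tfrac{1}{5}$ times the degree of the (zero-dimensional component of the) virtual class $[\cWee^\mu]\virt$ on the proper coarse moduli $\Pf$.

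The main obstacle will be the virtual class comparison across the gluing locus $\cR_a$. One must show that the distinguished triangle above is compatible with cosections, and that the node-gluing term $\cU|_{\cR_a}$ does not contribute extra obstruction beyond a smooth factor absorbed into the Artin base $\cDgg$. This requires a careful analysis, parallel to the one in Proposition \ref{idcycle}, verifying commutativity of the squares relating $\phi\dual_{\cW_\ga/\cDgg}$ to the product of the analogous relative obstruction theories for $\cW_{\ga'}/\cDggpri$ and $\cWee/\cD_{\bar e,\nu}$. A secondary subtlety is that $\cWee$ is non-reduced with hypersurface singularities: one must work with $\cWee^\mu=(\cWee)_\redd$ and verify that the virtual class of $\cWee$ pushes forward to a well-defined rational multiple of $[\Pf]$ (or more precisely a rational number after cosection localization against the proper substack), which is precisely the source of the rational constant $c\in\QQ$ in the statement.
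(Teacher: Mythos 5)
Your high-level plan---exploit the Cartesian square in \eqref{Diag}, split the obstruction theory along the node $\cR_a$, and track cosections---is the right one, but several concrete steps are either wrong or missing, and the mechanism producing the constant $c$ is misidentified.

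First, the distinguished triangle you write down is not the one that works, and its arrows point the wrong way. The normalization sequence of $\cC = \cC^\diamond\cup_{\cR_a}\cC^e$ gives $R\pi^T_\ast\cU\to R\pi^T_\ast(\cU|_{\cC^\diamond})\oplus R\pi^T_\ast(\cU|_{\cC^e})\to R\pi^T_\ast(\cU|_{\cR_a})\to[+1]$, not the triangle you state. What the paper actually uses (see \eqref{dia1}) is the triangle coming from $\cU|_{\cC^\diamond}(-\cR_a)\to\cU\to\cU|_{\cC^e}$, which has the virtue of producing a genuine relative obstruction theory $\EE_{\cWgg/\cYgg}=R\pi^T_\ast\bigl(\cU|_{\cC^\diamond}(-\cR_a)\bigr)$ for $\cW_\ga\to\cYgg$, with $\delta^\ast\EE_{\cYgg/\cDgg}=R\pi^T_\ast(\cU|_{\cC^e})$ the third term. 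Your version does not split the theory in a way that isolates a relative theory for a morphism of stacks, which is what feeds the virtual pullback machinery.

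Second, your identification of $c$ is wrong. There is no product formula $[\cW_\ga^\mu]\virtloc=\beta^\ast\bigl([\cW_{\ga'}]\virtloc\times[\cWee^\mu]\virt\bigr)$ in play, and the final constant is \emph{not} $\frac{1}{5}\cdot\deg[\cWee^\mu]\virt$. The argument does not compute the virtual class of $\cWee$ at all. Instead, the crucial technical input is Lemma \ref{factII}: $\cWee$ has \emph{pure dimension four equal to its expected dimension} and \emph{hypersurface (l.c.i.) singularities}, so its intrinsic normal cone $\fC_{\cWee/\cD_{\bar e,\nu}}$ coincides with the full bundle stack $\fA_{\bar e}=h^1/h^0(\EE_{\cWee/\cD_{\bar e,\nu}})$ (\eqref{Fe}). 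Pulling back along the Cartesian square gives $\fC_{\cYgg/\cDgg}=\fB$. The constant $c$ then appears as the multiplicity of the non-reduced $\cWee$ along its reduction $\cWee^\mu$: since $\fA_{\bar e}^\mu$ is a bundle stack over an irreducible base, $[\fA_{\bar e}]=c\cdot[\fA_{\bar e}^\mu]$ as cycles, and hence $[\fC_{\cYgg/\cDgg}]=c\cdot[\fB^\mu]$. The $\bmu_5$-torsor structure of $\beta$ is used to get flatness of $\ti\kappa$; it contributes no factor of $\tfrac{1}{5}$. Once the cone comparison is in place, Manolache's virtual pullback $f^!_\loc$ (Proposition \ref{vpb}), together with the blow-up construction $\cD_{\ga',\nu,\diamond}$ and the functoriality diagrams \eqref{Xdia}--\eqref{diaX2} identifying $[\fC_{\cW^\mu_\ga/\cYgg^\mu}]=\theta^\ast[\fC_{\cWge/\cD_{\ga',\nu}}]$, delivers the identity. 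You gesture at the functoriality issue but do not address the blow-up construction, which is precisely what handles the gluing across $\cR_a$.

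Third, your inclusion argument is logically incorrect. ``Additivity of the cosection'' does not imply that vanishing of the sum forces vanishing of either summand. What the paper proves (Lemma \ref{deg-comp}) is that there is a \emph{surjective} map $r:\Ob_{\cWgg/\cYgg}|_\xi\twoheadrightarrow\Ob_{\cWge/\cD_{\ga',\nu}}|_{\xi^\diamond}$ making the square with the two cosections commute; since $r$ is surjective, $\ti\sigma_{\ga,\nu}|_\xi=0$ forces $\sigma_{\ga',\nu}|_{\xi^\diamond}=0$, giving $D(\ti\sigma_{\ga,\nu})\sub\kappa^{-1}(D(\sigma_{\ga',\nu}))$. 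This surjectivity comes from the exact sequence \eqref{inj3}, not from any additivity.
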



We prove Theorem \ref{main} assuming Proposition \ref{reduction}.

\begin{proof}[Proof of Theorem \ref{main}]
Let $\Ga\in\Delta^\fl$ be irregular and not a pure loop. In case $\ga$ has no strings, then the vanishing
follows from Proposition \ref{reduction1}.

Now assume $\Ga$ has strings. 
Let $e$ be a string of $\ga$, and let $\ga'$ be the result after trimming $e$ from $\ga$.
In case $\Ga'=\emptyset$,
by Lemma \ref{unstable-q}, the marking $\sC_{v_+}$ is a scheme marking of type $(1,\phi)$. Thus
$\vir\dim \cW_\ga =\vir\dim\cW_{\bar e} -5 =3-5< 0$, implying $[\cW_\ga]\virtloc=0$.

Otherwise $\ga'\in\Delta^\fl$ is irregular, not a pure loop, and has one less string than that of $\ga$.
Thus by induction, we have $[\cWge]\virtloc\sim 0$.
By Proposition \ref{reduction}, we get $\jmath\lsta[\cWgg]\virtloc\sim 0$.
Namely, there is a proper substack $\cZ'$, $\cW_{\ga'}^-\sub \cZ'\sub\cW_{\ga'}$, so that the cycle
$[\cWge]\virtloc$ pushed to $A\lsta \cZ'$ is zero. Let $\cZ=\kappa\upmo(\cZ')$.
Since $\kappa$ is proper, $\cZ$ is proper. Also, $\cW_\ga^-\sub\cZ$. Then Theorem \ref{reduction}
implies that the pushforward of $[\cW_\ga]\virtloc$ to $A\lsta\cZ$ is zero. This proves
$[\cWgg]\virtloc\sim 0$.
\end{proof}

\section{Proof of Proposition \ref{reduction}}

We continue to denote by $\delta:\cW_\ga\to\cYgg$ the (representable) morphism induced by restriction.
The relative obstruction theory of $\cYgg\to\cDgg$ pullback to $\cWgg$ takes the form
$$\delta\sta\phi\dual_{\cYgg/\cDgg}: \delta\sta\TT_{\cYgg/\cDgg}\lra \delta\sta \EE_{\cYgg/\cDgg}=R\pi_{\ast}^{\Gm}
(\cU|_{\cC^e}).
$$
Here $\cC^e$ and $\cC^\diamond\sub\cC$ are the two families of subcurves (of the universal curve
$\cC$ of $\cW_\ga$) after decomposing along $\cR_{a}$, where $a=(e,v_+)$; $\cU$ is defined in
\eqref{cU}. We let
$$\EE_{\cWgg/\cYgg}=R\pi_{\ast}^\Gm \bl \cU|_{\cC^\diamond}(-\cR_a)\br.
$$

%
%
%
%

Recall $\EE_{\cW_\ga/\cDgg}= R\pi\lsta^T\cU$.
Using the exact sequence $\cU|_{\cC^\diamond}(-\cR_a)\to \cU\to \cU|_{\cC^e}$
and the pair $\delta: \cW_\ga\to\cYgg$, we form the top and the bottom d.t.s
\beq\label{dia1}
\begin{CD}
\EE_{\cWgg/\cYgg}@>\alpha>> \EE_{\cWgg/\cDgg} @>\beta>>\delta\sta \EE_{\cYgg/\cDgg}@>+1>>\\
@AA{\ti\phi\dual_{\cWgg/\cYgg}}A@AA{\phi\dual_{\cWgg/\cDgg}}A@AA{\delta\sta\phi\dual_{\cYgg/\cDgg}}A\\
\TT_{\cWgg/\cYgg}@>\ti\alpha>>  \TT_{\cWgg/\cDgg} @>\ti\beta>>\delta\sta \TT_{\cYgg/\cDgg} @>+1>>
\end{CD}
\eeq
where the second and the third vertical arrows are the perfect obstruction theories constructed by direct image cones, and the square is commutative because of Proposition \ref{functorial0}. We let
$\ti\phi\dual_{\cWgg/\cYgg}$ be the one making \eqref{dia1} a morphism of d.t.s. Applying Five-Lemma,
it is also a perfect obstruction theories.


%
%
%
\smallskip

Let $\sigma_{\ga,\nu}$ be the cosection of $\Ob_{\cWgg/\cDgg}$ mentioned after Definition \ref{def3.1}; let
\beq\label{tilsi}
\ti\sigma_{\ga,\nu}\defeq \sigma_{\ga,\nu}\circ H^1(\alpha):\Ob_{\cWgg/\cYgg}\lra \sO_{\cWgg}.
\eeq

\begin{lemm}\label{deg-comp}
The degeneracy locus 
$D(\ti\sigma_{\ga,\nu})=\{\xi\in\cW_\ga\mid \ti\sigma_{\ga,\nu}|_\xi=0\}$ 
is proper. 
\end{lemm}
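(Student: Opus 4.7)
The plan is to identify $D(\tilde\sigma_{\ga,\nu})$ as a closed substack of the preimage of a proper stack under a proper morphism, and thereby inherit properness.

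First, I will unpack $\tilde\sigma_{\ga,\nu}$ via the distinguished triangle \eqref{dia1}. The short exact sequence $0\to \cU|_{\cC^\diamond}(-\cR_a)\to \cU\to \cU|_{\cC^e}\to 0$ gives $\EE_{\cWgg/\cYgg}=R\pi\lsta^T\bl\cU|_{\cC^\diamond}(-\cR_a)\br$, so $H^1(\alpha)$ is the map sending an obstruction class on $\cC^\diamond$ (vanishing at $\cR_a$) to its image in $R^1\pi\lsta^T\cU$. Hence $\tilde\sigma_{\ga,\nu}|_\xi$ is just the standard MSP cosection $\sigma_{\ga,\nu}|_\xi$ restricted to obstructions supported on $\cC^\diamond$. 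Since $\sigma_{\ga,\nu}$ is the Serre-dual pairing with the $1$-jet $dW=(5\rho\varphi_i^4,\sum\varphi_i^5)$ of the superpotential $W=\rho\sum\varphi_i^5$, the condition $\tilde\sigma_{\ga,\nu}|_\xi=0$ forces the relations $\rho\cdot\varphi_i^4=0$ and $\sum\varphi_i^5=0$ as sections on $\cC^\diamond$, modulo boundary contributions at $\cR_a$.

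Second, I extract the geometric consequence. The node $\cR_a$ lies on $\sC_{v_+}\sub\sC_\infty$ (since $v_+\in V_\infty(\ga)$), so by condition (5) of Definition \ref{def-curve} combined with $\nu_2|_{\sC_\infty}=0$, the field $\rho$ is nowhere vanishing near $\cR_a$. The relation $\rho\cdot\varphi_i^4=0$ then forces $\varphi_i|_{\cR_a}=0$, which by Lemma \ref{factII} places the $e$-tail restriction of $\xi$ in the reduced locus $\cWee^\mu$; hence $\xi\in\cWgg^\mu$ and the morphism $\kappa:\cWgg^\mu\to\cWge$ from \eqref{pi1} is defined on all of $D(\tilde\sigma_{\ga,\nu})$. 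Moreover, the cosection of $\cWge$ at $\kappa(\xi)$ is, by construction, identified with the restriction of $\sigma_{\ga,\nu}|_\xi$ to obstructions on $\cC^\diamond$, which is exactly $\tilde\sigma_{\ga,\nu}|_\xi$; so $\kappa(\xi)\in\cW_{\ga'}^-$, and $\kappa$ carries $D(\tilde\sigma_{\ga,\nu})$ into $\cW_{\ga'}^-$.

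Third, I conclude properness. By the main properness theorem of \cite{CLLL} applied to the MSP moduli whose numerical data is that of $\ga'$, the stack $\cW_{\ga'}^-$ is proper; by Lemma \ref{factII} the stack $\cWee^\mu$ (with coarse moduli $\Pf$) is also proper. By Lemma \ref{lem5.4} the morphism $\beta:\cWgg^\mu\to \cWge\times\cWee^\mu$ is a $\bmu_5$-torsor, hence proper, so $\beta\upmo(\cW_{\ga'}^-\times\cWee^\mu)=\kappa\upmo(\cW_{\ga'}^-)$ is proper. The second step gives $D(\tilde\sigma_{\ga,\nu})\sub\kappa\upmo(\cW_{\ga'}^-)$, and as the degeneracy locus of a cosection it is closed in $\cWgg$; hence $D(\tilde\sigma_{\ga,\nu})$ is proper.

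The main obstacle will be the Serre-duality analysis in the first step: in the $T$-equivariant setting with the extra $\nu$-gradings, one must track carefully how $H^1(\alpha)$ realizes the ``supported on $\cC^\diamond$, vanishing at $\cR_a$'' description, and then verify that vanishing of the restricted pairing genuinely yields the pointwise vanishing $\varphi_i|_{\cR_a}=0$ rather than a weaker boundary condition that would fail to place the $e$-tail in $\cWee^\mu$.
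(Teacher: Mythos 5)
The overall strategy is the same as the paper's: exhibit $D(\ti\sigma_{\ga,\nu})\subset\kappa^{-1}(D(\sigma_{\ga',\nu}))$ and appeal to properness of $\kappa$ (from properness of $\cWee^\mu$ via $\beta$) and of $D(\sigma_{\ga',\nu})$. However, the central step is not carried out correctly.

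The paper's argument at the key step is sharper and cleaner than yours: it exhibits a \emph{surjection}
$r:\Ob_{\cWgg/\cYgg}|_\xi\twoheadrightarrow\Ob_{\cWge/\cD_{\ga',\nu}}|_{\xi^\diamond}$
and a commutative triangle $\ti\sigma_{\ga,\nu}|_\xi=\sigma_{\ga',\nu}|_{\xi^\diamond}\circ r$, from which
$\ti\sigma_{\ga,\nu}|_\xi=0\Rightarrow\sigma_{\ga',\nu}|_{\xi^\diamond}=0$ follows immediately from surjectivity. You instead assert that the two cosections are ``identified,'' which is not accurate: the two obstruction spaces are genuinely different (they agree on the $\varphi$-summand, but the $\rho$-summand of $\Ob_{\cWgg/\cYgg}|_\xi$ carries an extra twist by $-\cR_a$, reflected in the short exact sequence \eqref{inj3}), and the map between them is a quotient, not an isomorphism. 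Without noting that $r$ is surjective --- which uses $H^1(\sP^{\log}|_{\sR_a})=0$ --- your factorization of cosections does not give the implication you want: a priori, a functional on $\Ob_{\cWgg/\cYgg}|_\xi$ vanishing could fail to descend to a vanishing functional on a proper quotient. This is the missing idea.

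Your ``geometric consequence'' paragraph is also a red herring. Under the section's running assumption $[\cW_\ga]\virtloc\neq 0$, the vanishing $\varphi_i|_{\cR_a}=0$ holds for \emph{every} closed point $\xi\in\cW_\ga$, by the classification in the proof of Lemma~\ref{factII} (cf.~\eqref{van10}); it does not need to be extracted from the cosection condition, and in fact the route through Serre duality / ``$\rho\varphi_i^4=0$ modulo boundary terms at $\cR_a$'' is not a clean way to derive it. This vanishing is what makes $\xi^\diamond$ a legitimate point of $\cW_{\ga'}$ (the new marking is $(1,\varphi)$-typed), not what places $\kappa(\xi)$ in the degeneracy locus. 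Also, the distinction between $\cW_\ga$ and $\cW_\ga^\mu$ is scheme-theoretic (a reduction), so a condition that a closed point ``lands in $\cWee^\mu$'' is automatic and does not actually constrain anything; the work of restricting to the reduced substack appears later, in Proposition~\ref{reduction}, not here.
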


\begin{proof}
The construction of $\sigma_{\ga,\nu}$ is as in \cite{CLL}, where it is proved that it lifts to $\bar\sigma_{\ga,\nu}
: \Ob_{\cW_\ga}\to\sO_{\cW_\ga}$ (cf.
\cite[Prop.\,3.4]{CLL}).

We now show that $D(\ti\sigma_{\ga,\nu})$ is proper.
Let $\xi\in \cWgg$ be a closed point, represented by $\xi=(\sC,\Si^\sC,\cdots,\nu)$. Let
$\sR_a\sub\sC$ be the node associated with $a=(e,v^+)\in F(\ga)$, which decomposes $\sC$ into subcurves
$\sC^\diamond$ and $\sC^e$.
By the description of the obstruction theory of $\cWgg\to\cYgg$, 
$$\Ob_{\cWgg/\cYgg}|_\xi=H^1\bl \sL^{\log}|_{\sC^\diamond}(-\sR_a)^{\oplus 5}\oplus
\sP^{\log}|_{\sC^\diamond}(-\sR_a)\br^T,
$$
where $\sL^{\log}$ and $\sP^{\log}$ are as defined before \eqref{cU}.

Let
$$\xi^\diamond\defeq (\sC^\diamond, \Si^{\sC^\diamond}=\Si^{\sC}\cap\sC^\diamond+\sR_a,
\sL|_{\sC^\diamond},\cdots, \nu_2|_{\sC^\diamond}),
$$
where the marking $\sR_a$ is decorated by $(1,\varphi)$. Then $\xi^\diamond$ is a point in $\cWge$.
Following the construction of the obstruction theory of $\cW_\ga/\cD_{\ga,\nu}$,
we see that
$$\Ob_{\cWge/\cD_{\ga',\nu}}|_{\xi^\diamond}=
H^1\bl \sL|_{\sC^\diamond}(-\Si_{(1,\varphi)}^{\sC^\diamond})^{\oplus 5}\oplus
\sL^{\vee\otimes 5}|_{\sC^\diamond}\otimes\omega_{\sC^\diamond}^{\log}(-\Si_{(1,\rho)}^{\sC^\diamond})\br^T.
$$
Because of the identities
$$\sP|_{\sC^\diamond}=
\sL|_{\sC^\diamond}^{\vee\otimes5}\otimes\omega_{\sC^\diamond}^{\log},\quad
\Si^\sC_{(1,\rho)}|_{\sC^\diamond}=\Si^{\sC^\diamond}_{(1,\rho)},\and
\Si^\sC_{(1,\varphi)}|_{\sC^\diamond}+\sR_a=\Si^{\sC^\diamond}_{(1,\varphi)},
$$
we have
$\sL(-\Si_{(1,\varphi)}^\sC)|_{\sC^\diamond}(-\sR_a)=\sL|_{\sC^\diamond}(-\Si_{(1,\varphi)}^{\sC^\diamond})$, and
the exact sequence
\beq\label{inj3}
0\lra
\sP^{\log}
|_{\sC^\diamond}(-\sR_a)\mapright{\text{}}
\sL|_{\sC^\diamond}^{\vee \otimes 5}\otimes\omega_{\sC^\diamond}^{\log}(-\Si_{(1,\rho)}^{\sC^\diamond})
\lra \sP^{\log}|_{\sR_a}\lra 0.
\eeq
Therefore we get the induced surjective
\beq\label{cinc}
r: \Ob_{\cWgg/\cYgg}|_\xi \mapright{} \Ob_{\cWge/\cD_{\ga',\nu}}|_{\xi^\diamond}.
\eeq
By the definition of the cosections $\sigma_{\ga,\nu}|_\xi$ and $\sigma_{\ga',\nu}|_{\xi^\diamond}$,
we see that (cf. \eqref{tilsi})
$$\begin{CD}
\Ob_{\cWgg/\cYgg}|_\xi @>{r}>> \Ob_{\cWge/\cD_{\ga',\nu}}|_{\xi^\diamond}\\
@VV{\ti\sigma_{\ga,\nu}|_\xi}V @VV{\sigma_{\ga',\nu}|_{\xi^\diamond}}V \\
\CC @= \CC
\end{CD}
$$
is commutative.  
Therefore, $\ti\sigma_{\ga,\nu}|_\xi=0$ implies that
$\kappa(\xi)\in D(\sigma_{\ga',\nu})$. (cf. $\kappa:\cW_\ga^\mu\to \cW_{\ga'}$ is defined before \eqref{kappa}).
This proves that
$$D(\ti\sigma_{\ga,\nu})\sub\kappa\upmo(D(\sigma_{\ga',\nu})).
$$
As $D(\sigma_{\ga',\nu})$ is proper (\cite{CLLL}) and $\kappa$ is proper,
$D(\ti\sigma_{\ga',\nu})$ is proper.
\end{proof}

We let
$$\fC_{\cWgg/\cYgg}\sub h^1/h^0\bl \TT_{\cWgg/\cYgg}\br\sub \fA_{\ga,e}\defeq h^1/h^0(\EE_{\cWgg/\cYgg})
$$
be the virtual normal cone (cf. \cite{BF}).
Following \cite{KL}, the cosection $\ti\sigma_{\ga,\nu}$ defines a bundle stack homomorphism
$\ti\sigma_{\ga,\nu}: \fA_{\ga,e}\to \sO_{\cWgg}$.
We let $\fA_{\ga,e}(\ti\sigma_{\ga,\nu})\sub\fA_{\ga,e}$ be the kernel stack of $\ti\sigma_{\ga,\nu}$,
which is a closed substack of $\fA_{\ga,e}$ defined via
\beq\label{tisi2}
\fA_{\ga,e}(\ti\sigma_{\ga,\nu})
\defeq \coprod_{\xi\in\cW_\ga}\ker\{\ti\sigma_{\ga,\nu}|_\xi: \fA_{\ga,e}|_\xi\lra\CC\},
\eeq
endowed with the reduced stack structure.

\begin{lemm}\label{deg-van}
We have
$( \fC_{\cWgg/\cYgg})_\redd\sub \fA_{\ga,e}(\ti\sigma_{\ga,\nu})$.
\end{lemm}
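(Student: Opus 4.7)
The plan is to reduce the containment to the already established cosection property of $\sigma_{\ga,\nu}$ on $\Ob_{\cWgg/\cDgg}$, exploiting the factorization $\ti\sigma_{\ga,\nu}=\sigma_{\ga,\nu}\circ H^1(\alpha)$ from \eqref{tilsi} together with the distinguished triangle of relative obstruction theories in \eqref{dia1}.

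First, both sides are closed substacks of $\fA_{\ga,e}$ with reduced structure, and the right hand side is defined fiberwise over $\cWgg$ in \eqref{tisi2}. Hence it suffices to check the inclusion pointwise: for every closed point $\xi\in\cWgg$, the fiber $(\fC_{\cWgg/\cYgg})\lred|_\xi$ lies in $\ker(\ti\sigma_{\ga,\nu}|_\xi)\sub \fA_{\ga,e}|_\xi$.

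Second, the relevant Kiem-Li cosection property for the reference cosection $\sigma_{\ga,\nu}$ is already known: the construction of $\sigma_{\ga,\nu}$ via the MSP superpotential (as in \cite{CLL} and recalled in the proof of Lemma \ref{deg-comp}) is precisely such that the virtual normal cone $\fC_{\cWgg/\cDgg}$ is contained in the kernel stack of $\sigma_{\ga,\nu}$ inside $h^1/h^0(\EE_{\cWgg/\cDgg})$. This is exactly what makes the localized virtual cycle $[\cW_\ga]\virt_{\loc,\ga,\nu}$ of Proposition \ref{idcycle} well-defined.

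Third, the factorization $\cWgg\to \cYgg\to\cDgg$ gives rise, via the d.t.\ \eqref{dia1} (and the parallel d.t.\ of cotangent complexes $\LL_{\cWgg/\cYgg}\to\LL_{\cWgg/\cDgg}\to \delta\sta\LL_{\cYgg/\cDgg}\to +1$), to a morphism of abelian cone stacks
$$
\alpha\lsta:\fA_{\ga,e}=h^1/h^0(\EE_{\cWgg/\cYgg})\lra h^1/h^0(\EE_{\cWgg/\cDgg}).
$$
By the standard functoriality of intrinsic normal cones under a composition (Behrend-Fantechi; see also Manolache's virtual pullback), the intrinsic normal cone of $\cWgg\to\cYgg$ maps into the intrinsic normal cone of $\cWgg\to\cDgg$, and transporting along the two perfect obstruction theories via \eqref{dia1} gives
$$
\alpha\lsta\bl\fC_{\cWgg/\cYgg}\br\sub \fC_{\cWgg/\cDgg}.
$$
Combining these three steps, for any closed $\xi\in\cWgg$ and any $[v]\in (\fC_{\cWgg/\cYgg})|_\xi$ one has $\alpha\lsta[v]\in \fC_{\cWgg/\cDgg}|_\xi$, whence
$$
\ti\sigma_{\ga,\nu}|_\xi([v])=\sigma_{\ga,\nu}|_\xi(\alpha\lsta[v])=0.
$$
This yields the desired pointwise inclusion.

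The main obstacle I anticipate is the third step: pinning down the functoriality $\alpha\lsta(\fC_{\cWgg/\cYgg})\sub \fC_{\cWgg/\cDgg}$ cleanly in the present framework. The intuition is straightforward---a geometric deformation of $\xi$ that exists over $\cYgg$ and has obstruction to lifting to $\cWgg$ gives rise, under the same lift problem viewed over $\cDgg$, to the obstruction class in the $\cWgg/\cDgg$-cone---but stating it rigorously requires invoking the compatibility of intrinsic normal cones with compositions, together with the commutativity of the square in \eqref{dia1}. Once that is in place, the remaining argument is formal.
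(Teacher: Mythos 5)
Your proof is correct and follows essentially the same route as the paper: both invoke the Kiem--Li containment $(\fC_{\cWgg/\cDgg})_\redd\sub\fA_\ga(\sigma_{\ga,\nu})$, push the cone $\fC_{\cWgg/\cYgg}$ forward into $\fC_{\cWgg/\cDgg}$ via the functoriality of the $h^1/h^0$ construction applied to the d.t.\ \eqref{dia1}, and then conclude from the factorization $\ti\sigma_{\ga,\nu}=\sigma_{\ga,\nu}\circ H^1(\alpha)$. The functoriality step you flag as the main obstacle is precisely what the paper supplies, via the commutative diagram \eqref{h1h0} built from \eqref{dia1}.
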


\begin{proof}
Let
$$\fC_{\cWgg/\cDgg}\sub h^1/h^0\bl \TT_{\cWgg/\cDgg}\br\sub \fA_\ga \defeq h^1/h^0(\EE_{\cWgg/\cDgg})
$$
be the similarly defined virtual normal cone. By \cite{KL},
$$(\fC_{\cWgg/\cDgg})_\redd \sub \fA_\ga(\sigma_{\ga,\nu}),
$$
where $\fA_\ga(\sigma_{\ga,\nu})\sub \fA_\ga$ is the kernel stack of $\sigma_{\ga,\nu}$.
Applying the functoriality of the $h^1/h^0$ construction to \eqref{dia1},
we obtain the commutative diagram
\beq\label{h1h0}
\begin{CD}
\fC_{\cWgg/\cYgg}@>\subset>>h^1/h^0(\TT_{\cWgg/\cYgg})@>\subset>>\fA_{\ga,e}=h^1/h^0(\EE_{\cWgg/\cYgg})\\
@VVV @VVV@VV{h^1/h^0(\alpha)}V \\
\fC_{\cWgg/\cDgg}@>\subset>>h^1/h^0(\TT_{\cWgg/\cDgg}) @>\subset>> \fA_\ga=h^1/h^0(\EE_{\cWgg/\cDgg}).\\
\end{CD}
\eeq
Because $(\fC_{\cWgg/\cDgg})_\redd\sub \fA_\ga(\sigma_{\ga,\nu})$, via the definition of $\ti\sigma_{\ga,\nu}$
(cf. \eqref{tilsi}) we conclude
$(\fC_{\cWgg/\cYgg})_\redd\sub \fA_{\ga,e}(\ti\sigma_{\ga,\nu})$.
\end{proof}


Our next step is to use the virtual pullback of  \cite[Def.\,2.8]{CKL} (also \cite[Constr.\,3.6]{Man}) to
re-express the cycle $[\cWgg]\virtloc$.
For this, we need a description of the virtual normal cone of $\cYgg\to \cDgg$:
\beq\label{Be}
\fC_{\cYgg/\cDgg}\subseteq h^1/h^0\bl \TT_{\cYgg/\cDgg}\br\subseteq {\fB}\defeq h^1/h^0(\EE_{\cYgg/\cDgg}).
\eeq
We show that the identities in \eqref{Be} hold.

Indeed, by Lemma \ref{factII}, $\cWee$ has pure dimension $4$, equaling the expected dimension of
$\cWee$, and has local complete intersection singularities, the intrinsic normal cone $\fC_{\cWee/\cD_{\bar e,\nu}}$
equals the bundle stack $\fA_{\bar e}$, shown below.
\beq\label{Fe}
\fC_{\cWee/\cD_{\bar e,\nu}}=\fA_{\bar e}\defeq h^1/h^0(\EE_{\cWee/\cD_{\bar e,\nu}}).
\eeq
Because the second square in \eqref{Diag} is a Cartesian
square, we have
\beq\label{BBB}
\fC_{\cYgg/\cDgg}=\fC_{\cWee/\cD_{\bar e,\nu}}\times_{\cWee}\cYgg=\fA_{\bar e}\times_{\cWee}\cYgg
=\fB.
\eeq

\smallskip

We form Cartesian products and projections as shown
\beq\label{pipi}
\begin{CD}
\bAB\defeq \fA_{\ga,e}\times_{\cYgg}\fB @>{\pi_2}>> \cW_{\ga|\fB}\defeq \cW_\ga\times_{\cYgg}\fB @>>> \fB\\
@VVV@VV{\pi_1}V @VVV\\
\fA_{\ga,e} @>{\beta}>> \cW_\ga @>>> \cYgg.
\end{CD}
\eeq
Note that $\pi_2$ is the pullback of $\beta$ via $\pi_1$.
Viewing $\ti\sigma_{\ga,\nu}: \fA_{\ga,e}\to \sO_{\cWgg}$ as a bundle-stack homomorphism, its pullback
$$\pi_1\sta(\ti\sigma_{\ga,\nu}): \bAB\lra \sO_{\cW_{\ga|\fB}}
$$
is a bundle-stack homomorphism too. 
Its degeneracy locus then is 
\beq\label{WWW}
D(\pi_1\sta(\ti\sigma_{\ga,\nu}))
=D(\ti\sigma_{\ga,\nu})\times_{\cYgg}{\fB}\sub \cW_{\ga|\fB},
\eeq
and its associated kernel stack $\bAB(\pi_1\sta\ti\sigma_{\ga,\nu})$ (cf. \eqref{tisi2}) is
$$\bAB(\pi_1\sta\ti\sigma_{\ga,\nu})=
\fA(\ti\sigma_{\ga,\nu})\times_{\cWgg} \cW_{\ga|\fB}\sub \bAB.
$$
We denote the inclusion by $\iota$:
\beq\label{iota2}
\iota: (\fC_{\cW_{\ga|\fB}/{\fB}})\lred \sub (\fC_{\cWgg/\cYgg}\times_{\cYgg} \fB)\lred \sub
\bAB(\ti\sigma_{\ga,\nu}),
\eeq
where the first inclusion follows from the definition of $\cW_{\ga|\fB}$, and the second follows from Lemma
\ref{deg-van}.

To proceed, let us recall the virtual pullbacks introduced in \cite{Man}. 
Following \cite{Man}, we form the composite:
\beq\label{fsha}
f^!: A\lsta \fB\mapright{\eps}A\lsta \bAB\mapright{0^!_{\pi_2}} A\lsta \cW_{\ga|\fB}\mapright{0^!_{\pi_1}}
A\lsta \cW_\ga.
\eeq
Here the arrow $\eps$ is defined as follows. Let
$\bar\eps': Z\lsta\fB\to Z\lsta(\fC_{\cW_{\ga|\fB}/{\fB}})$ be the linear map defined
via $\bar\ep'([V]) =[\fC_{V\times_{\fB}\cW_{\ga|\fB}/V}]$.
Since $\cWgg$ is a DM stack,
both $\cWgg\to \cYgg$ and $\cW_{\ga|\fB}\to {\fB}$ are of DM type. Applying the proof of
\cite[Thm.\,2.31]{Man} to
\cite[Constr.\,3.6]{Man}, we conclude that $\bar \eps'$ descends to the $\bar\eps$ in \eqref{sha}.
Let $\bar\iota\lsta:A\lsta \fC_{\cW_{\ga|\fB}/{\fB}} \to
A\lsta\bAB$ be induced by the inclusion \eqref{iota2}. We define $\eps$ be the composite
\beq\label{sha}
\eps: A\lsta\fB\mapright{\bar\eps} A\lsta(\fC_{\cW_{\ga|\fB}/{\fB}})\mapright{\bar\iota\lsta} A\lsta\bAB.
\eeq
The arrows $0^!_{\pi_1}$ and $0^!_{\pi_2}$ in \eqref{fsha} are
Gysin maps after intersecting with the zero sections of the bundle stacks $\pi_1$ and $\pi_2$, respectively.
\smallskip

The version we will use is the localized analogue of \eqref{fsha}:
\beq\label{shaloc}
f^!\lloc: A\lsta\fB\mapright{\ti\ep}
A\lsta(\bAB(\pi\sta_1\ti\sigma_{\ga,\nu}))\mapright{0^!_{\pi_2,\loc}} A_{\ast}(D(\pi_1\sta\ti\sigma_{\ga,\nu}))
\mapright{0^!_{\pi_1}} A_{\ast}(D(\ti\sigma_{\ga,\nu})).
\eeq
By \eqref{iota2}, the $\ep$ in \eqref{fsha} (cf. \eqref{sha}) factors through
$A\lsta(\bAB(\ti\sigma_{\ga,\nu}))$, giving the $\ti\eps$ in \eqref{shaloc}.
Since $D(\pi\sta_1 \ti\si_{\ga,\nu})$ is proper, the last arrow $0^!_{\pi_1}$ is the ordinary Gysin map of the
bundle-stack $\pi_1$.

\begin{prop}\label{vpb} 
Let $\jmath: D(\sigma_{\ga,\nu})\to D(\ti\sigma_{\ga,\nu})$ be the inclusion, then
$$f\lloc^![\fC_{\cYgg/\cDgg}]=\jmath\lsta [\cWgg]\virtloc\in A\lsta (D(\ti\sigma_{\ga,\nu})).
$$
\end{prop}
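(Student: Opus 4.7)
The plan is to prove Proposition \ref{vpb} as the cosection-localized analogue of Manolache's functoriality for virtual pullbacks, applied to the tower $\cWgg \to \cYgg \to \cDgg$. Since \eqref{BBB} identifies $\fC_{\cYgg/\cDgg}$ with the ambient bundle stack $\fB$, one has $[\fC_{\cYgg/\cDgg}] = [\fB]$, and the statement reduces to evaluating $f^!\lloc[\fB]$ through the three arrows of \eqref{shaloc} and matching the outcome with $\jmath\lsta[\cWgg]\virtloc$.

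First I would unwind each arrow in \eqref{shaloc} applied to $[\fB]$. By the definition of $\bar\eps$ preceding \eqref{sha}, $\ti\eps([\fB]) = [\fC_{\cW_{\ga|\fB}/\fB}]$, and this class lies in $A\lsta(\bAB(\pi_1\sta\ti\sigma_{\ga,\nu}))$ because $\pi_1\sta\ti\sigma_{\ga,\nu}$ restricts fiberwise to $\ti\sigma_{\ga,\nu}$ and Lemma \ref{deg-van} places $\fC_{\cW_{\ga|\fB}/\fB} \subseteq \pi_1\upmo(\fC_{\cWgg/\cYgg})$ inside the kernel substack of $\pi_1\sta\ti\sigma_{\ga,\nu}$. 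Applying the Kiem--Li cosection-localized Gysin map $0^!_{\pi_2,\loc}$ of the bundle stack $\pi_2$ produces a class in $A\lsta(D(\pi_1\sta\ti\sigma_{\ga,\nu}))$, and the ordinary Gysin map $0^!_{\pi_1}$ of the bundle stack $\pi_1$ then delivers the answer in $A\lsta(D(\ti\sigma_{\ga,\nu}))$; the properness needed for this last step is supplied by \eqref{WWW} together with Lemma \ref{deg-comp}.

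The core identity to establish is
\[
0^!_{\pi_1}\bl 0^!_{\pi_2,\loc}[\fC_{\cW_{\ga|\fB}/\fB}]\br \;=\; 0^!\lloc[\fC_{\cWgg/\cDgg}] \;\in\; A\lsta(D(\ti\sigma_{\ga,\nu})),
\]
where the right-hand side is the cosection-localized Gysin map of the bundle stack $\fA_\ga \to \cWgg$ with cosection $\ti\sigma_{\ga,\nu}$, applied to $[\fC_{\cWgg/\cDgg}]$; by \eqref{ab-cW} and the compatibility $\ti\sigma_{\ga,\nu} = \sigma_{\ga,\nu}\circ H^1(\alpha)$ this equals $\jmath\lsta[\cWgg]\virtloc$. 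Two ingredients enter. (i) In the non-localized setting, the Costello--Kresch--Manolache functoriality of virtual normal cones under the triangle \eqref{dia1} (see \cite[Constr.\,3.6]{Man} and \cite[Prop.\,3]{KKP}) identifies $[\fC_{\cW_{\ga|\fB}/\fB}]$ with the pullback of $[\fC_{\cWgg/\cDgg}]$ along $\pi_1$ up to rational equivalence inside $\bAB$, so that $0^!_{\pi_1}\circ 0^!_{\pi_2}$ applied to it agrees with $0^!_{\fA_\ga}[\fC_{\cWgg/\cDgg}]$. (ii) Since the cosections $\pi_1\sta\ti\sigma_{\ga,\nu}$ and $\ti\sigma_{\ga,\nu}$ match under $\pi_1$ and both descend from $\sigma_{\ga,\nu}$ via $H^1(\alpha)$, the localized Gysin map $0^!_{\pi_2,\loc}$ commutes with the ordinary Gysin map $0^!_{\pi_1}$, reducing the computation to the non-localized functoriality (i) after restricting supports to the appropriate degeneracy loci.

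The main obstacle is the cosection-localized version of ingredient (i): the deformation of cones that realizes Manolache's functoriality in the tower must be carried out inside the kernel substacks of $\pi_1\sta\ti\sigma_{\ga,\nu}$ and $\sigma_{\ga,\nu}$. This rests on the input, used in Lemma \ref{deg-van}, that $\fC_{\cWgg/\cDgg}$ already lies in $\fA_\ga(\sigma_{\ga,\nu})$, combined with the properness of $D(\ti\sigma_{\ga,\nu})$ from Lemma \ref{deg-comp}, which ensures that every intermediate cycle in the deformation remains within loci where the cosection-localized Gysin map is defined. Once these are verified, the two stated equalities combine to complete the proof.
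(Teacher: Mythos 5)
Your approach is essentially the same route the paper takes: both prove the proposition by invoking the cosection-localized virtual pullback machinery for the tower $\cWgg\to\cYgg\to\cDgg$. The paper's actual proof is a one-paragraph citation of \cite[Prop.\,2.11]{CKL} (via \cite[Remark 2.12]{CKL}), verifying only that the ``virtually smooth'' hypothesis holds because of the commutative diagram \eqref{dia1} and that the cosection setups match; your proposal unwinds what that citation does, and the unwinding is faithful to the structure. Your ingredients (i) and (ii) --- the functoriality of virtual normal cones under the triangle and the compatibility of localized with ordinary Gysin maps, carried out inside the kernel substacks --- are precisely the content of the cited result, so leaving them as assertions is fine in spirit but means your argument only becomes a complete proof when those two facts are supplied by \cite[Prop.\,2.11]{CKL} rather than reproved. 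Two small imprecisions worth correcting: the bundle stack $\fA_\ga=h^1/h^0(\EE_{\cWgg/\cDgg})$ carries the cosection $\sigma_{\ga,\nu}$, not $\ti\sigma_{\ga,\nu}$ (the latter lives on $\fA_{\ga,e}$); and the identification of $0^!_\loc[\fC_{\cWgg/\cDgg}]$ with $[\cWgg]\virtloc$ should be attributed to Proposition \ref{idcycle} rather than directly to \eqref{ab-cW}, since \eqref{ab-cW} uses the absolute cone whereas you are working with the relative one over $\cDgg$.
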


\begin{proof}
We  quote the relative version of
cosection localized pullback in \cite[Prop.\,2.11]{CKL}, stated in \cite[Remark 2.12]{CKL}. The proof of
\cite[Prop.\,2.11]{CKL} carries word by word to our case, such as $\cWgg/\cYgg$ satisfies the ``virtually smooth" condition in
\cite[(2.1)]{CKL} because of \eqref{dia1}. The cosections setup are also consistent. Proposition \ref{vpb} follows.
\end{proof}

We are ready to prove Proposition \ref{reduction}. We let
$$\fA_{\bar e}^\mu=\fA_{\bar e}\times_{\cWee}\cWee^\mu\and
\fB^\mu=\fB\times_{\cYgg}\cYgg^\mu.
$$
By Lemma \ref{factII}, $\fA_{\bar e}^\mu$ is a bundle stack over $\cW_{\bar e}$, where the later is irreducible.
Thus for a rational number $c$, $[\fA_{\bar e}]=c\cdot [\fA_{\bar e}^\mu]$.
Because the second square in \eqref{Diag} is Cartesian, using \eqref{BBB}, we conclude that
$$[\fB]=[\fC_{\cYgg/\cDgg}]=
[\fA_{\bar e}\times_{\cWee}\cYgg]=c\cdot [\fA_{\bar e}^\mu\times_{\cWee}\cYgg]=
c\cdot[\fB^\mu].
$$
Therefore by \eqref{BBB},
\beq\label{bbb}
f^!\lloc([\fC_{\cYgg/\cDgg}])=f^!\lloc([\fB])=c\cdot f^!\lloc([\fB^\mu]).
\eeq

Let $\kappa: \cWgg^\mu\to \cW_{\ga'}$ be induced by the $\beta$ (in \eqref{Diag}); let
$\ti\kappa: \cW_\ga^\sim\to \cW_{\ga'}^-$ be that induced by $\kappa$, as defined in \eqref{kappa}.
Let
$$\theta: \fA_{\ga,e}|_{\cWgg^\mu}=h^1/h^0(\EE_{\cWgg/\cYgg})|_{\cWgg^\mu}\lra \kappa\sta h^1/h^0(\EE_{\cWge/\cD_{\ga',\nu}}),
$$
be induced by \eqref{inj3} and the identity before \eqref{inj3}; it is a smooth morphism. We claim that (as cycle)
\beq\label{cycle}
[\fC_{\cW^\mu_\ga/\cYgg^\mu}]= \theta\sta [\fC_{\cWge/\cD_{\ga',\nu}}]\in
Z\lsta \bl h^1/h^0(\EE_{\cW^\mu_\ga/\cYgg^\mu})\br.
\eeq

To this end, we introduce a new stack $\cD_{\ga',\nu,\diamond}$, consisting of objects
$(\xi,\rho_\diamond)$, where $\xi=(\cC,\Sigma^\cC,\cL,\cN,\cdots)\in \cD_{\ga',\nu}(S)$ and
a nowhere vanishing $\rho_\diamond\in H^0(\omega_{\cC}^{\log}\otimes \cL^{\vee\otimes 5})|_\sR)$,
where $\cR\sub\cC$ is the section of the marking associated with the distinguished $1_\varphi$-leg of $\Ga'$.
(The distinguished leg the added one after trimming the edge $e$; see definition \ref{-string}.)

For any family $(\cC,\Sigma^\cC,\cL,\cN,\nu, \phi^e,\rho^e)$ in $\cYgg^\mu(S)$,
we let $\cR\sub\cC$ be section of nodes that separate $\cC$ into $\cC^\diamond$ and $\cC^e$
(cf. \eqref{ecirc}). Then by adding $\rho|_\cR$ to the family \eqref{ecirc} we obtain a family in
$\cD_{\ga',\nu,\diamond}$. This defines the morphism $\zeta_1$ below.
Let $\alpha$ shown below be the morphism defined similarly.
They form the (left) commutative diagram
\beq\label{Xdia}
\begin{CD}
\cW^\mu_\ga @>\kappa>>\cW_{\ga'}@>=>>\cW_{\ga'}\\
@VVV@VV{\alpha}V@VVV\\
\cYgg^\mu@>\zeta_1>>\cD_{\ga',\nu,\diamond}@>\zeta_2>>\cD_{\ga',\nu}
\end{CD}
\eeq
Let $\zeta_2$ be the forgetful morphism. It fits into the right commutative diagram above.
Because for family $(\cC,\cdots,\phi^e,\rho^e)$ in $\cYgg^\mu(S)$,
$\phi^e|_\cR=0$, one hecks directly that the left square above is a fiber product.

By its construction, $\zeta_2$ is smooth. Thus
 \beq\label{Xdia0}
 \begin{CD}
 \fC_{\cWge/\cD_{\ga',\nu,\diamond}}@>>> \fC_{\cWge/\cD_{\ga',\nu}}\\
@VVV@VVV\\
h^1/h^0(\TT_{\cWge/\cD_{\ga',\nu,\diamond}})@>>>h^1/h^0(\TT_{\cWge/\cD_{\ga',\nu}})
 \end{CD}
 \eeq
is a fiber product.
This implies
\beq\label{aux}  \TT_{\cW^\mu_\ga /\cYgg^\mu} \cong \kappa\sta  \TT_{\cW_{\ga'}/\cD_{\ga',\nu,\diamond}}
\and \fC_{\cW^\mu_\ga /\cYgg^\mu} \cong \kappa\sta  \fC_{\cW_{\ga'}/\cD_{\ga',\nu,\diamond}}.
\eeq
By \eqref{Xdia0} and \eqref{aux}, the following square is a fiber product:
\beq\label{diaX3}\begin{CD}
  \fC_{\cW^\mu_\ga/\cYgg^\mu} @>>>    \kappa\sta \fC_{\cWge/\cD_{\ga',\nu}}  \\
 @VVV@VVV\\
  h^1/h^0(\TT_{\cW^\mu_\ga/\cYgg^\mu}) @>>> \kappa\sta h^1/h^0(\TT_{\cWge/\cD_{\ga',\nu}}).
\end{CD}
\eeq

We next look at their deformation complexes.
To begin with, the family version of \eqref{cinc}
gives an exact sequence
\beq
\kappa\sta\alpha\sta \TT_{\cD_{\ga',\nu,\diamond}/\cD_{\ga',\nu}}
\lra \Ob_{\cW^\mu_\ga/\cYgg^\mu}\lra \kappa\sta\Ob_{\cWge/\cD_{\ga',\nu}}\lra 0.
\eeq
Note that $\TT_{\cD_{\ga',\nu,\diamond}/\cD_{\ga',\nu}}$ is an invertible sheaf whose fibers
are $(\omega_\sC^{\log}\otimes \sL^{\vee\otimes5})|_\sR$.
This sequence is the cohomology of the top row in
 \beq\label{diaX}
 \begin{CD}
 \TT_{\cD_{\ga',\nu,\diamond}/\cD_{\ga',\nu}}[-1]
@>>> \EE_{\cW^\mu_\ga/\cYgg^\mu}@>>> \kappa\sta\EE_{\cWge/\cD_{\ga',\nu}}@>+1>>\\
@| @AA{\phi\dual_{\cW^\mu_\ga/\cYgg^\mu}}A@AA{\phi\dual_{\cWge/\cD_{\ga',\nu}}}A\\
 \TT_{\cD_{\ga',\nu,\diamond}/\cD_{\ga',\nu}}[-1]
@>>> \TT_{\cW^\mu_\ga/\cYgg^\mu}@>>> \kappa\sta\TT
_{\cWge/\cD_{\ga',\nu}}@>+1>>.
\end{CD}
\eeq
Here the upper row is induced by derived push-forward of the family version of \eqref{inj3};
the lower row is  induced by \eqref{Xdia} and \eqref{aux}. Hence both rows are distinguished triangles. The   arrow  $\phi\dual_{\cWge/\cD_{\ga',\nu}}$ is induced by the ordinary   construction
and $\phi\dual_{\cW^\mu_\ga/\cYgg^\mu}$ is induced  by
the same process deriving  the first  vertical arrow in \eqref{sq-A} using   \eqref{blcone}'s blow up construction. Both vertical arrow uses direct image cone constructions. The commutativity of the second square in \eqref{diaX} follows from the natural arrow between  two universal families and two evaluations maps directly.


Taking $h^1/h^0$ of the diagram we obtain
\beq\label{diaX2}
 \begin{CD}
 T_{\cD_{\ga',\nu,\diamond}/\cD_{\ga',\nu}}
@>>> h^1/h^0(\EE_{\cW^\mu_\ga/\cYgg^\mu})@>\theta>> \kappa\sta h^1/h^0(\EE_{\cWge/\cD_{\ga',\nu}})\\
@|@AA{h^1/h^0(\phi\dual_{\cW^\mu_\ga/\cYgg^\mu})}A@AA{ h^1/h^0(\phi\dual_{\cWge/\cD_{\ga',\nu}})}A\\
 T_{\cD_{\ga',\nu,\diamond}/\cD_{\ga',\nu}}
@>>>  h^1/h^0(\TT_{\cW^\mu_\ga/\cYgg^\mu})@>>> \kappa\sta h^1/h^0(\TT
_{\cWge/\cD_{\ga',\nu}}).
\end{CD}
\eeq
 By \cite[Prop.\,2.7]{BF}, both rows are exact sequences of cone stacks. Therefore the second square of \eqref{diaX2} is a fiber product. By Proposition \ref{functorial0},
we know that $\ti\phi\dual_{\cW^\mu_\ga/\cYgg^\mu}$(in \eqref{dia1}) is $\nu$-equivalent to
$\phi\dual_{\cW^\mu_\ga/\cYgg^\mu}$ (cf. \cite[Def.\,2.9]{semi}), thus the cycle
$[\fC_{\cW^\mu_\ga/\cYgg^\mu}]$ 
induced by $\ti\phi\dual_{\cW^\mu_\ga/\cYgg^\mu}$ is identical to that induced by
$\phi\dual_{\cW^\mu_\ga/\cYgg^\mu}$ (cf. \cite[Prop.\,2.10]{semi} and \cite[Lemm.\,2.3]{semi}). Combined with \eqref{diaX3}, this proves the clam \eqref{cycle}.

We consider $\pi_1^\mu$ (, compare with $\pi_1$ in \eqref{pipi},)
$$\pi_1^\mu\defeq \pi|_{\cW_{\ga|\fB}^\mu}: \cW_{\ga|\fB}^\mu\defeq \cW_\ga^\mu\times_{\cYgg}\fB=\cWgg^\mu\times_{\cWee}\fA_{\bar e}^\mu
\lra \cWgg^\mu,
$$
where $\fA_{\bar e}^\mu=\fA_{\bar e}\times_{\cWee}\cWee^\mu$.
We let
$$\psi: \cW_{\ga|\fB}^\mu= 
\cWgg^\mu\times_{\cWee}\fA_{\bar e}^\mu\lra \cWgg^\mu
$$
be the first projection. Then

Then by the definition of $\ti\eps$ (cf. \eqref{shaloc} and \eqref{sha}), 
$$\ti\eps[\fB^\mu]
=[ \fC_{\cWgg^\mu/{\cYgg^\mu}} \times_{\cWgg^\mu}\fA_{\bar e}^\mu]
=\psi\sta \theta\sta[\fC_{\cWge/\cD_{\ga',\nu}} ].
$$
Applying $0^!_{\pi_1\sta\ti\sigma_{\ga,\nu},\text{loc}}$, we obtain 
$$ 0^!_{\pi_1\sta\ti\sigma_{\ga,\nu},\text{loc}} \bl\psi\sta \theta\sta[\fC_{\cWge/\cD_{\ga',\nu}} ]\br
=\psi\sta \ti\kappa\sta\bl 0^!_{\ti\sigma_{\ga',\nu},\text{loc}}[\fC_{\cWge/\cD_{\ga',\nu}} ]\br
=\psi\sta \ti\kappa\sta [\cW_{\ga'}]\virtloc.
$$
(Recall $\ti\kappa: \cW_\ga^{\sim}\to \cW^-_{\ga'}$ is defined in \eqref{kappa}.)
Adding
$$0^!_{\pi_1} (\psi\sta \ti\kappa\sta[\cW_{\ga'}]\virtloc)
=\ti\kappa\sta[\cW_{\ga'}]\virtloc\in A\lsta \cW_\ga^\sim=A\lsta D(\ti\sigma_{\ga,\nu}),
$$
we prove that
$$f^!\lloc[\fB^\mu]=\ti\kappa\sta[\cW_{\ga'}]\virtloc\in A\lsta D(\ti\sigma_{\ga,\nu}).
$$
This proves Proposition \ref{reduction}.

\section{Appendix}
\def\ccX{\mathcal X}

Let $\ccX$ be an Artin stack; let $\pi:\cC\to \ccX$ be a flat family twisted nodal curves, and
let $\cV\to \cC$ be a smooth morphism of quasi-projective type. We denote by $C(\pi\lsta \cV)$ the groupoid
defined as follows: for any
scheme $S$, $C(\pi\lsta \cV)(S)$ consists of all
$(\sigma, s)$, where $\sigma: S\to\ccX$ is a morphism, $\cC_\sigma=\cC\times_\ccX S$ and
$\cV_\sigma=\cV\times_\cC\cC_\sigma$, 
and $s: S\to \cV_\sigma$ is an $S$-morphism (a section of $\cV_\sigma\to S$).
Arrows between two objects $(\sigma, s)$ and $(\sigma', s')$
consists of an arrow between $\sigma$ and $\sigma'$ so that $s=s'$ under the induced isomorphism
$\cV_\sigma\cong\cV_{\sigma'}$.

We abbreviate $\cW=C(\pi\lsta\cV)$.
Let $\pi_\cW:\cC_\cW \to \cW$
be the pullback of $\cC\to\ccX$, and let $\ev:\cC_\cW\to \cV$ be the tautological
evaluation map (induced by the section $s$), which fits into the commutative diagrams
\beq\label{first-Q}
\begin{CD}
\cW@<\pi_\cW<<\cC_{\cW}@>{\ev}>>\cV\\
@VVV@VVV@VVV\\
\ccX@<<<\cC@=\cC.
\end{CD}
\eeq
Applying the projection formula to
$\pi_{\cW}^\ast \TT_{\cW/\ccX}\cong \TT_{\cC_\cW/\cC}\to\ev^\ast \TT_{\cV/\cC}$,
and using
$\TT_{\cW/\ccX}\lra R\pi_{\cW\ast}\pi_\cW\sta\TT_{\cW/\ccX}$,
we obtain
\begin{equation}\label{def--Q}
\phi\dual_{\cW/\ccX}: 
\TT_{\cW/\ccX}\lra \EE_{\cW/\ccX}\defeq R\pi_{\cW\ast}\ev\sta\TT_{\cV/\cC}.
\end{equation}
By \cite[Prop.\,1.1]{CL}, it is a perfect obstruction theory.\footnote{This construction of $\phi\dual_{\cW/\ccX}$ applies to arbitrary representable $\cV\to\cC$. We restrict ourselves to bundle case for notational simplicity.}

\smallskip

In the following, we assume $\cV\to\cC$ is a (fixed) vector bundle. We consider two separate cases.
The first case we consider is when $\cV=\cV_1\oplus\cV_2$ is a direct sum of two vector bundles.
We continue to denote $\cW=C(\pi\lsta\cV)$. We introduce $\cW_i=C(\pi\lsta\cV_i)$.
Then the direct sum $\cV=\cV_1\oplus\cV_2$ induces a morphism
$\cW\lra \cW_1\times_\ccX\cW_2$,
which by direct check is an isomorphism.

There is another way to see this isomorphism. We let $\cC_{\cW_2}\defeq \cC\times_{\ccX}\cW_2$;
use (the same) $\pi: \cC_{\cW_2}\to\cW_2$ to denote its projection, and denote $\cV_{1,\cW_2}=\cV_1\times_\cC\cC_{\cW_2}$.


\begin{lemm}
We have canonical isomorphisms
$\cW\cong C(\pi\lsta(\cV_{1,\cW_2}))\cong \cW_1\times_\ccX\cW_2$.
\end{lemm}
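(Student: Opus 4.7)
The plan is to verify both isomorphisms by unwinding the groupoid definitions on $S$-points, using that every arrow on either side is induced purely from an arrow in $\ccX(S)$; so the content reduces to a functorial bijection on objects, and no separate check on morphisms is needed beyond the tautological one.

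For the second isomorphism $C(\pi\lsta \cV_{1,\cW_2})\cong\cW_1\times_\ccX\cW_2$, I would unwind: an $S$-point of the left side is a morphism $\tau\colon S\to\cW_2$ (equivalently $(\sigma,s_2)\in\cW_2(S)$ with $\sigma\colon S\to\ccX$ and $s_2$ a section of $\cV_{2,\sigma}$ over $\cC_\sigma=\cC\times_\ccX S$), together with a section of $\cV_{1,\cW_2}\times_{\cW_2}S\to\cC_{\cW_2}\times_{\cW_2}S$. The key observation is that the Cartesian squares defining $\cC_{\cW_2}=\cC\times_\ccX\cW_2$ and $\cV_{1,\cW_2}=\cV_1\times_\cC\cC_{\cW_2}$ give, after base change along $\tau$, canonical identifications $\cC_{\cW_2}\times_{\cW_2}S\cong\cC_\sigma$ and $\cV_{1,\cW_2}\times_{\cW_2}S\cong\cV_1\times_\cC\cC_\sigma$; under these, the remaining section becomes exactly the datum of an $S$-point $s_1$ of $\cW_1$ lying over $\sigma$. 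Hence the assignment $(\tau,s_1)\mapsto\bl(\sigma,s_1),(\sigma,s_2)\br$ is a functorial bijection with $(\cW_1\times_\ccX\cW_2)(S)$, with the evident inverse.

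For the first isomorphism $\cW\cong C(\pi\lsta \cV_{1,\cW_2})$, I would instead exhibit the direct-sum splitting $\cW\cong \cW_1\times_\ccX\cW_2$ directly and then compose with the previous identification. An $S$-point of $\cW$ is a pair $(\sigma,s)$ with $\sigma\colon S\to\ccX$ and $s$ a section of $\cV_\sigma=\cV_{1,\sigma}\oplus\cV_{2,\sigma}\to\cC_\sigma$; since $\cV$ is a direct sum of vector bundles, $s$ splits uniquely as $(s_1,s_2)$ with $s_i$ a section of $\cV_{i,\sigma}$. The rule $(\sigma,s)\mapsto\bl(\sigma,s_1),(\sigma,s_2)\br$ gives a functorial bijection $\cW(S)\to(\cW_1\times_\ccX\cW_2)(S)$, whose inverse adds two sections coordinate-wise. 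Composing with the previous step completes the chain of isomorphisms.

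The only conceivable obstacle is bookkeeping: one must track carefully the two descriptions of a section of $\cV_{1,\cW_2}$, namely as a section relative to $\cC_{\cW_2}\to\cW_2$ versus as a section relative to $\cC_\sigma\to S$ after base change by $\tau$, and verify they agree. But this is forced by the Cartesian squares above, so no substantive obstacle arises. In essence, the lemma is a tautological reformulation of the fact that the functor $C(\pi\lsta -)$ converts direct sums into fiber products over $\ccX$, and both isomorphisms are canonical because each arises from universal properties of pullback.
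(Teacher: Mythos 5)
Your proof is correct. The paper itself gives no written proof of this lemma (the surrounding text simply says the isomorphism $\cW\to\cW_1\times_\ccX\cW_2$ is "by direct check," then notes that $C(\pi\lsta\cV_{1,\cW_2})$ gives "another way to see" it), and your unwinding of $S$-points via the Cartesian squares $\cC_{\cW_2}=\cC\times_\ccX\cW_2$ and $\cV_{1,\cW_2}=\cV_1\times_\cC\cC_{\cW_2}$, together with the direct-sum splitting of sections of $\cV=\cV_1\oplus\cV_2$, is precisely that direct check.
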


Let $q_2:\cW\to\cW_2$ be the projection, as in the above lemma.
We let
$\phi\dual_{\cW/{\cW_2}}$, etc., 
be similarly defined perfect obstruction theories, as shown below,
\beq\label{sq-B}
\begin{CD}
\EE_{\cW/{\cW_2}}@>>>\EE_{\cW/\ccX}@>>>q_2\sta\EE_{{\cW_2}/\ccX}@>+1>>\\
@AA{\phi\dual_{\cW/{\cW_2}}}A@AA{\phi\dual_{\cW/\ccX}}A@AA{\phi\dual_{{\cW_2}/\ccX}}A\\
\TT_{\cW/{\cW_2}}@>>>\TT_{\cW/\ccX}@>>>q_2\sta\TT_{{\cW_2}/\ccX}@>+1>>,
\end{CD}
\eeq
where the top line is the d.t. induced by
$\cV=\cV_1\oplus\cV_2$, and the lower line is induced by $\cW\to\cW_2\to\ccX$.

\begin{prop}  \label{functorial}
The diagram \eqref{sq-B} is a morphism
between d.t.s.
\end{prop}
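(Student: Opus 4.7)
The plan is to exhibit both rows of \eqref{sq-B} as \emph{split} distinguished triangles and to verify that the middle vertical arrow $\phi\dual_{\cW/\ccX}$ decomposes block-diagonally as the direct sum of the two outer vertical arrows under these splittings. Once this is established, commutativity of both squares becomes automatic: the left square expresses the inclusion of the first summand into the direct sum, and the right square expresses the projection onto the second summand.

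For the top row, since $\cV = \cV_1 \oplus \cV_2$ as vector bundles over $\cC$, we have $\TT_{\cV/\cC} = p_1\sta \TT_{\cV_1/\cC} \oplus p_2\sta \TT_{\cV_2/\cC}$ where $p_i\colon \cV\to\cV_i$ are the projections. Pulling back via $\ev$ and applying $R\pi_{\cW\ast}$ gives
\[
\EE_{\cW/\ccX} \;\cong\; R\pi_{\cW\ast}\ev_1\sta\TT_{\cV_1/\cC} \;\oplus\; R\pi_{\cW\ast}\ev_2\sta\TT_{\cV_2/\cC}.
\]
Using the identification $\cW \cong C(\pi\lsta \cV_{1,\cW_2})$ from the preceding lemma, the first summand is $\EE_{\cW/\cW_2}$; using flat base change along $\cW\to\cW_2$ (applied to $\cC_\cW\to\cC_{\cW_2}$), the second is $q_2\sta\EE_{\cW_2/\ccX}$. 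This exhibits the top distinguished triangle as the obvious split triangle. For the bottom row, the fiber product structure $\cW\cong \cW_1\times_\ccX\cW_2$ gives $\TT_{\cW/\cW_2} \cong q_1\sta\TT_{\cW_1/\ccX}$ and a matching split d.t.\ with $\TT_{\cW/\ccX} \cong \TT_{\cW/\cW_2} \oplus q_2\sta\TT_{\cW_2/\ccX}$.

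Next, I would verify the block-diagonal decomposition of $\phi\dual_{\cW/\ccX}$. By construction \eqref{def--Q}, this map arises from the tangent-of-section map $\TT_{\cC_\cW/\cC} \to \ev\sta \TT_{\cV/\cC}$ via $R\pi_{\cW\ast}$ and adjunction. Because $\cV = \cV_1\oplus\cV_2$ and the universal section is $s=(s_1,s_2)$, this tangent map is the direct sum of the tangent-of-section maps for $s_1$ and $s_2$. Applying $R\pi_{\cW\ast}$ preserves the direct sum, and invoking flat base change along $\cW\to\cW_2$ for the second summand identifies the two factors with $\phi\dual_{\cW/\cW_2}$ and $q_2\sta\phi\dual_{\cW_2/\ccX}$, respectively. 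Commutativity of both squares in \eqref{sq-B} then follows formally from this direct sum decomposition.

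The main obstacle is purely bookkeeping: one must carefully verify that $R\pi_{\cW\ast}\ev_1\sta\TT_{\cV_1/\cC}$ (computed on $\cC_\cW$ with $\cV_1\to\cC$) genuinely agrees with the defining complex for $\phi\dual_{\cW/\cW_2}$ built from $C(\pi\lsta\cV_{1,\cW_2})$ over $\cC_{\cW_2}$. This is flat base change for the Cartesian square $\cC_\cW = \cC_{\cW_2}\times_{\cW_2}\cW \to \cC_{\cW_2}$ applied to $\cV_{1,\cW_2} = \cV_1\times_\cC\cC_{\cW_2}$, together with compatibility of the tangent-of-section map under base change; both are standard, though the notational tracking of pullbacks through the several fiber products requires care.
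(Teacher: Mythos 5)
Your proposal is correct and follows essentially the same route as the paper: both rest on the direct-sum decomposition $\cV=\cV_1\oplus\cV_2$ inducing the top triangle, on the identification $\cW\cong C(\pi\lsta\cV_{1,\cW_2})$ to recognize the first factor as $\phi\dual_{\cW/\cW_2}$, and on flat base change along $\cW\to\cW_2$ for the second factor. The paper phrases this by writing down a commutative diagram of spaces and applying $R\pi_{\cW\ast}$ rather than by saying ``block-diagonal,'' but the content and the one nontrivial check (that the first summand really is $\phi\dual_{\cW/\cW_2}$, which you correctly flag as the bookkeeping step) are identical.
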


\begin{proof}  We form the diagram
\beq\label{1}
\begin{CD}
\cC_\cW@>\ev>>\cV@>\gamma_1>>\cV_1\\
@VV{\ti q_2}V@VV\gamma_2V@VVV\\
\cC_{{\cW_2}}@>{\ev_2}>> \cV_2@>>> \cC,
\end{CD}
\eeq
where $\gamma_i$ are projections induced by the direct sum $\cV=\cV_1\oplus\cV_2$;
and $\ti q_2$ is the lift of $q_2:\cW\to\cW_2$.
It induces a homomorphism between d.t.s
\beq\label{3}
\begin{CD}
\ev\sta\TT_{\cV/\cV_2}@>>> \ev\sta \TT_{\cV/\cC} @>>>\ti q_2\sta \ev_2\sta \TT_{\cV_2/\cC}@>+1>>\\
@AA{\psi}A@AAA@AAA\\
\TT_{\cC_\cW/\cC_{{\cW_2}}}@>>>\TT_{\cC_\cW/\cC}@>>>\ti q_2\sta \TT_{\cC_{{\cW_2}}/\cC}@>+1>>.
\end{CD}
\eeq
As $\cC\to\ccX$ is flat, the second row is equal to the pull back via $\pi_\cW:\cC_\cW\to\cW$ of the tangent complexes d.t. of the triple $\cW\to\cW_2\to\ccX$. Applying the projection formula to \eqref{3},
we obtain the following
morphism of d.t.s
$$\begin{CD}
R\pi_{\cW\ast}\ev\sta\TT_{\cV/\cV_2}@>>>R\pi_{\cW\ast} \ev\sta \TT_{\cV/\cC} @>>>R\pi_{\cW\ast}\ti q_2\sta \ev_2^{*} \TT_{\cV_2/\cC}@>+1>>\\
@AA{\psi_1}A@AA{\psi_2}A@AA{\psi_3}A\\
\TT_{\cW/{\cW_2}}@>>>\TT_{\cW/\ccX}@>>>q_2\sta \TT_{{\cW_2}/\ccX}@>+1>>\\
\end{CD}
$$
Note that by definition, $\EE_{\cW_2/\ccX}=R\pi_{\cW\ast}\ev_2\sta\TT_{\cV_2/\cC}$ and $\EE_{\cW/\cX}=
R\pi_{\cW\ast}\ev\sta\TT_{\cV/\cC}$. Because of the identity
$$R\pi_{\cW\ast}\ti q_2\sta \ev_2\sta \TT_{\cV_2/\cC}=
q_2\sta R\pi_{\cW\ast}\ev_2\sta\TT_{\cV_2/\cC},
$$
we see that $\psi_2=\phi\dual_{\cW/\ccX}$
and $\psi_3=q_2\sta\phi\dual_{\cW_2/\ccX}$.

It remains to show that $\psi_1=\phi\dual_{\cW/\cW_2}$.
Observe that $\psi_1$ is induced by the left square in \eqref{1},
and that square is identical to the left square in
\beq\label{2}
\begin{CD}
\cC_\cW@>\ev'>> \cV_{1,\cW_2} @>\pr>> \cV_1\\
@VVV@VVV@VVV\\
\cC_{\cW_2} @=\cC_{\cW_2}@>>>\cC.
\end{CD}
\eeq
Here $\ev'$ is the universal evaluation associated with the canonical $\cW \cong C(\pi\lsta(\cV_{1,\cW_2}))$.
Thus we have $\pr\circ\ev'=\gamma_1\circ \ev$, where $\gamma_1:\cV\to\cV_1$ is defined in \eqref{1}.

Since $\cV_1\to\cC$ is a bundle and thus is flat, we have $\TT_{\cV/\cV_2}\cong {\gamma_1\sta} \TT_{\cV_1/\cC}$;
thus the arrow $\psi_1$ equals
\beq\label{4}\TT_{\cC_\cW/\cC_{\cW_2}}\lra \ev\sta\gamma_1\sta\TT_{\cV_1/\cC}=
\pr\sta(\ev')\sta\TT_{\cV_1/\cC}= (\ev')\sta\TT_{\cV_{1,\cW_2}/\cC_{\cW_2}}.
\eeq
Here the last isomorphism is due to that $\pr\sta\TT_{\cV_1/\cC}\cong \TT_{\cV_{1,\cW_2}/\cC_{\cW_2}}$, as
$\cV_1\to \cC$ is smooth. On the other hand, it is evident that \eqref{4} is induced by $\ev'$. Therefore,
$$\EE_{\cW/\cW_2}\defeq R\pi_{\cW\ast}(\ev')\sta\TT_{\cV_{1,\cW_2}/\cC_{\cW_2}}=
R\pi_{\cW\ast}\ev\sta\TT_{\cV/\cV_2},
$$
and that $\psi_1=\phi\dual_{\cW/\cW_2}$. This proves the proposition.
 \end{proof}

\begin{rema} The natural diagram \eqref{sq-B} is commutative in case $\cV_1$ and $\cV_2$ are arbitrary Artin
stacks representable and quasi-projective over $\cC$, and $\cV_1\to \cC$ is flat. The proof is identical.
\end{rema}

The second case is when there is a (scheme) section of nodes $\cR\sub\cC$ that decomposes
$\cC$ into a union of two $\ccX$-families $\cC_1$ and $\cC_2$. We denote (the same) $\pi:\cC_i\to \ccX$ to be the projection. We let $\cV_i=\cV|_{\cC_i}$($=\cV\times_{\cC}\cC_i$), and define ${\cW_1}=C(\pi_{\ast}\cV_1)$.
We let
$$\phi\dual_{{\cW_1}/\ccX}: \TT_{{\cW_1}/\ccX}\lra \EE_{{\cW_1}/\ccX}$$
be the similarly defined perfect obstruction theory.
Note that for any $S$-family $(\sigma,s)$ in $\cW(S)$, letting
$\cC_{1,\sigma}=\cC_1\times_\cC\cC_\sigma$, 
then the family $(\sigma,s|_{\cC_{1,\sigma}})$ is a family in $\cW_1(S)$.
This defines a morphism
\beq\label{tau11}
\tau:\cW\lra {\cW_1}.
\eeq

To proceed, we like to rewrite $\tau$ along the line of a similar construction.
For $i=1$ and $2$, we let
$$\cC_{i,\cW_1}=\cC_i\times_{\ccX}\cW_1,\and \cV_{i,\cW_1}=\cV_i\times_{\cC_i}\cC_{i,\cW_1},
$$
with $\pi: \cC_{i,\cW_1}\to\cW_1$ its projection.

Let $\cS_1\in\Gamma(\cV_{1,\cW_1})$ be the universal section of $\cW_1$.
Let $\ti\cR=\cR\times_\cC\cC_{1,\cW_1}$ be the section (of $\cC_{1,\cW_1}\to\cW_1$) associated to
$\cR\sub\cC$. Then $\cS_1|_{\ti\cR}$ is a section of $\cV_{1,\cW_1}|_{\ti\cR}$.
Using $\cR=\cC_1\cap\cC_2$, we have $\cR\times_\cC\cC_{1,\cW_2}=\cR\times_\cC\cC_{2,\cW_1}$.
As $\cV_1$ and $\cV_2$ are respective restrictions of $\cV$,
$\cS_1|_{\ti\cR}$ is also a section of $\cV_{2,\cW_1}|_{\ti\cR}=\cV_{1,\cW_1}|_{\ti\cR}$.
We let $\Sigma\sub\cV_{2,\cW_1}$ be the substack
$\Sigma=\cS_1|_{\ti\cR}\sub \cV_{2,\cW_1}|_{\ti\cR}\sub\cV_{2,\cW_1}$.
We let $\text{Bl}_{\Sigma}(\cV_{2,\cW_1})$ be the blowing-up of $\cV_{2,\cW_1}$ along
$\Sigma$; we let
\beq\label{blow}
\cV_{2/1}= \text{Bl}_{\Sigma}(\cV_{2,\cW_1})-\{\text{the proper transform of }
\cV_{2,\cW_1}|_{\ti\cR}\sub \cV_{2,\cW_1}\}.
\eeq
We let $\pi: \cV_{2/1}\to\cW_1$ be the induced projection; we define
\beq\label{blcone}\cW_{2/1}=C(\pi\lsta\cV_{2/1}).
\eeq
 Note that $\cV_{2/1}$ is smooth over $\cC_{2,\cW_1}$.

We now construct a canonical (restriction) $\cW_1$-morphism
$\imath: \cW\to \cW_{2/1}$. Given any $\phi: S\to\cW$, associated to $(\sigma,s)\in\cW(S)$,
restricting $s$ to $\cC_1\times_{\ccX}S$ gives a family $(\sigma, s|_{\cC_1\times_{\ccX}S})\in\cW_1(S)$,
associating to the morphism $\tau(\phi):S\to\cW_1$.
The other part $s|_{\cC_2\times_{\ccX}S}$ is a section of the bundle
$$\cV_2\times_{\ccX}S=(\tau(\phi))\sta(\cV_{2,\cW_1})=\cV_{2,\cW_1}\times_{\tau(\phi),\cW_1}S.
$$
Because $s|_{\cC_1\times_{\ccX}S}$ and $s|_{\cC_2\times_{\ccX}S}$ are identical along
$\cR\times_{\ccX}S$, the section $s|_{\cC_2\times_{\ccX}S}$ lifts to a section of $(\tau(\phi))\sta(\cV_{2/1})$.
This defines a morphism
$\imath(\phi): S\lra \cW_{2/1}$,
commuting with $\phi: S\to\cW$, $\tau:\cW\to \cW_1$, and the
projection $\cW_{2/1}\to\cW_1$. As $\iota(\phi)$ is canonical, it defines a $\cW_1$-morphism
$\imath: \cW\lra \cW_{2/1}$.

\begin{lemm}
The morphism $\imath$ is an isomorphism. Let $\pr: \cW_{2/1}\to\cW_1$ be the tautological projection,
then $\pr\circ\imath=\tau$.
\end{lemm}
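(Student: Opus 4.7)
The plan is to verify both assertions directly from the construction of $\imath$. The compatibility $\pr\circ\imath=\tau$ will fall out from the definitions: the construction of $\imath(\phi)$ for $\phi:S\to\cW$ begins by setting the $\cW_1$-component to be exactly $\tau(\phi)$, and $\pr:\cW_{2/1}\to\cW_1$ forgets the $\cV_{2/1}$-section, so $\pr\circ\imath=\tau$ is tautological.

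The substantive claim is that $\imath$ is an isomorphism, which I would prove by constructing an explicit two-sided inverse $\jmath:\cW_{2/1}\to\cW$ on $S$-points. Given an $S$-point of $\cW_{2/1}$, consisting of a map $\phi_1:S\to\cW_1$ corresponding to a pair $(\sigma,s_1)$ (with $s_1$ a section of $\cV_1\times_\ccX S$ over $\cC_{1,S}\defeq\cC_1\times_\ccX S$), together with a section $s_2$ of the pullback family $\cV_{2/1}\times_{\cW_1}S$ over $\cC_{2,S}$, I will first compose $s_2$ with the blow-down $\cV_{2/1}\hookrightarrow\text{Bl}_\Sigma(\cV_{2,\cW_1})\to\cV_{2,\cW_1}$, followed by the projection $\cV_{2,\cW_1}\times_{\cW_1}S=\cV_2\times_\ccX S$, to obtain a section $s_2'$ of $\cV_2\times_\ccX S$ over $\cC_{2,S}$. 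Then I will glue $s_1$ and $s_2'$ along $\cR_S\defeq\cR\times_\ccX S$ to produce a section $s$ of $\cV\times_\ccX S$ over $\cC_S$, yielding an object $(\sigma,s)\in\cW(S)$; this will be functorial in $S$ and hence define $\jmath$.

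The main obstacle is verifying that $s_1|_{\cR_S}=s_2'|_{\cR_S}$, which is exactly the compatibility needed to glue. This is where the removal of the proper transform in \eqref{blow} is used: any point of $\cV_{2/1}$ whose image in $\cV_{2,\cW_1}$ lies in $\cV_{2,\cW_1}|_{\ti\cR}$ must lie in the exceptional divisor of the blow-up along $\Sigma$ (since the proper transform of $\cV_{2,\cW_1}|_{\ti\cR}$ has been excised), and therefore projects down to $\Sigma=\cS_1|_{\ti\cR}$. Applying this pointwise along $\cR_S\subset\cC_{2,S}$, the composite $s_2'|_{\cR_S}$ is forced to land in $\Sigma\times_{\cW_1}S$; unwinding the identification $\Sigma\cong\ti\cR$ induced by $\cS_1$ (and pulling back via $\phi_1:S\to\cW_1$), this reads precisely as $s_2'|_{\cR_S}=s_1|_{\cR_S}$.

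Once $\jmath$ is defined, the verifications $\jmath\circ\imath=\id_\cW$ and $\imath\circ\jmath=\id_{\cW_{2/1}}$ should be straightforward: the first is the tautology that restricting a section of $\cV$ to $\cC_1$ and $\cC_2$ and then gluing reproduces the original section, while the second uses that the blow-down $\cV_{2/1}\to\cV_{2,\cW_1}$ is an isomorphism away from the fiber over $\ti\cR$, so the lift of $s_2'$ to a section of $\cV_{2/1}$ exists uniquely given that $s_2'|_{\cR_S}$ lands in $\Sigma$. I anticipate no real difficulty beyond the blow-up analysis described above.
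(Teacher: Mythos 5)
Your proposal is correct and proceeds along the same lines the paper intends (the paper's own proof is the one-line assertion that it ``follows directly from the construction,'' and what you have written is precisely the direct verification that is being pointed to). You have correctly located the one nontrivial point: that a section of $\cV_{2/1}$ restricted over $\ti\cR$ is forced, under blow-down, to land in $\Sigma$ because the proper transform of $\cV_{2,\cW_1}|_{\ti\cR}$ has been excised, leaving only the exceptional fibre over $\ti\cR$; this is what guarantees the compatibility $s_1|_{\cR_S}=s_2'|_{\cR_S}$ needed for gluing. For the check $\imath\circ\jmath=\id$, note that you do not need to establish existence of the lift of $s_2'$ to $\cV_{2/1}$ from scratch: since $s_2$ is itself such a lift, you only need uniqueness, which follows because the blow-down is an isomorphism over the complement of $\ti\cR$ (a dense open of $\cC_{2,S}$) and the target is separated. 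Your account is a faithful spelling-out of what the paper leaves implicit.
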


\begin{proof}
The proof follows directly from the construction.
\end{proof}

In the following, we will not distinguish $\cW$ and $\cW_{2/1}$ because of $\imath$.
Because all $\cW=\cW_{2/1}\to\cW_1$, $\cW\to\ccX$ and $\cW_1\to\ccX$ are
of the construction stated in the beginning of the Appendix, we have perfect obstruction theories $\phi\dual_{\bullet/\bullet}$ shown
\beq\label{sq-A}
 \begin{CD}
\EE_{\cW/{\cW_1}}@>\lam_1>>\EE_{\cW/\ccX}@>\lam_2>>\tau\sta\EE_{{\cW_1}/\ccX}@>+1>>\\
@AA{\phi\dual_{\cW/{\cW_1}}}A@AA{\phi\dual_{\cW/\ccX}}A@AA{\tau\sta\phi\dual_{{\cW_1}/\ccX}}A\\
\TT_{\cW/{\cW_1}}@>>>\TT_{\cW/\ccX}@>>>\tau\sta\TT_{{\cW_1}/\ccX}@>+1>>.
\end{CD}
\eeq
Here the lower sequence is the one induced by $\cW=\cW_{2/1}\to\cW_1\to\ccX$. The
arrow $\lam_1$ is induced by the canonical composite $\cV_{2/1}\to\cV$,
and $\lam_2$ is induced by the restriction of sheaves (bundles) $\cV\to\cV_2$.

\begin{prop}  \label{functorial0}
The the two rows in \eqref{sq-A} are d.t.s; the two squares in in \eqref{sq-A} are commutative.
Further, taking base change of \eqref{sq-A} via any $\xi\in\cW(\CC)$ and taking long exact sequences of
cohomology groups of the two rows, the vertical arrows induce a morphism between the two complexes of
vectos spaces.
\end{prop}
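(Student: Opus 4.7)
The plan is to verify the three assertions in turn. The lower row of \eqref{sq-A} is the standard tangent triangle associated to the composition $\cW\to\cW_1\to\ccX$ (and pulled back to $\cW$), so it is a distinguished triangle by general nonsense. For the upper row, I will identify each vertex with the derived pushforward of a genuine sheaf on $\cC_\cW$ and realize the triangle as the image of a Mayer--Vietoris short exact sequence under $R\pi_{\cW\ast}$.

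First, since $\cV\to\cC$ is a vector bundle, there is a canonical identification $\ev\sta\TT_{\cV/\cC}\cong\pi_{\cW,\cC}\sta\cV$, where $\pi_{\cW,\cC}:\cC_\cW\to\cC$, and hence $\EE_{\cW/\ccX}=R\pi_{\cW\ast}\pi_{\cW,\cC}\sta\cV$. By flat base change along $\tau$, we obtain $\tau\sta\EE_{\cW_1/\ccX}=R\pi_{\cW\ast}(\pi_{\cW,\cC}\sta\cV|_{\cC_{1,\cW}})$. Using the identification $\imath:\cW\cong\cW_{2/1}$, the relative obstruction complex is $\EE_{\cW/\cW_1}=R\pi_{\cW\ast}(\ev')\sta\TT_{\cV_{2/1}/\cC_{2,\cW_1}}$. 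The blow-up construction in \eqref{blow} is designed so that sections of $\cV_{2/1}$ over $\cC_{2,\cW_1}$ parametrize sections of $\cV_2$ agreeing with $\cS_1$ on $\ti\cR$; infinitesimal deformations must therefore vanish on $\ti\cR$, yielding a canonical isomorphism
$$(\ev')\sta\TT_{\cV_{2/1}/\cC_{2,\cW_1}}\;\cong\;\pi_{\cW,\cC}\sta\cV|_{\cC_{2,\cW}}(-\ti\cR_\cW).$$

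Second, with these identifications the upper row of \eqref{sq-A} is precisely the image, under $R\pi_{\cW\ast}$, of the Mayer--Vietoris exact sequence on $\cC_\cW=\cC_{1,\cW}\cup_{\ti\cR_\cW}\cC_{2,\cW}$,
$$0\to \pi_{\cW,\cC}\sta\cV|_{\cC_{2,\cW}}(-\ti\cR_\cW)\to \pi_{\cW,\cC}\sta\cV\to \pi_{\cW,\cC}\sta\cV|_{\cC_{1,\cW}}\to 0.$$
This establishes the upper row as a distinguished triangle and identifies $\lam_1$, $\lam_2$ with the arrows coming from this sequence (matching the descriptions given after \eqref{sq-A}). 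Both squares in \eqref{sq-A} then commute by functoriality of the direct image cone construction of \cite[Prop.\,1.1]{CL}: the inclusion $\pi_{\cW,\cC}\sta\cV|_{\cC_{2,\cW}}(-\ti\cR_\cW)\hookrightarrow\pi_{\cW,\cC}\sta\cV$ and the restriction $\pi_{\cW,\cC}\sta\cV\to\pi_{\cW,\cC}\sta\cV|_{\cC_{1,\cW}}$ are visibly compatible with the canonical tangent-to-obstruction arrows constructed from the respective evaluation diagrams. For the final assertion, once both rows are distinguished triangles and both squares commute, base change to any $\xi\in\cW(\CC)$ followed by $H\sta$ produces two long exact sequences joined by the three vertical arrows into a commutative ladder, i.e.\ a morphism of cochain complexes.

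The main obstacle is the identification $(\ev')\sta\TT_{\cV_{2/1}/\cC_{2,\cW_1}}\cong \pi_{\cW,\cC}\sta\cV|_{\cC_{2,\cW}}(-\ti\cR_\cW)$, which requires a careful local analysis of the blow-up-and-removal construction \eqref{blow} to confirm both that $\cV_{2/1}\to\cC_{2,\cW_1}$ is smooth with the claimed relative tangent bundle, and that the tautological arrow $\cV_{2/1}\to\cV$ induces precisely the inclusion appearing in the Mayer--Vietoris sequence; once this is in hand, the rest of the argument is a formal consequence of the direct image cone formalism.
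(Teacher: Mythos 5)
Your proposal follows essentially the same strategy as the paper's own proof: identify $\EE_{\cW/\cW_1}$ with $R\pi_{\cW\ast}$ of $\ev\sta(\cV|_{\cC_2}(-\cR))$ via the blow-up construction, realize the upper row of \eqref{sq-A} as $R\pi_{\cW\ast}$ applied to the normalization/Mayer--Vietoris short exact sequence
$0\to\cV|_{\cC_2}(-\cR)\to\cV\to\cV|_{\cC_1}\to 0$
pulled back to $\cC_\cW$, and then deduce commutativity of the two squares from the compatibility of the evaluation diagrams. The identification $(\ev')\sta\TT_{\cV_{2/1}/\cC_{2,\cW_1}}\cong \ev\sta\bigl(\cV|_{\cC_2}(-\cR)\bigr)$, which you rightly flag as the main point, is exactly what the paper invokes as ``canonical \dots due to the blowing-up construction.'' The paper spells out the commutativity of the two squares more explicitly (via the fiber diagrams \eqref{aux1} and the $\ev_{2/1}$ square, feeding into \eqref{auxdia} and then pushing forward), but this is the same idea you express via ``functoriality of the direct image cone construction.''

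There is one place where you are slightly too quick, and it concerns the ``Further\dots'' clause. You assert that once both rows are distinguished triangles and both visible squares commute, the base-changed long exact sequences automatically form a commutative ladder. That is not automatic: a priori, having the two displayed squares commute does not force the squares involving the connecting homomorphisms $\delta: h^0(\tau\sta\EE_{\cW_1/\ccX}|_{\tau(\xi)})\to h^1(\EE_{\cW/\cW_1}|_\xi)$ (equivalently, the ``$+1$'' square) to commute, since the three vertical arrows are prescribed in advance rather than chosen to complete a morphism of triangles. The paper treats this as a genuinely separate assertion and verifies the offending square by a \v{C}ech cohomology computation of the obstruction class assignment. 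To make your argument complete, you should either (a) observe that your construction realizes the entire upper row as $R\pi_{\cW\ast}$ of a short exact sequence of sheaves and the vertical arrows as $R\pi_{\cW\ast}$ of a morphism of such short exact sequences (post-composed with adjunction units), so that compatibility with connecting maps is inherited, or (b) carry out the explicit \v{C}ech check the paper alludes to. As written, this step is asserted rather than proved.
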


\begin{proof}
We denote by $\ev_1: \cC_{1,\cW}\to\cV_1$ and
$\ev:\cC_{\cW}\to\cV$ the obvious evaluation maps.
We have the following obvious fiber diagram
$$
\begin{CD}
\cC_{1,\cW}@>\ev_1>>\cV_1@>>>\cC_1\\
@VV{j}V@VVV@VVV\\
\cC_\cW@>\ev>>\cV@>>>\cC, \\
 \end{CD}
$$
where the vertical arrows are closed embeddings.
This implies that the square
\beq\label{aux1}
\begin{CD}
\TT_{\cC_{1,\cW}/\cC_1}@>d(\ev_1)>>\ev_1\sta\cV_1\\
@VV{u_1}V@VV{u_2}V\\
j\sta\TT_{\cC_\cW/\cC}@>j\sta d(\ev)>>j\sta \ev\sta\cV
\end{CD}
\eeq
is commutative.
Since  $\cC_{1,\cW}\sub \cC_\cW$ is fiber product of $\cC_1\sub\cC$ with $\cW\to\ccX$,
that $\cC$ and $\cC_1$ are flat over $\ccX$ implies that $\TT_{\cC_{1,\cW}/\cC_1}$
and $\TT_{\cC_\cW/\cC}$ are pullbacks of $\TT_{\cW/\ccX}$; thus $u_1$ is an isomorphism. Similarly,
$u_2$ is an isomorphism. This implies that the following square is commutative
$$
\begin{CD}
 \TT_{\cC_\cW/\cC}@>d(\ev)>>\ev\sta \TT_{\cV/\cC}= \ev\sta \cV\\
 @VV{u_1^{-1}\circ j\sta}V@VV{u_2^{-1}\circ j\sta }V\\
j\lsta \TT_{\cC_{1,\cW}/\cC_1}@>d(\ev_1)>>  j\lsta \ev_1\sta \TT_{\cV_1/\cC_1} = j\lsta \ev_1\sta\cV_1.
\end{CD}
$$

We let Let $\zeta:\cV_{2/1}\to \cV_{2,\cW_1}\to \cV_2\to\cV$ be the composite of the obvious morphisms.
Then we have the commutative square
$$\begin{CD}
 \cC_{2,\cW}\defeq \cC_2\times_{\ccX}\cW@>ev_{2/1}>>\cV_{2/1}\\
 @VVV@VV{\zeta}V\\
 \cC_\cW@>ev>>\cV.\\
 \end{CD}
$$
Here $\ev_{2/1}$ is defined using the universal section of $\cW_{2/1}$($=\cW$).

The above two squares induce the following two commutative squares of objects in $D^b(\sO_{\cC_\cW})$,
(letting $\pi_2:\cC_{2,\cW}\to\cW$ be the projection, letting
$\jmath_1: \cC_{1,\cW_1}\times_{\cW_1}\cW\to\cC_\cW$ and $\jmath_2: \cC_{2,\cW}\to\cC_\cW$ be the obvious
inclusions,)
\beq\label{auxdia}\begin{CD}
\jmath_{2\ast} ev_{2/1}\sta \TT_{\cV_{2/1}/\cC_{2,\cW_1}} @>>> ev\sta \TT_{\cV/\cC}@>>>
\jmath_{1\ast}  ev_1\sta \TT_{\cV_1/\cC_1} \\
  @AAA@AAA@AAA\\
\jmath_{2\ast} \pi_{2}\sta \TT_{\cW/\cW_1}=\TT_{\cC_{2,\cW}/\cC_{2,\cW_1}}@>>> \TT_{\cC_\cW/\cC} @>>> \jmath_{1\ast} \ti\tau\sta \TT_{\cC_{1,\cW_1}/\cC_1},
\end{CD}
\eeq
where $\ti\tau: \cC_{1,\cW}\to\cC_{1,\cW_1}$ is the projection lifting $\tau:\cW\to\cW_1$. (cf. \eqref{tau11}).


 Taking $\pi:\cC_\cW\to\cW$ 
and $\pi_1:\cC_{1,\cW_1}\to\cW_1$ to be the
respective projections, let $\bar {ev}_1:\cC_{1,\cW_1}\to\cV_1$
be the evaluation using the universal section of $\cW_1$,
applying $R\pi_{\ast}$ to \eqref{auxdia}, we
obtain commutative diagrams
 \beq\label{auxdia1}\begin{CD}
  R\pi_{2\ast} ev_{2/1}\sta \TT_{\cV_{2/1}/\cC_{2,\cW_1}} @>>> R\pi_{\ast}ev\sta \TT_{\cV/\cC}@>>>
\tau\sta  R\pi_{1\ast} \bar{ev}_1\sta \TT_{\cV_1/\cC_1} \\
  @AAA@AAA@AAA\\
  R\pi_{2\ast}  \pi_{2}\sta \TT_{\cW/\cW_1}
   @>>>R\pi_{\ast} \TT_{\cC_\cW/\cC} @>>>\tau\sta R\pi_{1\ast} \TT_{\cC_{1,\cW_1}/\cC_1}\\
   @AAA@AAA@AAA\\
  \TT_{\cW/{\cW_1}}@>>>\TT_{\cW/\ccX}@>>>\tau\sta\TT_{{\cW_1}/\ccX},
   \end{CD}
\eeq
Note that the first row is identical to the first row of \eqref{sq-A}, and the composited three vertical arrows in
\eqref{auxdia1} are $\phi\dual_{\cW/\cW_1}$, $\phi\dual_{\cW/\ccX}$ and $\tau\sta\phi\dual_{\cW_1/\ccX}$ in
\eqref{sq-A}.

On the other hand, we have canonical $ev_{2/1}\sta\TT_{\cV_{2/1}/\cC_{2,\cW_1}}\cong
ev\sta (\cV|_{\cC_2}(-\cR))$ (due to the blowing-up construction), the first row of \eqref{auxdia} equals to
\beq\label{exact1}
0\lra   ev\sta (\cV|_{\cC_2}(-\cR)) \lra ev\sta \cV\lra ev\sta (\cV|_{\cC_1})\lra 0,
\eeq
thus is a distinguished triangle. Therefore, the first row of \eqref{auxdia1}, which is the first row of \eqref{sq-A},
is a distinguished triangle.
Finally, the further part of the proposition is implied by commutativity of the following diagram
$$
 \begin{CD}
h^0(\EE_{{\cW_1}/\ccX}|_{\tau(\xi)}) @>>>h^1(\EE_{\cW/{\cW_1}}|_\xi)  \\
@AA{h^1(\phi\dual_{{\cW_1}/\ccX}|_\xi)}A@AA{h^1(\phi\dual_{\cW/{\cW_1}}|_\xi)}A\\
h^0(\TT_{{\cW_1}/\ccX}|_{\tau(\xi)}) @>>>h^1(\TT_{\cW/{\cW_1}}|_\xi) \end{CD}
$$
which can be checked by $\check{\text{C}}$ech cohomology description of the obstruction class assignment.
We leave it to the reader.
 \end{proof}

\end{document}